\documentclass[a4paper,12pt]{article}
\usepackage[dvipsnames,table,svgnames]{xcolor}
\usepackage[utf8]{inputenc}
\usepackage{verbatim}
\usepackage[margin=3cm]{geometry}
\usepackage[algo2e,ruled,vlined]{algorithm2e}
\usepackage{algorithm}
\usepackage{here}
\usepackage{setspace}
\usepackage[normalem]{ulem}
\usepackage{algpseudocode}
\usepackage[T1]{fontenc}
\usepackage{dsfont}
\usepackage{url}
\usepackage[inline]{enumitem}
\usepackage{microtype}
\usepackage[affil-sl]{authblk}
\usepackage{graphicx}
\usepackage{nicematrix}
\usepackage{capt-of}

\usepackage{tikz}
\usetikzlibrary{tikzmark}
\usetikzlibrary{positioning}
\usetikzlibrary{graphs}
\usetikzlibrary{graphs.standard}
\usetikzlibrary{decorations.pathmorphing}
\usetikzlibrary{decorations.text}
\usetikzlibrary{patterns,shadings}
\usetikzlibrary{matrix}
\pgfdeclarelayer{bg}    
\pgfsetlayers{bg,main}  

\usepackage{amsmath}
\usepackage{amssymb}
\usepackage{amsthm}
\usepackage{mathtools}
\usepackage{nicefrac}
\usepackage{amsfonts,latexsym,graphics}
\usepackage{color}
\usepackage{colortbl}
\usepackage[hidelinks]{hyperref}
\usepackage{multirow}
\usepackage[english]{babel}
\usepackage{epsfig}
\usepackage{bm}
\usepackage{makeidx}
\usepackage[labelformat=simple]{subcaption}
\usepackage{adjustbox}
\usepackage{diagbox}

\usepackage{xspace}         

\makeatletter
\let\c@figure\c@table
\let\ftype@figure\ftype@table
\let\ext@figure\ext@table

\makeatother

\title{A unified framework for existing and new constructions of Neumaier graphs}

\author{\qquad Aida Abiad\thanks{\texttt{a.abiad.monge@tue.nl}, Department of Mathematics and Computer Science, Eindhoven University of Technology, The Netherlands\newline Department of Mathematics and Data Science of Vrije Universiteit Brussel, Belgium}
\quad Wouter Castryck\thanks{\texttt{wouter.castryck@esat.kuleuven.be}, imec-COSIC, Department of Electrical Engineering, KU Leuven, Belgium\\Department of Mathematics: Algebra and Geometry, Ghent University, Flanders, Belgium}
\quad Maarten De Boeck\thanks{\texttt{m.deboeck@slo.nl},  Stichting Leerplanontwikkeling, Netherlands\\ Department of Mathematics, Computer Science and Statistics, Ghent University, Flanders, Belgium} 
\newline Antonina P. Khramova\thanks{\texttt{a.khramova@tue.nl},  Department of Mathematics and Computer Science, Eindhoven University of Technology, The Netherlands} \quad  Thijs van Veluw\thanks{\texttt{thijs.vanveluw@ugent.be},
Department of Mathematics, Computer Science and Statistics, Ghent University, Belgium
\newline
Department of Mathematics and Computer Science, Eindhoven University of Technology, The Netherlands}}

\date{}

\newtheorem{thm}{Theorem}[section]
\newtheorem{prp}[thm]{Proposition}
\newtheorem{lema}[thm]{Lemma}

\theoremstyle{definition}

\newtheorem{exa}[thm]{Example}

\def\a{\alpha}
\def\e{\varepsilon}
\def\N{\mathbb{N}}
\def\Z{\mathbb{Z}}
\def\F{\mathbb{F}}

\def\Fq{\mathbb{F}_q}
\def\Fp{\mathbb{F}_p}
\def\Eis{\mathbb{Z}[\zeta_6]}

\DeclareMathOperator{\Sym}{Sym}
\DeclareMathOperator{\ev}{ev}
\DeclareMathOperator{\im}{im}
\DeclareMathOperator{\Tr}{Tr}
\DeclareMathOperator{\Char}{char}
\DeclareMathOperator{\PG}{PG}
\DeclareMathOperator{\AG}{AG}

\usepackage{etoolbox}

\apptocmd{\thebibliography}{%
  \setlength{\itemsep}{0pt}
  \setlength{\parsep}{0pt}
  \setlength{\parskip}{0pt}
}{}{}

\begin{document}

\date{}

\maketitle

\begin{abstract}
A Neumaier graph is a non-complete edge-regular graph containing a regular clique; it is called strictly Neumaier if it is not strongly regular. In this paper we present a construction using finite rings that unifies several known results and yields three new families, each containing infinitely many strictly Neumaier graphs. We characterize when the resulting graphs are strongly regular. The family obtained from norm functions of finite field extensions includes the collinearity graphs of generalized quadrangles of \(*\)-Tits type arising from the classical hyperoval. We further show that, when the cyclotomic numbers involved have order at most seven, four types of rings suffice to obtain, up to isomorphism, all strictly Neumaier graphs arising from our construction. As a consequence of our results, each of these four types has now been used to construct Neumaier graphs.
\end{abstract}

\section{Introduction}

A non-empty regular graph is called \emph{edge-regular} if any two adjacent vertices have the same number of common neighbors. An edge-regular graph is \emph{strongly regular} if, additionally, any two non-adjacent vertices also have the same number of common neighbors. A \emph{regular clique} in a regular graph is a clique (i.e.\ a set of vertices that are pairwise adjacent) having the property that every vertex outside of it is adjacent to the same positive number of vertices of the clique, denoted by $e$ (referred to as \emph{nexus}). A \emph{Neumaier graph} is a non-complete edge-regular graph containing a regular clique. A Neumaier graph that is not a strongly regular graph is called a \emph{strictly Neumaier graph}; equivalently, a strictly Neumaier graph is a Neumaier graph whose complement is not edge-regular.

In his 1981 paper \cite{Neumaier}, Neumaier studied regular cliques in edge-regular graphs, and he showed that all vertex-transitive, edge-transitive graphs with a regular clique are strongly regular. He subsequently raised the question whether all edge-regular graphs with a regular clique are strongly regular, i.e.\ whether there are strictly Neumaier graphs. This question was answered in 2018 by Greaves and Koolen \cite{GK2018} by providing infinitely many strictly Neumaier graphs, and in recent years this topic has attracted a considerable amount of attention in the literature. Neumaier graphs have witnessed several  applications in algebraic  combinatorics. Strictly Neumaier graphs provide counterexamples
that clarify the limits of spectral results for strongly regular
graphs~\cite{ADDK2022}. They also provide non-isomorphic graphs with the same
spectrum, illustrating the limitations of spectral methods for graph characterizations~\cite{DEGK2026+,EGKM2023}. Their connections with block designs help with the classification and nonexistence proofs of Neumaier graphs~\cite{ADZ2024},
while further constructions, like the ones mentioned above, provide examples for studying algebraic
structures associated with graphs and related to graph
isomorphism~\cite{GT2026+}.

Numerous constructions of strictly Neumaier graphs have been proposed using cyclotomy \cite{GT2026,GK2018}, switching methods \cite{ADZ2024,EGP2019,EGKM2023}, Cayley graphs \cite{EGRZ2026+,Jazaeri}, and other methods \cite{DEGK2026+,GT2026+}. Furthermore, a prolific method of constructing Neumaier graphs with a partition of their vertex sets into regular cliques (the so-called \emph{spread}) has been proposed by Evans, Goryainov, Konstantinova, and Mednykh \cite{EGKM2023} as a generalization of the results from \cite{GK2019}, which has been used in combination with Jacobi sums to construct infinitely many strictly Neumaier graphs by Abiad, Castryck, De Boeck, Koolen, and Zeijlemaker in \cite{ACDKZ2023}. Moreover, necessary conditions or non-existence results for this graph class have also been studied, see \cite{ACDKZ2023,ADZ2024,ADDK2022,EGP2019,Jazaeri,YK2025}. For an overview of strictly Neumaier graph parameter sets of small order, see \cite[Table 1]{ADZ2024}\footnote{We should note that in this table, the parameter set $(58,30,11;2,4)$ is missing, also see the older \cite[Table 1]{ACDKZ2023}.}.

In this paper, we use finite rings to construct edge-regular graphs with a partition of their vertex sets into regular cocliques of nexus 1 as required in the construction of \cite{EGKM2023}. This more general algebraic framework to construct Neumaier graphs allows us to unify and recover the very first construction \cite{GK2018}, the construction from \cite{ACDKZ2023}, and a special case of the construction from \cite{GT2026}. In addition to these existing constructions, we also use the proposed framework to obtain three new families of Neumaier graphs. These are based on the norm functions of extensions of finite fields, the ring of dual numbers over $\F_4$, and quotient rings of the Eisenstein integers, respectively. Furthermore, we prove that each of these three new families gives infinitely many strictly Neumaier graphs. For this we use several number-theoretic results ranging from the Ax-Katz theorem, formulas for the sextic cyclotomic numbers, to Hensel's lemma. The first of these three new families includes the collinearity graphs of generalized quadrangles of $\ast$-Tits type (cf.\ \cite[Section 3.7.2]{VMaldeghem}).

We furthermore show that, under an upper bound of 7 on the size of the image of the defining group homomorphism (implying an upper bound of 7 of the order of the cyclotomic numbers involved), only four types of finite rings suffice to obtain, up to graph isomorphism, all strictly Neumaier graphs produced by our construction. Neumaier graphs arising from the first two types of rings are provided by \cite{ACDKZ2023,GK2018} and by our first new family of Neumaier graphs. The other two types of rings precisely correspond to our second and third new infinite families of Neumaier graphs, implying that our contributions are in essence exhaustive under the abovementioned restrictions.

This paper is organized as follows. In Section~\ref{sec:preliminaries}, we set out the necessary preliminaries that are needed for our results and proofs. In Section~\ref{sec:construction}, we present our main result (Theorem~\ref{thm:general}), and show how it provides a unifying framework for the aforementioned existing constructions of Neumaier graphs. In Section~\ref{sec:norm}, we present the first of the new families using the norm function of finite field extensions. We also prove its infinitude. In Section~\ref{sec:even more constructions}, we investigate the types of rings that are suitable to our framework of constructing Neumaier graphs, and provide two new infinite families of strictly Neumaier graphs accordingly.

\section{Preliminaries}\label{sec:preliminaries}
In this section we provide the graph and ring theoretic notation and background used throughout the paper. Moreover, we recall the machinery developed by \cite{EvansThesis,EGKM2023} to construct nexus 1 Neumaier graphs with a spread. Lastly, we recall some definitions and results on cyclotomic numbers, Eisenstein integers, and conics over finite fields that are needed in our results and proofs.
\subsection{Basic graph and ring theory}
We consider simple, undirected, loopless graphs $\Gamma$, consisting of a vertex set $V(\Gamma)$ and an edge set $E(\Gamma)$ such that each $e\in E(\Gamma)$ satisfies $e\subseteq V(\Gamma)$ and $|e|=2$. If for two vertices $x,y\in V(\Gamma)$ the set $\{x,y\} \in E(\Gamma)$, then we say that $x$ and $y$ are \emph{adjacent} or that $x$ and $y$ are \emph{neighbors}, and write $x\sim_\Gamma y$ (or $x\sim y$ if the graph is clear from context).

We denote the parameters of an edge-regular graph by $(v,k,\lambda)$, where $v$ is the number of vertices, $k$ is the valency of the graph, and every pair of adjacent vertices has exactly $\lambda$ common neighbors. For a strongly regular graph, we use the tuple $(v,k,\lambda,\mu)$, where, additionally, every pair of distinct non-adjacent vertices has exactly $\mu$ common neighbors. For a Neumaier graph, we use the tuple $(v,k,\lambda;e,s)$, where $(v,k,\lambda)$ are its edge-regular parameters, $e$ denotes the nexus, and $s$ is the size of the regular clique.

Similar to regular cliques, a \emph{regular coclique of nexus} $e$ in a graph is a coclique (i.e.\ a set of vertices pairwise non-adjacent) such that every vertex outside of the coclique has exactly $e$ neighbors in the coclique.

All rings $R$ are assumed to be associative and to have a multiplicative identity $1_R$ that is distinct from $0_R$ (where we leave off the subscripts if the ring is known from context). Accordingly, ring homomorphisms are assumed to map the multiplicative identity of the domain ring to the multiplicative identity of the codomain ring. For a ring $R$, we write $(R,+)$ for the underlying abelian group under addition, and $R^*$ for the multiplicative group of units. For rings $R$ and $S$ and for groups $G$ and $H$ we write their direct products as $R\times S$ and $G\times H$. Note that $(R\times S, +)\cong (R, +) \times (S,+)$ and that $(R\times S)^*\cong R^* \times S^*$.

For a finite ring $R$, the \emph{characteristic} of $R$, denoted $\Char(R)$, is the smallest positive integer $c$ such that $\underbrace{1+\dots+1}_{c \text{ times}}=0$.

For a positive integer $n$, we use $\Z_n:=\Z/n\Z$, and for integers $a$ and $b$, we write $a \equiv_n b$ if $a-b$ is divisible by $n$. For a prime power $q$ we write $\Fq$ for the finite field with $q$ elements; if $q=p^e$ for a prime number $p$ and positive integer $e$, then $\Char(\Fq)=p$. For a commutative ring $R$, we write $R[x]$ for the ring of polynomials in the variable $x$, and furthermore for elements $r_1,\dots,r_n \in R$, the \emph{ideal of $R$ generated by $r_1,\dots,r_n$}, denoted $(r_1,\dots,r_n)$ refers to the set $\{r_1s_1+r_2s_2+\dots+r_ks_k : s_1,\dots,s_k \in R\}$.

\subsection{Cayley graphs}

Let $G$ be an abelian group with operation $+$ and identity element $0$, and let $S$ be a subset of $G$ not containing $0$ that is inverse-closed (i.e.\ for all $s\in S$ also $-s\in S$). Then the \emph{Cayley graph on $G$ with connecting set $S$} is defined as the graph $\Gamma(G,S)$ with the elements of $G$ as vertices, such that $g\sim h$ whenever $g-h\in S$. Note that since $S$ does not contain $0$ and is inverse-closed, $\Gamma(G,S)$ is a loopless undirected graph.

Cayley graphs are highly symmetric. For example, for every $g\in G$ the mapping $\a_g: h \mapsto g+h$ is a graph automorphism, making $\Gamma(G,S)$ vertex-transitive (i.e.\ for every pair of vertices $u,v$ there is a graph automorphism sending $u$ to $v$). Furthermore, if $\varphi$ is a group automorphism of $G$ that fixes $S$ set-wise, then $\varphi$ induces a graph automorphism of $\Gamma(G,S)$. In particular, if $G$ is the additive group of a ring $R$, and $r\in R^*$ such that $r\cdot S=S$, then $\lambda_r : g\mapsto rg$ induces a graph automorphism of $\Gamma(G,S)$. This implies the following lemma, which we use in our arguments.

\begin{lema}\label{lema:edge-transitive}
    Let $R$ be a ring, and let $S$ be a subgroup of $R^*$ containing $-1_R$. Then $\Gamma((R,+),S)$ is edge-transitive, i.e.\ for every pair of edges $\{u,v\},\{u',v'\}$ there is a graph automorphism $\varphi$ with $\{\varphi(u),\varphi(v)\}=\{u',v'\}$, namely $\varphi=\a_{u'}\circ \lambda_{(v'-u')(v-u)^{-1}} \circ \a_{-u}$.
\end{lema}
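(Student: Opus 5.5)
We simply verify that the given map $\varphi=\a_{u'}\circ \lambda_{r} \circ \a_{-u}$, with $r:=(v'-u')(v-u)^{-1}$, is well defined, is a graph automorphism of $\Gamma((R,+),S)$, and maps the edge $\{u,v\}$ to $\{u',v'\}$.

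\textbf{Well-definedness.} Since $S\subseteq R^*$ does not contain $0$ (as $1\neq 0$) and is inverse-closed ($-s=(-1)s\in S$ because $S$ is a group containing $-1$), the Cayley graph is a simple undirected graph. Because $\{u,v\}$ and $\{u',v'\}$ are edges, we have $v-u\in S$ and $v'-u'\in S$. Hence $v-u$ is a unit, so $(v-u)^{-1}$ exists and lies in $S$, and therefore $r\in S$ because $S$ is a multiplicative subgroup. In particular $r\in R^*$ and $rS=S$.

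\textbf{Automorphism.} The translations $\a_{-u}$ and $\a_{u'}$ are graph automorphisms, since $(g+a)-(h+a)=g-h$. For $\lambda_r$, left multiplication by $r$ is an additive bijection of $R$ (with inverse $\lambda_{r^{-1}}$), and $rg-rh=r(g-h)\in S$ if and only if $g-h\in S$, because $rS=S$. Note that we use left multiplication throughout and that $S$ is closed under multiplication, so commutativity of $R$ is not needed. Composing these three maps gives a graph automorphism $\varphi$.

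\textbf{Image of the edge.} Computing directly, $\varphi(u)=u'+r\cdot 0=u'$ and
\[
\varphi(v)=u'+(v'-u')(v-u)^{-1}(v-u)=v'.
\]
The only points requiring care are that $r$ lies in $S$ and that the inverse multiplies on the correct side. Since $(v-u)^{-1}(v-u)=1$, the definition of $r$ with the inverse placed on the right makes this cancellation work in the noncommutative setting as well.
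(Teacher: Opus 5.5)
Your proof is correct and takes the same approach as the paper. The paper derives the lemma directly from its preceding remarks that translations $\alpha_g$, and left multiplications $\lambda_r$ with $r\in R^*$ and $rS=S$, are graph automorphisms. Your checks that $r=(v'-u')(v-u)^{-1}\in S$ and that $\varphi(u)=u'$, $\varphi(v)=v'$ are exactly the verification the paper leaves implicit.
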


\subsection{Nexus 1 Neumaier graphs with a spread}
A \emph{spread} in a Neumaier graph $\Gamma$ is a partition of the vertex set of $\Gamma$ into regular cliques. In \cite{EvansThesis,EGKM2023}, Evans, Goryainov, Konstantinova, and Mednykh developed a method to obtain nexus 1 Neumaier graphs with a spread. This construction is a generalization of the construction by Greaves and Koolen from \cite{GK2019}.

For $i=1,\dots, t$, let $\Gamma_i$ be an edge-regular graph with parameters $(v,k,\lambda)$, such that $C_{i,1},\dots,C_{i,k+1}$ is a partition of its vertex set into 1-regular cocliques (i.e.\ cocliques $C$ such that every vertex outside of $C$ is adjacent to exactly one vertex of $C$). Let $\pi_1=(),\pi_2,\dots,\pi_t$ be $t$ permutations in $\Sym_{k+1}$. Then the graph $F_{(\pi_2,\dots,\pi_t)}(\Gamma_1,\dots,\Gamma_t)$ is defined as follows. Start with the disjoint union of the graphs $\Gamma_1,\dots,\Gamma_t$, and draw new edges between any vertex $u$ from $C_{i,\pi_i(j)}$ and any vertex from $C_{i',\pi_{i'}(j)}$ that is distinct from $u$, for every $i,i'\in \{1,\dots, t\}$ (not necessarily distinct) and every $j\in \{1,\dots, k+1\}$.

\begin{thm}[\cite{EvansThesis,EGKM2023}]\label{thm:EvansMachine} Let $\Gamma_i$, $C_{i,j}$ and $\pi_i$ be as above. If $t=\frac{(\lambda+2)(k+1)}{v}$, then $F_{(\pi_2,\dots,\pi_t)}(\Gamma_1,\dots,\Gamma_t)$ is a Neumaier graph with parameters $(vt,k+\lambda+1,\lambda;1,\lambda+2)$.
\end{thm}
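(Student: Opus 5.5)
We verify the defining properties of the graph $F:=F_{(\pi_2,\dots,\pi_t)}(\Gamma_1,\dots,\Gamma_t)$ directly by counting. For $j\in\{1,\dots,k+1\}$ put $S_j:=\bigcup_{i=1}^t C_{i,\pi_i(j)}$. By construction each $S_j$ is a clique of $F$, and the $S_j$ partition $V(F)$. The plan is to compute the sizes of the $S_j$, then the valency, then the nexus of each $S_j$, and finally $\lambda$ for both kinds of edges.

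\textbf{Step 1: sizes.} A $1$-regular coclique $C$ in a $k$-regular graph on $v$ vertices satisfies $v-|C|=k|C|$, counting edges between $C$ and its complement. Hence every $C_{i,j}$ has size $v/(k+1)$. Therefore $|S_j|=tv/(k+1)=\lambda+2$, and $F$ has $vt$ vertices.

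\textbf{Step 2: valency and regular cliques.} Take $u\in C_{i,\pi_i(j)}$. Its neighbours are its $k$ neighbours in $\Gamma_i$, none of which lie in $C_{i,\pi_i(j)}$ since that set is a coclique, together with the other $\lambda+1$ vertices of $S_j$. These two sets are disjoint, because the only vertices of $\Gamma_i$ in $S_j$ are those of $C_{i,\pi_i(j)}$. So $F$ is regular of valency $k+\lambda+1$.

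Now take $u\notin S_j$, say $u\in S_{j'}\cap V(\Gamma_i)$ with $j'\neq j$. Its neighbours in $S_j$ are exactly its $\Gamma_i$-neighbours in $C_{i,\pi_i(j)}$, and there is exactly one. So each $S_j$ is a regular clique of nexus $1$. The graph $F$ is non-complete as long as $k+1\ge 2$.

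\textbf{Step 3: edge-regularity.} There are two kinds of edges.

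\begin{itemize}
\item \emph{Edges inside a clique.} Let $u,w\in S_j$ be distinct. Common neighbours inside $S_j$ number $\lambda$. Any other common neighbour $x$ lies in some $S_{j'}$ with $j'\neq j$. Then $x$ has exactly one neighbour in $S_j$, so it cannot be adjacent to both $u$ and $w$. Hence the count is exactly $\lambda$.
\item \emph{Edges of some $\Gamma_i$.} Let $u\sim_{\Gamma_i} w$ with $u\in S_j$, $w\in S_{j'}$ and $j\neq j'$. A common neighbour in $S_j$ must be the unique neighbour of $w$ in $S_j$, which is $u$ itself, so there are none; likewise there are none in $S_{j'}$. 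A common neighbour $x$ in a third clique $S_{j''}$ is adjacent to $u$ and $w$ only through $\Gamma$-edges, so $x\in V(\Gamma_i)$ and $x$ is a common $\Gamma_i$-neighbour. Conversely, every common $\Gamma_i$-neighbour lies outside $S_j$ and $S_{j'}$, since it lies in neither coclique. Thus the count is $\lambda$, by edge-regularity of $\Gamma_i$.
\end{itemize}

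The only point needing care is this second case. One has to check that adjacency between different cliques $S_j$ arises only from edges of a single $\Gamma_i$. This holds because the new edges all lie within some $S_j$ and the $\Gamma_i$ are disjoint. It is also exactly where $\lambda$ of $F$ is forced to equal $\lambda$ of the $\Gamma_i$. Combining Steps 1–3 yields the parameters $(vt,k+\lambda+1,\lambda;1,\lambda+2)$.
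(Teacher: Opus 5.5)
Your proof is correct and complete. The paper does not prove this result itself (it cites \cite{EvansThesis,EGKM2023}), and your direct verification is the natural argument: double counting gives $|C_{i,j}|=v/(k+1)$ and hence $|S_j|=\lambda+2$, each $S_j$ has nexus $1$, and you check edge-regularity separately for intra-clique edges and for $\Gamma_i$-edges. The one point you leave implicit, $k\ge 1$ for non-completeness, holds because the paper defines edge-regular graphs to be non-empty.
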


The following theorem gives a sufficient condition to ensure that the Neumaier graph constructed using the method from Theorem~\ref{thm:EvansMachine} is a strictly Neumaier graph. Note that we include the case where the $\Gamma_i$'s are disconnected; if each of the $\Gamma_i$'s is a disjoint union of complete graphs, then $F_{(\pi_2,\dots,\pi_t)}(\Gamma_1,\dots,\Gamma_t)$ is strongly regular.

\begin{thm}[cf.\ \cite{ACDKZ2023,EGKM2023}\label{thm:EMstrict}] Let $\Gamma_i$, $C_{i,j}$ and $\pi_i$ be as above, and suppose $t=\frac{(\lambda+2)(k+1)}{v}$. Suppose that the $\Gamma_i$'s are not disjoint unions of complete graphs. If $t\ge 2$, then $F_{(\pi_2,\dots,\pi_t)}(\Gamma_1,\dots,\Gamma_t)$ is a strictly Neumaier graph.
\end{thm}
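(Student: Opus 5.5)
We have to show the resulting graph $\Gamma=F_{(\pi_2,\dots,\pi_t)}(\Gamma_1,\dots,\Gamma_t)$ is not strongly regular. Theorem~\ref{thm:EvansMachine} already gives that $\Gamma$ is Neumaier, so it suffices to find two pairs of distinct non-adjacent vertices with different numbers of common neighbours.

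\textbf{Local structure.} For $j\in\{1,\dots,k+1\}$ put $K_j=\bigcup_i C_{i,\pi_i(j)}$. By construction each $K_j$ is a clique of $\Gamma$, and $|K_j|=\lambda+2$. Two vertices of $\Gamma$ are adjacent exactly when either they lie in the same $\Gamma_i$ and are adjacent there, or they lie in the same $K_j$. These two cases are disjoint, because each $C_{i,j}$ is a coclique of $\Gamma_i$.

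Fix a vertex $u\in C_{i,\pi_i(j)}$. Its neighbours are its $k$ neighbours in $\Gamma_i$ together with the $\lambda+1$ other vertices of $K_j$. Since $C_{i,m}$ is a $1$-regular coclique, $u$ has exactly one $\Gamma_i$-neighbour in each $C_{i,m}$ with $m\ne\pi_i(j)$. So $u$ meets every clique $K_{j'}$ with $j'\ne j$ in exactly one vertex.

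\textbf{Counting common neighbours.} Take distinct non-adjacent $u,w$. The set $N(u)\cap N(w)$ splits into three parts:
\begin{enumerate*}[label=(\roman*)]
\item common $\Gamma_i$-neighbours, possible only when $u,w$ lie in the same $\Gamma_i$;
\item vertices of $K_{j_u}$ adjacent to $w$, and vertices of $K_{j_w}$ adjacent to $u$;
\item vertices adjacent to one of them via $\Gamma_i$ and to the other via a clique.
\end{enumerate*}
Because each vertex meets each foreign clique $K_{j'}$ in exactly one vertex, parts (ii) and (iii) are quite rigid.

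\textbf{Comparing two kinds of pairs.} Since $t\ge 2$, we can take $u\in\Gamma_1$ and $w\in\Gamma_2$ with $j_u\neq j_w$; these are non-adjacent. Every common neighbour of $u,w$ then lies in $K_{j_u}\cup K_{j_w}$. Indeed, a common neighbour outside these cliques would have to be a $\Gamma$-neighbour of both through their respective $\Gamma_i$'s, impossible across different components. Hence $\mu(u,w)$ equals the number of vertices in $K_{j_u}$ adjacent to $w$ plus the number in $K_{j_w}$ adjacent to $u$, minus overlaps. By the one-vertex property this is exactly $2$, provided the vertex of $K_{j_u}$ adjacent to $w$ differs from the one of $K_{j_w}$ adjacent to $u$, which holds because $K_{j_u}\cap K_{j_w}=\emptyset$.

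Next, use that $\Gamma_1$ is not a disjoint union of complete graphs. Then $\Gamma_1$ has an induced path $u\sim x\sim w$ with $u\not\sim w$; this is the step I expect to be the main obstacle. The pair $u,w$ is non-adjacent in $\Gamma$ only if $j_u\ne j_w$, that is, $u$ and $w$ lie in different cocliques. Since $u,w\in\Gamma_1$ are non-adjacent and $C_{1,\cdot}$ partitions $V(\Gamma_1)$, I would handle the alternative separately: if $u,w$ lie in the same coclique $C_{1,m}$, they are adjacent in $\Gamma$ via $K$, so that pair is useless.

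I therefore need non-adjacent vertices of $\Gamma_1$ in different cocliques that have a common $\Gamma_1$-neighbour. For such $u,w$, the two clique-contributions give $2$ common neighbours as before, possibly coinciding with $\Gamma_1$-neighbours. One should check that the vertex of $K_{j_w}$ adjacent to $u$ lies in $\Gamma_1$, since $u$'s only neighbours outside $K_{j_u}$ are in $\Gamma_1$. So the count becomes $\mu(u,w)=|N_{\Gamma_1}(u)\cap N_{\Gamma_1}(w)|+(\text{clique contributions not already counted})$.

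If this fails to beat $2$ directly, I would instead argue via a global count. Assume strong regularity with $\mu=2$. The Neumaier parameters $(vt,k+\lambda+1,\lambda)$ then force $vt$ through the identity $k'(k'-\lambda-1)=\mu(vt-k'-1)$, with $k'=k+\lambda+1$. Check this against $t=\frac{(\lambda+2)(k+1)}{v}$: equality holds exactly when every non-adjacent pair behaves like the cross-component pairs. That in turn would force each $\Gamma_i$ to have no induced $P_3$, that is, to be a union of cliques, which is a contradiction.
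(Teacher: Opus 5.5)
Your cross-component computation is correct: for $u\in\Gamma_1$ and $w\in\Gamma_2$ in different cliques $K_{j_u}\neq K_{j_w}$ there are exactly $2$ common neighbours. The gap is that you never exhibit a non-adjacent pair with a different count. You leave two points open: (a) whether the ends of an induced path $u\sim x\sim w$ in $\Gamma_1$ lie in different cocliques, and (b) whether the two clique contributions can coincide with common $\Gamma_1$-neighbours. Without (b) you cannot even rule out $\mu(u,w)=2$.

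Both points follow from a fact you already wrote down. By $1$-regularity, a vertex of $\Gamma_1$ has exactly one $\Gamma_1$-neighbour in each coclique other than its own, and none in its own. Hence two distinct neighbours $u,w$ of $x$ lie in different cocliques, so $u\not\sim w$ in $F$. Next, the vertex $u'$ of $K_{j_w}$ adjacent to $u$ is the $\Gamma_1$-neighbour of $u$ in $C_{1,\pi_1(j_w)}$, which is $w$'s own coclique. So $u'\neq w$ and $u'$ is not a $\Gamma_1$-neighbour of $w$. The same holds symmetrically for the vertex $w'$ of $K_{j_u}$ adjacent to $w$. The three contributions are therefore disjoint, and $\mu(u,w)=2+|N_{\Gamma_1}(u)\cap N_{\Gamma_1}(w)|\ge 3$, which finishes the argument. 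The existence of the induced path is not an obstacle either: a graph without an induced path on three vertices is precisely a disjoint union of cliques.

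Your fallback, as stated, is circular. The step ``equality forces every non-adjacent pair to behave like the cross pairs, hence $\Gamma_i$ has no induced $P_3$'' relies on exactly the local computation you had not done. The count does work on its own once you carry it out. In $F$ a vertex has $vt-k'-1=(\lambda+2)(k+1)-(k+\lambda+2)=k(\lambda+1)$ non-neighbours, and $k'(k'-\lambda-1)=k(k+\lambda+1)$. So strong regularity with $\mu=2$, which the cross pairs force, gives $k(k-\lambda-1)=0$. Since $\Gamma_i$ is not edgeless, $k\ge 1$, so $k=\lambda+1$. Then adjacent vertices of $\Gamma_i$ have equal closed neighbourhoods, so $\Gamma_i$ is a disjoint union of copies of $K_{k+1}$, a contradiction. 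This parameter-counting route is a valid alternative that bypasses the within-component analysis entirely, but you need to write it out rather than assert it.
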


\subsection{Cyclotomic numbers}
Cyclotomic numbers were used in the first construction of Neumaier graphs by Greaves and Koolen \cite{GK2018}. In Lemma~\ref{lema:b cyclotomy}, we will use cyclotomic numbers in a similar way, which is why we recall their definition and some results here.

If $G$ is a cyclic multiplicative group generated by $g\in G$, then we write $\log_g$ for the group homomorphism from $G$ to $\Z_{|G|}$ defined by $\log_g(g^i)=i$. Now let $q$ be a prime power, write $q=mn+1$ for some positive integers $m$ and $n$, and let $\alpha$ be a primitive element of $\F_q$. Then for $i,j\in \Z$ consider the set
$$C_q^m(\a;i,j)=\{x\in \F_q^*\setminus \{1\}: \log_\alpha(x-1)\equiv_m i, \log_\alpha(x)\equiv_m j\}.$$
The \emph{cyclotomic number} $c_q^m(\alpha;i,j)$ is defined as the size of $C^m_q(\a;i,j)$. Note that the integers $i$ and $j$ can be taken modulo $m$. The cyclotomic numbers satisfy the following properties, which we need for our results.

\begin{prp}[{\cite{BerndtEvansWilliams, Storer}}]\label{prp:cyclo} Let $q$ be a prime power, let $m$ and $n$ be integers such that $q=mn+1$, and let $\alpha$ be a primitive element of $\Fq$. Then the following hold:
    \begin{enumerate}[label=(\arabic*)]
    \item for every $i,j\in \Z$ we have $c_q^m(\a;i,j)=c_q^m(\a;-i,j-i)$;
    \item for every $i,j\in \Z$ we have $c_q^m(\a;i,j)=c_q^m(\a; j+ \log_\a(-1),i+\log_\a(-1))$,
    where
    \[
\log_\a(-1)\equiv_m
\begin{cases}
    0 &$if $qn$ is even,$\\
    m/2 &$if $qn$ is odd;$
\end{cases}
\]
    \item for every $j\in \Z$ we have
    \[\sum_{i=0}^{m-1} c_q^m(\a;i,j)=
    \begin{cases}
        n &$if $j\not \equiv_m 0,\\
        n-1 &$if $j\equiv_m 0;
    \end{cases}
    \]
    \item for every $i\in \Z$, the cyclotomic number $c_q^m(\a;0,i)$ is odd if and only if $q$ is odd and $\log_\a(2) \equiv_m i$.
    \end{enumerate}
\end{prp}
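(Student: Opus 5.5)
This proposition collects standard facts about cyclotomic numbers, so I will prove each item directly from the definition of $C_q^m(\a;i,j)$ by exhibiting explicit bijections or a counting argument. Throughout I will use two facts. First, $\log_\a$ is a homomorphism, so $\log_\a(xy)=\log_\a x+\log_\a y$ and $\log_\a(x^{-1})=-\log_\a x$ modulo $q-1$, hence modulo $m$ since $m\mid q-1$. Second, the value of $\log_\a(-1)$ modulo $m$. If $q$ is even then $-1=1$, so $\log_\a(-1)=0$; in that case $qn$ is even. If $q$ is odd then $\log_\a(-1)=(q-1)/2=mn/2$. Modulo $m$ this is $0$ when $n$ is even and $m/2$ when $n$ is odd; note that $n$ odd forces $m$ even. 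So $\log_\a(-1)\equiv_m 0$ exactly when $qn$ is even, which is the formula in (2).

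For (1), I will use the map $x\mapsto x^{-1}$. If $x\in C(i,j)$, then $x^{-1}\neq 0,1$ and $\log_\a(x^{-1})\equiv -j$. Also $x^{-1}-1=(1-x)/x=-(x-1)/x$, so $\log_\a(x^{-1}-1)\equiv \log_\a(-1)+i-j$. This gives a bijection $C(i,j)\to C(\log_\a(-1)+i-j,\,-j)$, which is not the stated form. So I will instead compose with the involution $x\mapsto 1-x$. If $x\in C(i,j)$, then $1-x\in C(j+\log_\a(-1),\,i+\log_\a(-1))$, because $(1-x)-1=-x$ and $1-x=-(x-1)$. This proves (2).

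For (1), I will try $x\mapsto x/(x-1)$, which is an involution. We have $x/(x-1)-1=1/(x-1)$, whose log is $-i$, and $\log_\a\bigl(x/(x-1)\bigr)=j-i$. The image avoids $0$ and $1$. Hence $C(i,j)\to C(-i,j-i)$ is a bijection, which is exactly (1). The only real obstacle here is choosing the right Möbius map, and checking the domain conditions is routine.

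For (3), I fix $j$ and sum over $i$. This counts all $x\in\F_q^*\setminus\{1\}$ with $\log_\a x\equiv_m j$, i.e.\ the elements of the coset $\a^jH$, where $H$ is the subgroup of index $m$, of order $n$. Every element of this coset is nonzero. We remove $x=1$ only when the coset is $H$ itself, i.e.\ when $j\equiv_m 0$. This gives $n$ or $n-1$.

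For (4), I use the involution $\sigma:x\mapsto x/(x-1)$ from (1), which maps $C(0,i)$ to itself. Hence $c(0,i)$ has the parity of the number of fixed points of $\sigma$ in $C(0,i)$. A fixed point satisfies $x=x(x-1)$, so $x^2=2x$, and since $x\neq0$ we get $x=2$. For $q$ even, $x=2=0$ is excluded, so there are no fixed points. For $q$ odd, $x=2$ lies in $\F_q^*\setminus\{1\}$. Since $2-1=1$ has log $0$, we get $2\in C(0,i)$ if and only if $\log_\a 2\equiv_m i$. This gives (4).
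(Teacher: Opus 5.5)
Your proof is correct. The paper does not prove this proposition; it simply cites Berndt--Evans--Williams and Storer, so your argument is a self-contained replacement for that citation rather than a variant of an argument in the text. Your ingredients are:
\begin{itemize}
\item the substitution $x\mapsto x/(x-1)$ for (1);
\item the substitution $x\mapsto 1-x$ for (2), together with the computation of $\log_\a(-1)\bmod m$;
\item for (3), the observation that $\{x\in\Fq^*:\log_\a x\equiv_m j\}$ is a coset of the index-$m$ subgroup of order $n$;
\item for (4), a parity count of the fixed points of the involution $x\mapsto x/(x-1)$ on $C_q^m(\a;0,i)$, whose only candidate fixed point is $x=2$.
\end{itemize}

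The citation keeps the paper short and ties the statement to the classical literature. Your route costs a few lines but works directly with the paper's own definition of $C_q^m(\a;i,j)$, so no matching of indexing conventions is needed. It also fits the rest of the paper. Your maps $x\mapsto 1-x$ and $x\mapsto x/(x-1)$ are, like the maps $x\mapsto 1-x$ and $x\mapsto 1-\frac1x$ used in Theorem~\ref{thm:norm infinite} and Lemma~\ref{lema:3}, elements of the anharmonic group of M\"obius maps permuting $0,1,\infty$. Your proof of (4) is the same fixed-point parity trick the paper uses in Theorem~\ref{thm:norm infinite}.

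Two small presentation points. First, the paragraph headed ``For (1)'' actually proves (2), and you do not really compose with $x\mapsto x^{-1}$; you use $x\mapsto 1-x$ alone. Second, to conclude equality of cardinalities in (2), say explicitly that with $L=\log_\a(-1)$ the same map sends $C_q^m(\a;j+L,i+L)$ into $C_q^m(\a;i+2L,j+2L)=C_q^m(\a;i,j)$, since $2L\equiv_m 0$.
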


We will need the following explicit formulas for specific sextic cyclotomic numbers. Although these formulas follow from results in \cite{BerndtEvansWilliams} relating cyclotomic numbers to Jacobi sums, to our knowledge these formulas have not been written down explicitly, so we include the proof for completeness.
\begin{lema}[cf.\ \cite{BerndtEvansWilliams}]\label{lema:formulas specific cyclotomic numbers}
    Let $x$ and $y$ be integers with $x\equiv_3 2$ and $y\equiv_3 0$ such that $p=x^2+3y^2$ is a prime number. Let $\a$ be a primitive element of $\F_p$. Then the following hold.
    \begin{enumerate}[label=(\arabic*)]
        \item If $p\equiv_{12} 1$, then $36c_p^6(\a;2,4)=(x+1)^2+3y^2$,
        \item If $p\equiv_{12} 7$, then $36c_p^6(\a;2,1)=(x+1)^2+3y^2$.
    \end{enumerate}
\end{lema}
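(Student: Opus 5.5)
Our approach is to express the cyclotomic numbers through Jacobi sums of sextic characters and evaluate those sums in the ring $\Eis$ in terms of $x$ and $y$. Fix a multiplicative character $\chi$ of $\F_p^*$ of order $6$ with $\chi(\a)=\zeta_6$. By orthogonality of characters,
\[
36\,c_p^6(\a;i,j)=\sum_{a=0}^{5}\sum_{b=0}^{5}\zeta_6^{-ai-bj}\sum_{u\neq 0,1}\chi^a(u-1)\chi^b(u).
\]
The inner sums are Jacobi sums up to the sign $\chi^a(-1)$, and $\chi^a(-1)=\chi(-1)^a$ with $\chi(-1)=1$ if $p\equiv_{12}1$ and $\chi(-1)=-1$ if $p\equiv_{12}7$. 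Two cases are needed for this reason: this sign is what moves the relevant index from $(2,4)$ to $(2,1)$.

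The sum splits into $36$ terms. The terms with $a=0$ or $b=0$ or $a+b\equiv_6 0$ give trivial sums. These are $p-2$, $-1$, or $-\chi^a(-1)$, and together they contribute roughly $p-8$ plus a small correction that depends on the indices. The remaining $25-5=20$ terms are genuine Jacobi sums $J(\chi^a,\chi^b)$ of absolute value $\sqrt p$, with orders $2,3,6$. These all reduce, via the standard relations $J(\chi^a,\chi^b)=\chi^a(-1)J(\chi^a,\chi^{-a-b})$ and $J(\chi^a,\chi^b)=J(\chi^b,\chi^a)$ together with Galois conjugation, to a few basic sums: $J(\chi,\chi)$, $J(\chi,\chi^2)$, $J(\chi^2,\chi^2)$ and $J(\chi,\chi^3)$. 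The cubic sum is classical. With $p=x^2+3y^2$ and $x\equiv_3 2$, one can normalize so that $J(\chi^2,\chi^2)=x+y\sqrt{-3}$ up to a unit and conjugation. This is fixed by the congruence $J\equiv -1 \pmod 3$ in $\Eis$, which matches $x\equiv_32$, $y\equiv_30$. The sextic sums are then tied to the cubic one through the relations in \cite{BerndtEvansWilliams}. In particular, $J(\chi,\chi)=\chi(4)J(\chi,\chi^3)$, and $J(\chi,\chi^2)$ can be expressed via $J(\chi^2,\chi^2)$ and $\chi(4)$ by the Davenport–Hasse product formula.

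Next we substitute $(i,j)=(2,4)$, respectively $(2,1)$, and add up. The prediction is that the contributions involving the quadratic-related sum $J(\chi,\chi^3)$ and the character value $\chi(2)$ cancel for these particular index pairs. The claimed answer depends only on $x,y$, and $(x+1)^2+3y^2=p+2x+1$. So after cancellation one should be left with $p-8$ from the trivial terms, plus a real multiple of $\mathrm{Re}(x+y\sqrt{-3})$ giving $2x$ up to sign conventions, plus constants adding to $+9$. A useful check is that the result is an integer $\equiv 0 \pmod{36}$-compatible. For instance $p=7$ ($x=2,y=1$ fails since $y\equiv_30$ is needed; take $x=-1$... ) — in practice we will test against small primes such as $p=13$ and $p=31$ by direct count to pin the signs.

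The main obstacle is this bookkeeping: choosing the normalization of $\chi$ relative to $\a$ and of $J(\chi^2,\chi^2)$ relative to the sign of $y$, and verifying that the terms depending on $\chi(2)$ (the cubic character of $2$) truly cancel for these indices. We would first try to avoid the explicit $\chi(2)$ dependence altogether. The plan is to use Proposition~\ref{prp:cyclo}(1),(2) to show that the orbit of $(2,4)$ (resp.\ $(2,1)$) under the symmetries is invariant under multiplying $\a$-indices by $-1$ and by the shifts that swap $\chi(2)$-branches. If that fails, we would fall back on the tabulated sextic formulas in \cite{BerndtEvansWilliams} and specialize them.
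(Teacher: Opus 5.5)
\textbf{There is a genuine gap.} Your route is the paper's route. You expand $36c_p^6(\alpha;i,j)$ by orthogonality into Jacobi sums of a sextic character, which is \cite[Theorem 2.5.1]{BerndtEvansWilliams}, and then insert explicit values of those sums. But you never determine those values, and the mechanism you propose for doing so is wrong.

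\textbf{The missing idea} is what the hypothesis $y\equiv_3 0$ means. It says $p=x^2+27(y/3)^2$, so by Gauss's criterion $2$ is a cubic residue modulo $p$, i.e.\ $\chi^2(2)=\chi(4)=1$. Your normalization $J(\chi^2,\chi^2)=x+y\sqrt{-3}$ with $x,y\in\Z$ already uses this tacitly: the primary cubic Jacobi sum has integral coordinates in $1,\sqrt{-3}$ exactly when $2$ is a cube. Once $\chi(4)=1$, the following relations make each of the twenty non-degenerate sums equal to $\pm J(\chi^2,\chi^2)$ or $\pm\overline{J(\chi^2,\chi^2)}$:
\begin{itemize}
\item $J(\chi,\chi)=\overline{\chi}(4)J(\chi,\chi^3)$ (note $\overline{\chi}(4)$, not $\chi(4)$);
\item $J(\chi,\chi^2)=\overline{\chi}(4)J(\chi^2,\chi^2)$ and $J(\chi,\chi)=\chi(-1)\chi(4)J(\chi^2,\chi^2)$, both from the Davenport--Hasse product formula;
\item $J(\chi^u,\chi^v)=\chi^u(-1)J(\chi^u,\chi^{-u-v})$, together with conjugation.
\end{itemize}
For $p\equiv_{12}7$ the sign is $(-1)^{uv}$, and it is always $+$ for $p\equiv_{12}1$. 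This is exactly the table of $J^*(u,v)$ the paper quotes from \cite[Section 3.1]{BerndtEvansWilliams}. What remains is a finite check. The degenerate terms contribute exactly $(p-2)+2+1$, not ``roughly $p-8$''. The twenty others contribute $J+\overline{J}=2x$, giving $p+2x+1=(x+1)^2+3y^2$.

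\textbf{Your cancellation idea cannot work.} You hope the terms involving the character of $2$ cancel for these index pairs by Proposition~\ref{prp:cyclo}(1),(2). Those symmetries fix $p$ and cannot change whether $2$ is a cube, and the dependence genuinely does not cancel. Write $w=\chi(4)$ and redo the count for $(i,j)=(2,1)$, $p\equiv_{12}7$. The ten terms with $u+v<6$ contribute $\bigl(1+3(\overline{w}-w)\bigr)J(\chi^2,\chi^2)$, which equals $J(\chi^2,\chi^2)$ only when $w=1$.

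Concretely, $c_7^6(\alpha;2,1)=1$, whereas $7=(\pm 2)^2+3(\pm 1)^2$ gives $(x+1)^2+3y^2\in\{4,12\}$. For the same reason, your planned sign check at $p=13=1^2+3\cdot 2^2$ lies outside the hypotheses; there $36\nmid (x+1)^2+3y^2$. A legitimate test is $p=31=2^2+3\cdot 3^2$, where $c_{31}^6(\alpha;2,1)=1=\bigl((2+1)^2+27\bigr)/36$.
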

\begin{proof}
    We only show the second part; the first part follows similarly. By \cite[Theorem 2.5.1]{BerndtEvansWilliams} we have
    $$36c_p^6(\a;2,1)=\sum_{u=0}^5 \sum_{v=0}^5 (-1)^u \zeta_6 ^{-2u-v} J^\ast(u,v)=\sum_{u=0}^5\sum_{v=0}^5 \zeta_6^{u-v} J^*(u,v),$$
    where $\zeta_6$ is a primitive sixth root of unity, and where $J^*$ denotes a variation of the Jacobi sums, whose values are given in \cite[Section 3.1]{BerndtEvansWilliams}: we have
    \[
    J^*(u,v)=\begin{cases}
        p-2 &$if $u=v=0;\\
        -1 &$if exactly one of $u,v$ is zero$;\\
        -(-1)^{uv} &$if $u+v=6;\\
        (-1)^{uv}(x+\e iy\sqrt 3) &$if $u\ne0,v\ne0,u+v<6;\\
        (-1)^{uv}(x-\e iy\sqrt 3) &$if $u+v>6.
    \end{cases}
    \]
    where the choice of $\e \in \{1,-1\}$ accounts for the sign ambiguity of $y$, see \cite[Section 3.1]{BerndtEvansWilliams} for details. Now the sub-sums on these five cases of the double sum of $36c_p^6(\a;2,1)$ written above evaluate to $p-2$, $2$, $1$, $x+\e iy\sqrt 3$ and $x-\e iy\sqrt 3$ respectively. Hence
     \begin{align*}
    36c_p^6(\a;2,1)&=(p-2)+2+1+(x+\e iy\sqrt 3)+(x-\e iy\sqrt 3)\\
    &=p+2x+1 =(x+1)^2+3y^2,
    \end{align*}
     concluding the proof.
\end{proof}
Similar formulas are possible for the case where $y$ is not divisible by 3, cf.~\cite{BerndtEvansWilliams}.

\subsection{Eisenstein integers}
We recall some definitions and results on Eisenstein integers that are needed throughout Section~\ref{sec:even more constructions}. The ring of \emph{Eisenstein integers} $\Eis$ is the smallest subring of the ring of complex numbers that contains a  primitive sixth root of unity $\zeta_6$. Alternatively, $\Eis$ is isomorphic to the quotient $\Z[x]/(x^2-x+1)$ of the polynomial ring over the integers. There are six units in $\Eis$, namely $1$, $\zeta_6$, $\zeta_6^2=\zeta_6-1$, $\zeta_6^3=-1$, $\zeta_6^4=-\zeta_6$, and $\zeta_6^5=1-\zeta_6$. The Eisenstein integers have one non-trivial ring automorphism, namely conjugation (where the conjugate of $\zeta_6$ is $\zeta_6^5=1-\zeta_6$). The \emph{norm} function on the Eisenstein integers is denoted $N:\Eis \to \Z_{\ge 0}$ and defined by
$$N(a\zeta_6+b)=(a\zeta_6+b)(a(1-\zeta_6)+b)=a^2+ab+b^2.$$
We have the following.
\begin{prp}[see {\cite[Chapter 9]{IrelandRosen}}]\label{prop:eis}
    \begin{enumerate}[label=(\arabic*)]
    \item The ring of Eisenstein integers is a principal ideal domain. In other words, every ideal $I$ of $\Eis$ is generated by a single element $a\zeta_6+b$ (which, by multiplying with an appropriate unit, can be chosen such that $a \ge 0$ and $b \ge 0$).
    \item If $a\zeta_6+b\ne 0$, then the index of the ideal generated by $a\zeta_6+b$ is equal to $N(a\zeta_6+b)=a^2+ab+b^2$.
    \item The ring of Eisenstein integers is a unique factorization domain, meaning that every non-zero element admits a unique factorization into primes (up to a unit factor and the ordering of the primes). There are three types of Eisenstein primes (up to a unit factor): firstly, the primes $p\in \N$ with $p\equiv_3 2$; secondly, the Eisenstein integers $a\zeta_6+b$ for which $N(a\zeta_6+b)$ is a prime $p\in \N$ with $p\equiv _3 1$; and thirdly, the Eisenstein integer $\zeta_6+1$.
    \item In particular, every prime $p\in \N$ with $p\not \equiv_3 2$ can be written as $p=a^2+ab+b^2$ for some $a,b\in \N$. More generally, a positive integer $r$ can be written as $a^2+ab+b^2$ if and only if all prime divisors $p \mid r$ with $p\equiv_3 2$ divide $r$ an even number of times.
    \end{enumerate}
\end{prp}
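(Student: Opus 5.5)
This proposition collects standard facts from \cite[Chapter 9]{IrelandRosen}, so I will sketch the classical arguments rather than build new machinery. The tool throughout is the multiplicativity of the norm, $N(\alpha\beta)=N(\alpha)N(\beta)$, which holds because $N(\alpha)=\alpha\bar\alpha$ and conjugation is a ring automorphism.

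\textbf{Part (1).} I will show $\Eis$ is Euclidean with respect to $N$. Given $\alpha,\beta$ with $\beta\neq0$, write $\alpha/\beta=s+t\zeta_6$ with $s,t\in\mathbb{Q}$. Rounding $s,t$ to nearest integers gives $\gamma\in\Eis$ with $N(\alpha/\beta-\gamma)\le \tfrac14+\tfrac14+\tfrac14<1$. Hence $N(\alpha-\gamma\beta)<N(\beta)$, so every ideal is generated by any of its nonzero elements of minimal norm. For the normalization $a,b\ge0$, view $\Eis$ as a lattice in $\mathbb{C}$. The six units act by rotations through multiples of $60^\circ$, and the cone $\{a\zeta_6+b:a,b\ge0\}$ is a $60^\circ$ sector. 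The six rotations of this sector cover the plane, so some unit multiple of the generator lies in it.

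\textbf{Part (2).} The index of a principal ideal equals $|\det|$ of multiplication by $\alpha$ as a $\Z$-linear map on $\Z^2\cong\Eis$. In the basis $1,\zeta_6$, multiplication by $b+a\zeta_6$ has matrix $\begin{pmatrix} b & -a\\ a & a+b\end{pmatrix}$, using $\zeta_6^2=\zeta_6-1$. Its determinant is $b(a+b)+a^2=a^2+ab+b^2$.

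\textbf{Parts (3) and (4).} Unique factorization follows from (1), since every PID is a UFD. To classify primes, note that any Eisenstein prime $\pi$ divides $\pi\bar\pi=N(\pi)$, hence divides some rational prime $p$. Then $N(\pi)\mid N(p)=p^2$, so either $N(\pi)=p$ or $\pi\sim p$.

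The key step, and the main obstacle, is deciding when $p$ splits. We have $\Eis/(p)\cong\F_p[x]/(x^2-x+1)$. The discriminant of $x^2-x+1$ is $-3$, so this quotient is a field exactly when $-3$ is a nonsquare mod $p$. For $p\neq2,3$, quadratic reciprocity (or the observation that $x^2-x+1$ has a root iff $\F_p^*$ contains an element of order $6$) shows this happens iff $p\equiv_3 2$. The case $p=2$ is also inert, since $x^2+x+1$ is irreducible over $\F_2$. The case $p=3$ ramifies: $3=-\zeta_6^2(1+\zeta_6)^2$, up to a unit, with $N(1+\zeta_6)=3$. When $p\equiv_3 1$, the quotient is not a domain, so $p$ is not prime and factors as $\pi\bar\pi$ with $N(\pi)=p$. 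This gives the three types of primes.

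For (4), an integer $r$ is a norm $a^2+ab+b^2$ iff $r=N(\alpha)$ for some $\alpha$. Factor $\alpha$ into primes and use multiplicativity. Inert primes $p\equiv_3 2$ contribute $p^2$, while the other primes contribute $p$. So $r$ is a norm iff each $p\equiv_3 2$ appears to an even power; conversely, any such $r$ is visibly a product of norms. Finally, $a,b\in\N$ can be arranged by the unit normalization from (1), since $N$ is invariant under multiplication by units.
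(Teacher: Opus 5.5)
Your proposal is correct: the Euclidean bound, the unit normalization, the determinant $a^2+ab+b^2$, the identity $3=-\zeta_6^2(1+\zeta_6)^2$ and the classification of primes all check out. The paper gives no proof of its own and simply cites Ireland--Rosen, Chapter 9; your sketch follows that same classical route (Euclidean division via $N$, multiplicativity of $N$, and splitting of $p$ according to whether $x^2-x+1$, equivalently $-3$, has a root modulo $p$), and your determinant computation gives part (2) for every nonzero element at once.
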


We also need the following result, which is the Eisenstein analogue of Dirichlet's Theorem on primes in arithmetic progressions. Similar to the integer setting, for $k\in \Z[\zeta_6]\setminus \{0\}$ and $z,z'\in \Z[\zeta_6]$ we write $z\equiv_k z'$ if $z-z'$ is contained in the ideal $(k)$. Two Eisenstein integers are called coprime if their only common divisors are units.
\begin{thm}[Eisenstein primes in arithmetic progressions (see {\cite[Section V.6]{Neukirch}})]\label{thm:Dirichlet}
    Let $k\in \Eis\setminus \{0\}$ and $z\in \Eis$ such that $k$ and $z$ are coprime. Then there are infinitely many Eisenstein primes $\pi$ with $\pi\equiv_k z$.
\end{thm}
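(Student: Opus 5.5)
The statement is the Eisenstein analogue of Dirichlet's theorem (Theorem~\ref{thm:Dirichlet}). I would deduce it from the generalized Dirichlet density theorem for number fields. The underlying tool is the Hecke $L$-functions attached to characters of the ray class group modulo $(k)$ of $K=\mathbb{Q}(\zeta_6)$.

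First I set up the group. Let $G=(\Eis/(k))^*$. Since $\Eis$ is a principal ideal domain with unit group $\mu_6=\langle\zeta_6\rangle$ (Proposition~\ref{prop:eis}), the ray class group of modulus $(k)$ is $G/\overline{\mu_6}$. Here $\overline{\mu_6}$ is the image of the units in $G$. A prime ideal $\mathfrak p=(\pi)\nmid k$ determines the class of $\pi$ in $G$ only up to $\overline{\mu_6}$.

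Second, I reduce the statement about elements to a statement about ideals. It suffices to find infinitely many prime ideals $\mathfrak p$ whose class in $G/\overline{\mu_6}$ equals the class of $z$. Each such $\mathfrak p$ has a generator $\pi$ with $\pi\equiv_k u z$ for some unit $u$. Replacing $\pi$ by $u^{-1}\pi$, which is again an Eisenstein prime, gives $\pi\equiv_k z$. Distinct prime ideals give non-associate primes, so we obtain infinitely many.

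Third, for the density statement I use characters. For each character $\chi$ of $G/\overline{\mu_6}$, consider $L(s,\chi)=\prod_{\mathfrak p\nmid k}(1-\chi(\mathfrak p)N\mathfrak p^{-s})^{-1}$. Orthogonality gives
\[
\sum_{\mathfrak p\equiv z} N\mathfrak p^{-s}=\frac{1}{h}\sum_\chi \overline{\chi}(z)\log L(s,\chi)+O(1),
\]
where $h=|G/\overline{\mu_6}|$. The trivial character contributes $\log\zeta_K(s)\sim\log\frac{1}{s-1}$ as $s\to1^+$, which diverges. So it suffices to show that the other terms stay bounded.

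The main obstacle is exactly this step: showing $L(1,\chi)\neq 0$ for every nontrivial $\chi$. I would follow the classical route used for Dirichlet's theorem over $\mathbb{Q}$. The product $\prod_\chi L(s,\chi)$ is the Dedekind zeta function of the ray class field $L_k$ of modulus $(k)$, up to finitely many Euler factors. That zeta function has a simple pole at $s=1$. Each $L(s,\chi)$ with $\chi$ nontrivial is holomorphic at $s=1$, so none of them can vanish there, since a zero would cancel the simple pole.

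Alternatively, the non-vanishing can be cited directly from \cite[Section V.6]{Neukirch}. I would in fact cite it rather than reprove the analytic continuation of Hecke $L$-functions, since that is standard but long.
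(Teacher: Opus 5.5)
Your proposal is correct and takes essentially the same route as the paper: the paper gives no proof and just cites the Dirichlet density theorem for ray classes in Neukirch, which is proved there as you sketch, via orthogonality of ray class characters and $L(1,\chi)\neq 0$ obtained from the factorization of the zeta function of the ray class field. The one step specific to $\Eis$ is also handled correctly: you identify the ray class group modulo $(k)$ with $(\Eis/(k))^*/\overline{\mu_6}$ using class number one, then multiply a generator of each suitable prime ideal by a unit to get $\pi\equiv_k z$.
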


\subsection{Conics over finite fields and Hensel's lemma}

We fix the terminology and collect the results from the literature that are needed for the proof of Lemma~\ref{lema:zeta6 infinite}.
For $n\geq1$ we write $\PG(n,q)$ for the projective space of
dimension $n$ over $\mathbb F_{q}$: its points are the $1$-dimensional
subspaces of $\mathbb F_{q}^{\,n+1}$, and we write
$(x_{0}:x_{1}:\dots:x_{n})$ for the point spanned by a non-zero vector
$(x_{0},x_{1},\dots,x_{n})\in\mathbb F_{q}^{\,n+1}$. Thus
$(x_{0}:\dots:x_{n})=(\lambda x_{0}:\dots:\lambda x_{n})$ for every
$\lambda\in\mathbb F_{q}^{*}$, and each point has exactly $q-1$ such
representatives. A point with $x_n\neq0$ is called \emph{affine} and has a unique representative of
the form $(x_0:x_1: \dots : x_{n-1} :1)$. The points with $x_n=0$ are the points \emph{at
infinity}. The \emph{affine space} $\AG(n,q)$ is now defined as the set of affine points of $\PG(n,q)$.

A \emph{conic} $\mathcal C$ of $\PG(2,q)$ is the zero set of a non-zero quadratic form $F\in \Fq[X,Y,Z]$ (i.e.\ a homogeneous polynomial of degree 2); as $F$ is homogeneous, we can consider its set of zeros in $\PG(2,q)$.

If $f\in\mathbb F_{q}[x,y]$ has degree $2$, then its \emph{homogenization}
$$F(X,Y,Z):=Z^{2}\cdot f(X/Z,Y/Z)$$
is a quadratic form, and $(x,y)\mapsto(x:y:1)$ is a
bijection from the set of solutions of $f$ in $\mathbb F_{q}^{2}$ onto the set of affine points of the conic associated with $F$.

\begin{prp}[{cf.\ \cite[Chapter 7]{Casse}}]\label{prop:conic count}
    Consider a quadratic form $F\in \F_q[X,Y,Z]$ and the conic $\mathcal C : F(X,Y,Z)=0$ of $\PG(2,q)$. If no point $P\in \mathcal C$ satisfies
    \[
    \begin{cases}
        \frac{\partial F}{\partial X}(P)=0,\\
        \frac{\partial F}{\partial Y}(P)=0,\\
        \frac{\partial F}{\partial Z}(P)=0,\\
    \end{cases}
    \]
    then $|\mathcal C|=q+1$.
\end{prp}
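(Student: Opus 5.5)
Proposition~\ref{prop:conic count} is the classical statement that a nonsingular conic in $\PG(2,q)$ has exactly $q+1$ points. The plan is to show that $\mathcal C$ is nonempty, pick a point $P\in\mathcal C$, and count the points of $\mathcal C$ by intersecting with the $q+1$ lines of $\PG(2,q)$ through $P$.

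\textbf{Step 1: $\mathcal C$ is nonempty.} I would use the Chevalley--Warning theorem. The form $F$ has degree $2<3$ variables, so the number of zeros of $F$ in $\F_q^3$ is divisible by $p$. Since $(0,0,0)$ is a zero, there is a nonzero zero, and hence a point $P\in\mathcal C$.

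\textbf{Step 2: every line through $P$ meets $\mathcal C$ in at most one further point.} Take a line $\ell$ through $P$ and a second point $Q\in\ell$, with $Q\ne P$. Parametrize $\ell$ by the points $sP+tQ$. Then
\[F(sP+tQ)=s^2F(P)+st\,B(P,Q)+t^2F(Q)=t\bigl(s\,B(P,Q)+t\,F(Q)\bigr),\]
where $B(U,W)=F(U+W)-F(U)-F(W)$ is the associated bilinear form. The key identity is $B(P,Q)=\nabla F(P)\cdot Q$. This holds in every characteristic, including $2$, because for a quadratic form the polarization equals the directional derivative: writing $F=\sum a_{ij}X_iX_j$ with $i\le j$ and checking monomial by monomial, $B(P,Q)=\sum_k Q_k\,\partial_kF(P)$.

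So the points of $\ell\cap\mathcal C$ other than $P$ are the $(s:t)$ with $t\ne0$ and $s\,B(P,Q)+t\,F(Q)=0$. There are two cases:
\begin{itemize}
\item If $B(P,Q)\ne0$, there is exactly one such point.
\item If $B(P,Q)=0$, there is no further point when $F(Q)\ne0$, and the whole line $\ell$ lies in $\mathcal C$ when $F(Q)=0$.
\end{itemize}

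\textbf{Step 3: identify the exceptional lines.} By the hypothesis, $\nabla F(P)\neq0$. Hence $\{Q: \nabla F(P)\cdot Q=0\}$ is a line, the tangent line $T_P$, and it passes through $P$ because Euler's relation gives $B(P,P)=2F(P)=0$. So $B(P,Q)=0$ happens for exactly one line through $P$, namely $T_P$.

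\textbf{Step 4: rule out $T_P\subseteq\mathcal C$.} This is the main obstacle, since it is where nonsingularity must be used at points other than $P$. Suppose $T_P\subseteq\mathcal C$. Then $F$ vanishes on $T_P$, so $F=L\cdot M$, where $L$ is the linear form defining $T_P$ and $M$ is some linear form. I would verify this by a change of coordinates making $L=X$ and then reducing $F$ modulo $X$. The lines $L=0$ and $M=0$ meet in some point $R$, and $R$ lies in $\mathcal C$. Since $\nabla F=M\nabla L+L\nabla M$, we get $\nabla F(R)=0$, which contradicts the hypothesis.

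\textbf{Conclusion.} There are $q+1$ lines through $P$. The tangent $T_P$ contributes no point besides $P$, and each of the other $q$ lines contributes exactly one further point. Every point of $\mathcal C$ other than $P$ lies on exactly one line through $P$, so these further points are distinct and exhaust $\mathcal C\setminus\{P\}$. Therefore $|\mathcal C|=1+q$.
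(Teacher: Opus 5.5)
Your proof is correct and complete. The paper gives no proof of this proposition and only refers to Casse. The usual textbook route is to first bring a non-singular conic into a canonical form such as $X^2=YZ$ using the classification of quadratic forms, and then list its points $(t:t^2:1)$, $t\in\F_q$, together with $(0:1:0)$.

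Your route is different. You obtain a point from Chevalley--Warning; the paper's own Theorem~\ref{thm:Ax-Katz} would serve equally well here. You then count along the pencil of lines through that point. The identity $B(P,Q)=\nabla F(P)\cdot Q$ picks out the tangent as the unique exceptional line. Using the hypothesis at the second point $R=\{L=0\}\cap\{M=0\}$ rules out the tangent being a component of $\mathcal C$.

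What this buys you is a characteristic-free argument that uses the hypothesis in exactly the partial-derivative form in which the paper states it. It needs no separate treatment of characteristic $2$, where the polar form is alternating and hence degenerate, so non-singularity cannot be read off from a Gram matrix. The canonical-form route gives more information, for instance that all non-singular conics are projectively equivalent, but it needs that classification as input.

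One step deserves to be written out: the factorization $F=L\cdot M$ in Step 4. Over a finite field, vanishing at all rational points does not in general imply divisibility. Here it does because of degree. After a coordinate change making $L=X$, the binary quadratic form $F(0,Y,Z)$ vanishes at all $q+1\ge 3$ points of $\PG(1,q)$. A nonzero binary quadratic form has at most two projective zeros, so $F(0,Y,Z)$ is the zero polynomial and $X$ divides $F$.
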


\begin{thm}[Hensel's lemma, cf.\ \cite{Serre}]\label{thm:Hensel}
    Let $p$ be a prime, let $f\in\mathbb Z[x_{1},\dots,x_{n}]$ and let
$a\in\mathbb Z^{n}$. Assume that
\[
  f(a)\equiv_{p}0
  \qquad\text{and}\qquad
  \frac{\partial f}{\partial x_{j}}(a)\not\equiv_{p}0
  \ \text{ for some }j.
\]
Then for every $e\geq1$ there exists $b\in\mathbb Z^{n}$ with $b_i\equiv_{p}a_i$ for all $i=1,\dots,n$ and $f(b)\equiv_{p^{e}}0$.
\end{thm}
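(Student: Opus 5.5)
We argue by induction on $e$, each time changing only the $j$-th coordinate, where $j$ is the index with $\frac{\partial f}{\partial x_j}(a)\not\equiv_p 0$ given in the statement; this is the usual Newton-type lifting. For $e=1$ we simply take $b=a$.

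For the induction step, suppose $b\in\mathbb Z^n$ satisfies $b_i\equiv_p a_i$ for all $i$ and $f(b)\equiv_{p^e}0$. Write $f(b)=p^e c$ with $c\in\mathbb Z$. We look for a lift of the form $b'=b+t p^e \varepsilon_j$, where $\varepsilon_j$ is the $j$-th standard basis vector and $t\in\mathbb Z$ is still to be chosen. Any such $b'$ automatically satisfies $b'_i\equiv_p b_i\equiv_p a_i$ for all $i$.

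The key tool is a Taylor expansion in the single variable $x_j$ with integer coefficients:
\[
f(b+h\varepsilon_j)=\sum_{k\ge 0} h^k\, D_k f(b),\qquad D_k:=\tfrac{1}{k!}\tfrac{\partial^k}{\partial x_j^k}.
\]
The operator $D_k$ maps monomials $x_j^m\cdot(\text{other variables})$ to $\binom{m}{k}x_j^{m-k}\cdot(\text{other variables})$. Hence each $D_kf$ again lies in $\mathbb Z[x_1,\dots,x_n]$, and this identity holds in $\mathbb Z[h]$.

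Now substitute $h=tp^e$. Since $2e\ge e+1$, every term with $k\ge2$ vanishes modulo $p^{e+1}$, so
\[
f(b')\equiv_{p^{e+1}} p^e\Bigl(c+t\,\tfrac{\partial f}{\partial x_j}(b)\Bigr).
\]
A polynomial with integer coefficients respects congruences, so $b\equiv_p a$ gives $\frac{\partial f}{\partial x_j}(b)\equiv_p\frac{\partial f}{\partial x_j}(a)\not\equiv_p 0$. Thus $d:=\frac{\partial f}{\partial x_j}(b)$ is invertible modulo $p$, and we can choose $t\equiv_p -c\,d^{-1}$. Then $c+td\equiv_p0$, so $f(b')\equiv_{p^{e+1}}0$, which completes the induction.

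The only point needing care is the integrality of the Taylor coefficients $D_kf$, which is needed to discard the terms with $k\ge2$ modulo $p^{e+1}$. This is settled by the binomial-coefficient description of $D_k$ above. All remaining steps are routine.
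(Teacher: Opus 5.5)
Your proof is correct, including the one point that needs care: the Taylor coefficients $D_kf$ are integral, so the higher-order terms vanish modulo $p^{2e}$ and hence modulo $p^{e+1}$.

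The paper does not prove this statement itself. It cites Serre and notes that the statement is the case $N=1$, $k=0$ of Theorem~\ref{thm:Hensel general}. Serre's argument uses the same Newton step as yours: fix all variables except $x_j$, Taylor expand, and correct by a suitable multiple of $p^{N-k}$. He, however, iterates this step to get a convergent sequence and an actual zero in the $p$-adic integers, and the congruences modulo every $p^e$ then follow by truncation. Your version stays in $\mathbb{Z}$, never needs completeness, and is fully self-contained.

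What the paper's route through the general form buys is the case where $p$ divides the partial derivative. This case is genuinely used at $p=2$ in the proof of Lemma~\ref{lema:zeta6 infinite}, where $\frac{\partial f}{\partial y}(0,1)=6$. Your argument handles it with no new idea:
\begin{itemize}
\item Let $k=v_p\bigl(\frac{\partial f}{\partial x_j}(a)\bigr)$ and correct by $tp^{e-k}$ instead of $tp^e$.
\item The discarded terms are then divisible by $p^{2(e-k)}$. This is a multiple of $p^{e+1}$ exactly when $e>2k$, which is guaranteed by $e\ge N>2k$.
\item The valuation of the derivative stays equal to $k$, because all iterates remain congruent to $a$ modulo $p^{N-k}$ and $N-k>k$.
\end{itemize}
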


\begin{thm}[Hensel's lemma, general form {\cite[Chapter II, Section 2.2, Theorem 1]{Serre}}]\label{thm:Hensel general}
    Let $p$ be a prime, let $f\in\mathbb Z[x_{1},\dots,x_{n}]$, let
$a\in\mathbb Z^{n}$ and let $N,k\in\mathbb Z$ with $0\leq2k<N$. Assume that
\[
  f(a)\equiv_{p^{N}}0
  \qquad\text{and}\qquad
  v_{p}\!\left(\tfrac{\partial f}{\partial x_{j}}(a)\right)=k
  \ \text{ for some }j,
\]
where $v_p$ denotes the $p$-adic valuation (i.e.\ the largest exponent $k$ such that $p^k$ divides the argument). Then for every $e\geq1$ there exists $b\in\mathbb Z^{n}$ with
$b_i\equiv_{p^{N-k}}a_i$ for all $i=1,\dots,n$ and $f(b)\equiv_{p^{e}}0$.
\end{thm}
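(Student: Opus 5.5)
The plan is to reduce to one variable and run a Newton-type lifting, raising the exponent $N$ one step at a time while keeping the valuation of the derivative equal to $k$.

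\emph{Reduction to one variable.} Fix the index $j$ from the hypothesis and freeze all other coordinates at $a_i$ ($i\neq j$). Put
\[
g(x)=f(a_1,\dots,a_{j-1},x,a_{j+1},\dots,a_n)\in\Z[x].
\]
Then $g(a_j)\equiv_{p^N}0$ and $v_p(g'(a_j))=k$. It suffices to find $b_j\in\Z$ with $b_j\equiv_{p^{N-k}}a_j$ and $g(b_j)\equiv_{p^e}0$; we then take $b_i=a_i$ for $i\ne j$. We may assume $e>N$, since otherwise $b=a$ already works.

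\emph{Key step.} Suppose $a\in\Z$ satisfies $g(a)\equiv_{p^N}0$, $v_p(g'(a))=k$ and $2k<N$. We produce $a'\equiv_{p^{N-k}}a$ with
\[
g(a')\equiv_{p^{N+1}}0 \quad\text{and}\quad v_p(g'(a'))=k.
\]
Write $g(a)=p^Nu$ and $g'(a)=p^kw$ with $p\nmid w$. Set $a'=a+p^{N-k}t$, where $t\in\Z$ satisfies $t\equiv_p -uw^{-1}$.

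Expand $g(a+h)=g(a)+g'(a)h+h^2c$ for some $c\in\Z$. This holds because $g$ has integer coefficients, so the Taylor coefficients $g^{(i)}(a)/i!$ are integers. Then:
\begin{itemize}
\item the first two terms give $p^N(u+wt)\equiv_{p^{N+1}}0$;
\item the quadratic term is divisible by $p^{2N-2k}$, and $2N-2k\ge N+1$ because $N>2k$.
\end{itemize}
Hence $g(a')\equiv_{p^{N+1}}0$.

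For the derivative, the same integrality argument applied to $g'$ gives $g'(a')\equiv_{p^{N-k}}g'(a)$. Since $N-k>k$, this congruence preserves $v_p(g'(a'))=k$.

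\emph{Iteration.} The new data $(a',N+1,k)$ again satisfy $2k<N+1$, so the step can be repeated. Let $a^{(0)}=a_j$ and $a^{(m+1)}=(a^{(m)})'$. At stage $m$ the correction is divisible by $p^{N+m-k}$, so every $a^{(m)}\equiv_{p^{N-k}}a_j$. After $m=e-N$ steps we get $g(a^{(m)})\equiv_{p^{e}}0$, and we take $b_j=a^{(m)}$.

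\emph{Main obstacle.} The only delicate point is ensuring the valuation of the derivative stays exactly $k$ throughout the iteration. This is exactly where the hypothesis $2k<N$ is used: it gives $N-k>k$ for the derivative and $2N-2k\ge N+1$ for the quadratic term. The integrality of the Taylor coefficients for integer polynomials supplies the needed congruences.
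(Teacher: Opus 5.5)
Your argument is correct and is essentially the standard proof in Serre. The paper cites that proof rather than reproving it: freeze all coordinates except $x_j$, lift one power of $p$ at a time via $a'=a+p^{N-k}t$ using the integral Taylor expansion, and use $2k<N$ to get both $p^{N+1}\mid g(a')$ and $v_p(g'(a'))=k$. The only difference is that you stop after $e-N$ steps in $\Z$ instead of passing to a $p$-adic limit, which is exactly what the finite-level formulation here requires.
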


Theorem~\ref{thm:Hensel} is the special case of Theorem~\ref{thm:Hensel general} where $N=1$ and $k=0$.

\section{A unifying framework}\label{sec:construction}

In this section we present our main result (Theorem~\ref{thm:general}). We provide a general ring-theoretic method to construct edge-regular graphs whose vertex sets partition into regular cocliques of nexus 1, from which Theorem~\ref{thm:EvansMachine} constructs Neumaier graphs (if the divisibility requirement of Theorem~\ref{thm:EvansMachine} is satisfied). We also give a sufficient condition for the resulting Neumaier graph to be a strictly Neumaier graph. We then state and prove two auxiliary lemmas that concern the necessary conditions of Theorem~\ref{thm:general}. For instance, in Lemma~\ref{lema:b cyclotomy}, the divisibility condition of Theorem~\ref{thm:general} (coming directly from the divisibility condition in Theorem~\ref{thm:EvansMachine}) is expressed in terms of cyclotomic numbers. Lastly, we show how several existing constructions of Neumaier graphs \cite{ACDKZ2023, GK2018, GT2026} are included in or related to Theorem~\ref{thm:general}, showing that it provides a unifying framework for these existing results.

\subsection{A general construction}

After stating and proving our main result, we prove two auxiliary lemmas concerning the conditions of Theorem~\ref{thm:general}, making it easier to apply.

\begin{thm}\label{thm:general}
    Let $R$ be a finite ring, $q$ be a prime power, and $\xi $ a group homomorphism from $\Fq^*$ to $R^*$ such that $\xi(-1_{\Fq})=-1_R$. Extend $\xi$ to $\Fq$ by setting $\xi(0_{\Fq})=0_R$, and define $\psi:\F_q \to R$ by $\psi(x)=\xi(x)-\xi(x-1_{\Fq})$. Write $B=\psi^{-1}(1_R)$ and $b=|B|$. Let $\Gamma(R,q,\xi)$ be the Cayley graph of $(R\times \Fq,+)$ with connecting set $\{(\xi(x),x) : x \in \Fq^*\}$. Then the following hold.
    \begin{enumerate}
        \item The graph $\Gamma(R,q,\xi)$ is an edge-regular graph with parameters $(q|R|,q-1,b-2)$ whose vertex set can be partitioned into $q$ many $1$-regular cocliques of size $|R|$.
        \item If $|R|$ is a divisor of $b$ (say $t=b/|R|$), then for any choice of permutations $\pi_2,\dots,\pi_t \in \Sym_q$ the graph $\overline \Gamma (R,q,\xi) \coloneqq F_{(\pi_2,\dots,\pi_t)}(\Gamma(R,q,\xi),\dots,\Gamma(R,q,\xi))$ is a Neumaier graph with parameters $(bq,b+q-2,b-2;1,b)$.
        \item The graph $\overline \Gamma(R,q,\xi)$ is strongly regular if and only if $t=1$ and $|\psi^{-1}(r)|$ is constant over all $r\in R\setminus \{1_R\}$. In this case $\overline \Gamma(R,q,\xi)$ has parameters $(bq,b+q-2,b-2,\frac{b+q-2}{b-1})$.
        \item If $R$ is not a field or $\xi:\Fq^*\to R^*$ is not surjective, then $\overline \Gamma (R,q,\xi)$ is a strictly Neumaier graph.
    \end{enumerate}
\end{thm}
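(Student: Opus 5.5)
The plan is to verify edge-regularity and the coclique partition directly from the Cayley structure. Parts 2–4 then follow from Theorems~\ref{thm:EvansMachine} and~\ref{thm:EMstrict}, plus an explicit computation of common neighbours of non-adjacent vertices in the case $t=1$.

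\textbf{Part 1.} Write $S=\{(\xi(x),x):x\in\Fq^*\}$.
\begin{itemize}
\item $S$ does not contain $0$. It is inverse-closed because $\xi(-x)=\xi(-1)\xi(x)=-\xi(x)$.
\item The second coordinates of elements of $S$ are distinct, so $|S|=q-1$ and the graph is $(q-1)$-regular.
\item The sets $C_a=R\times\{a\}$, for $a\in\Fq$, are cocliques, since $S$ has no element with second coordinate $0$.
\item Each $C_a$ has nexus $1$. A vertex $(r,a')$ with $a'\ne a$ has exactly one neighbour in $C_a$, namely $(r-\xi(a'-a),a)$.
\end{itemize}
For edge-regularity, Lemma~\ref{lema:edge-transitive} does not apply directly, so I count by vertex-transitivity. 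Take the edge $\{0,(\xi(y),y)\}$ with $y\neq 0$. A common neighbour is $(\xi(x),x)$ with $x\notin\{0,y\}$ and
\[
\xi(x)-\xi(y)=\xi(x-y).
\]
Dividing by $\xi(y)$ and putting $u=x/y$, this becomes $\psi(u)=1$ with $u\notin\{0,1\}$. Since $\psi(0)=-\xi(-1)=1$ and $\psi(1)=1$, the count is $b-2$, independent of $y$.

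\textbf{Part 2.} This is immediate from Theorem~\ref{thm:EvansMachine} with $v=q|R|$, $k=q-1$, $\lambda=b-2$. Then $t=(\lambda+2)(k+1)/v=b/|R|$, giving parameters $(bq,\,b+q-2,\,b-2;\,1,\,b)$.

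\textbf{Part 3.} First I handle the degenerate case where $\Gamma(R,q,\xi)$ is a disjoint union of cliques.
\begin{itemize}
\item Then $\xi(x)-\xi(y)=\xi(x-y)$ for all $x,y$, so $\xi$ is additive as well as multiplicative. Hence $\xi$ is an injective ring homomorphism $\Fq\to R$ and $b=q$.
\item Since $|R|$ divides $b=q$ and $R$ contains a copy of $\Fq$, we get $R\cong\Fq$, so $t=1$.
\item In this case every fibre $\psi^{-1}(r)$ with $r\ne 1$ is empty, so the fibre condition holds trivially.
\end{itemize}
Otherwise, if $t\ge2$, Theorem~\ref{thm:EMstrict} shows the graph is strictly Neumaier.

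So assume $t=1$, i.e.\ $|R|=b$. Then $\overline\Gamma$ is $\Gamma(R,q,\xi)$ with each $C_a$ turned into a clique. Up to translation, a non-adjacent pair is $0$ and $(r,y)$ with $y\ne0$ and $r\ne\xi(y)$. Its common neighbours are of two kinds.
\begin{itemize}
\item Those inside $\Gamma$. Writing $s=r\xi(y)^{-1}\ne 1$ and running the same substitution as in Part 1, these correspond to $u\notin\{0,1\}$ with $\psi(u)=s$.
\item Exactly two more: the unique neighbour of $(r,y)$ in $C_0$, and the unique neighbour of $0$ in $C_y$. These are distinct from the first kind.
\end{itemize}
Hence $\mu=|\psi^{-1}(s)|+2$, and $s$ ranges over all of $R\setminus\{1\}$. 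So the graph is strongly regular if and only if these fibre sizes are constant. In that case the constant is $(q-b)/(b-1)$, because the fibres of $\psi$ over $R\setminus\{1\}$ partition $\Fq\setminus B$. This gives $\mu=(b+q-2)/(b-1)$.

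\textbf{Part 4.} I expect this to be the main obstacle: turning the hypothesis on $R$ or $\xi$ into non-constancy of the fibres. When $t\ge2$, the degenerate case above forces $R\cong\Fq$ with $\xi$ surjective, which is excluded, so Theorem~\ref{thm:EMstrict} applies. When $t=1$, the plan is to compute the fibre of $0$ explicitly.
\begin{itemize}
\item We have $\psi(u)=0$ exactly when $\xi(u/(u-1))=1$, with $u\ne0,1$. As $u$ ranges over $\Fq\setminus\{0,1\}$, the quotient $u/(u-1)$ ranges over $\Fq^*\setminus\{1\}$.
\item So $|\psi^{-1}(0)|=|K|-1$, where $K=\ker\xi$.
\item If the fibres were constant, then $(|K|-1)(b-1)=q-b$, i.e.\ $|K|(b-1)=q-1$.
\item This gives $|R|-1=b-1=(q-1)/|K|=|\operatorname{im}\xi|\le|R^*|\le|R|-1$.
\end{itemize}
Equality throughout forces $R^*=R\setminus\{0\}$ and $\operatorname{im}\xi=R^*$. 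Then $R$ is a finite division ring, hence a field by Wedderburn, and $\xi$ is surjective. This contradicts the hypothesis of Part 4, so the graph is not strongly regular and is therefore strictly Neumaier.
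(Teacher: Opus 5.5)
Your proposal is correct and follows essentially the same route as the paper. The shared steps are: the cocliques $R\times\{a\}$; Theorem~\ref{thm:EvansMachine} for part 2; the additive-$\xi$ degenerate case together with Theorem~\ref{thm:EMstrict} to force $t=1$; the count $\mu=2+|\psi^{-1}(s)|$; and the fibre size $|\psi^{-1}(0_R)|=|\ker\xi|-1$, which forces $R$ to be a field and $\xi$ to be surjective.

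Two small remarks. First, Lemma~\ref{lema:edge-transitive} does apply directly, to the ring $R\times\Fq$: $S$ is the graph of $\xi$, hence a subgroup of $(R\times\Fq)^*$ containing $-1$. This is how the paper gets edge-regularity, though your direct count is equally valid. Second, Wedderburn is unnecessary, because $R\setminus\{0_R\}=\xi(\Fq^*)$ is cyclic, so $R$ is already commutative.
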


\begin{proof}
    For the first part, 
    note that the connecting set
    $S := \{(\xi(x),x) : x \in \Fq^*\}$ contains $-1_{R\times\Fq}=(-1_R,-1_{\Fq})$ because $\xi(-1_{\Fq})=-1_R$, and so $\Gamma(R,q,\xi)$ is a well-defined Cayley graph. Because $S$ is a subgroup of $(R\times \Fq)^*$, the graph $\Gamma(R,q,\xi)$ is edge-transitive by Lemma~\ref{lema:edge-transitive}, and hence edge-regular. It is easy to see that $\Gamma(R,q,\xi)$ has $q|R|$ vertices and has valency $q-1$. For the parameter $\lambda$, note that $B$ contains $0_{\Fq}$ and $1_{\Fq}$, and that the common neighbors of $(0_R,0_{\Fq})$ and $(1_R,1_{\Fq})$ are precisely $(\xi(x),x)$ for $x\in B\setminus \{0_{\Fq},1_{\Fq}\}$, so that $\lambda=b-2$. Next, note that $T\coloneqq S\sqcup \{(0_R,0_{\Fq})\}$ forms a transversal for the subgroup $\{(r,0) : r\in R\}$ of $(R\times \Fq,+)$, so that for each $x\in \Fq$, the set $\{(r,x) : r\in R\}$ is a 1-regular coclique.
    
    The second part follows from Theorem~\ref{thm:EvansMachine} and from the parameters given in the first part.

    For the third part, we first claim that if $\overline\Gamma(R,q,\xi)$ is strongly regular, then $t=1$. Indeed, if $t>1$, then by Theorem~\ref{thm:EMstrict} we know that $\Gamma(R,q,\xi)$ is a disjoint union of complete graphs. This implies that $T$ is a subgroup of $(R\times \Fq,+)$, and that $\xi$ is additive. In particular $\xi:\F_q \to R$ is a ring morphism. Since $\Fq$ is a field, $\xi$ is injective, so $|R|\ge q$. Furthermore, since $\xi$ is additive, it is clear that $\psi(x)=1_R$ for every $x\in \Fq$, and that $b=q$, so that $t=b/|R|>1$ contradicts $|R|\ge q$.

    We now assume that $t=1$, and show that in this case $\overline\Gamma(R,q,\xi)$ is strongly regular if and only if $|\psi^{-1}(r)|$ is constant over all $r\in R\setminus \{1_R\}$. Let $u$ and $v$ be two vertices that are not adjacent in $\overline\Gamma(R,q,\xi)$, then their second coordinates are not equal, and we can apply a suitable automorphism $\varphi$ so that $\varphi(u)=(0_R,0_{\Fq})$ and $\varphi(v)=(r,1_{\Fq})$ for some $r\in R\setminus \{1_R\}$. Note that then a neighbor $w$ of $\varphi(u)$ is also a neighbor of $\varphi(v)$ if and only if $w$ is one of $(1_R,1_{\Fq})$ and $(r-1_R,0_{\Fq})$, or $w=(\xi(x),x)$ with $\psi(x)=r$. In particular, the number of common neighbors of $u$ and $v$ in $\overline \Gamma(R,q,\xi)$ is equal to $2+|\psi^{-1}(r)|$. It follows that $\overline\Gamma(R,q,\xi)$ is strongly regular if and only if $|\psi^{-1}(r)|$ is constant over all $r\in R\setminus \{1_R\}$.

    For the last part, note that $|\psi^{-1}(1_R)|=b$ and
    $$\sum_{r\in R} |\psi^{-1}(r)|=|\Fq|=q.$$
    From this and the previous part it follows that if $\overline \Gamma(R,q,\xi)$ is strongly regular, then $|\psi^{-1}(r)|=(q-b)/(b-1)$ for every $r\in R\setminus \{1_R\}$. In particular, this holds for $r=0_R$. Note that $\psi(x)=0_R$ is equivalent to $\xi(1-1/x)=1_R$. If we write $m=|\xi(\Fq^*)|$, then there are exactly $(q-1)/m$ solutions $y$ to $\xi(y)=1_R$. As this includes $y=1$ which is never equal to $1-1/x$, this means that there are exactly $(q-m-1)/m$ solutions $x$ to $\psi(x)=0_R$. In other words, $|\psi^{-1}(0_R)|=(q-m-1)/m$. Now $|\psi^{-1}(0_R)|=(q-b)/(b-1)$ it implies that $m=b-1$. Because $t=1$ by the previous part, we see that $|R|=b$, and hence that $\xi(\F_q^*)=R\setminus \{0_R\}$. We conclude that $\xi:\Fq^*\to R^*$ is surjective and that $R$ is a field. The result now follows.
\end{proof}

To apply Theorem~\ref{thm:general}, we need a group homomorphism $\xi:\F_q^* \to R^*$ with $\xi(-1_{\Fq})=-1_R$. Given a ring $R$ and an element $\rho\in R^*$, the following auxiliary lemma characterizes which finite fields admit such a group homomorphism with image $\langle \rho \rangle _{R^*}$.
\begin{lema}\label{lema:-1}
    Let $R$ be a finite ring, and let $\rho\in R^*$. Let $\ell$ be the smallest positive integer such that $\rho^{\ell}=-1_R$ (in particular, suppose that such an integer exists). Let $q$ be a prime power, and let $\a$ be a primitive element of $\Fq$. Then sending $\a \mapsto \rho$ induces a well-defined group homomorphism $\xi:\Fq^* \to R^*$ such that $\xi(-1_{\Fq})=-1_R$ if and only if one of the following holds:
    \begin{itemize}
        \item $\Char(R)=2$, $\ell $ is odd, and $q\equiv_{\ell} 1$,
        \item $\Char(R)=2$, $\ell$ is even, and $q\equiv_{2\ell} 1$,
        \item $\Char(R)\ge 3$, and $q\equiv_{4\ell} 2\ell+1$.
    \end{itemize}
\end{lema}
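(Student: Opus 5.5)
The approach is to reduce everything to the order of $\rho$ in $R^*$. Since $\Fq^*=\langle\a\rangle$ is cyclic of order $q-1$, the assignment $\a\mapsto\rho$ extends to a well-defined group homomorphism $\xi:\Fq^*\to R^*$ if and only if $\operatorname{ord}(\rho)$ divides $q-1$, and then $\xi(\a^i)=\rho^i$. The remaining condition is $\xi(-1_{\Fq})=-1_R$. Here $-1_{\Fq}=1_{\Fq}$ when $q$ is even, and $-1_{\Fq}=\a^{(q-1)/2}$ when $q$ is odd. So the condition reads $1_R=-1_R$ when $q$ is even, and $\rho^{(q-1)/2}=-1_R$ when $q$ is odd. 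I would then split according to $\Char(R)$.

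\emph{Case $\Char(R)=2$.} Here $-1_R=1_R$, so $\ell$ is the least positive integer with $\rho^\ell=1_R$, i.e.\ $\operatorname{ord}(\rho)=\ell$.
\begin{itemize}
\item If $q$ is even, the sign condition is automatic, and we need $\ell\mid q-1$. Since $q-1$ is odd, this forces $\ell$ to be odd.
\item If $q$ is odd, we need $\rho^{(q-1)/2}=1_R$, i.e.\ $\ell\mid (q-1)/2$, i.e.\ $q\equiv_{2\ell}1$.
\end{itemize}
For $\ell$ even, only odd $q$ are possible, which gives exactly $q\equiv_{2\ell}1$. For $\ell$ odd, the two subcases combine to $q\equiv_\ell 1$: when $q$ is odd, $q-1$ is even and $\gcd(2,\ell)=1$, so $\ell\mid q-1$ already implies $2\ell\mid q-1$.

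\emph{Case $\Char(R)\ge 3$.} Here $-1_R\neq 1_R$. I first show that $\operatorname{ord}(\rho)=2\ell$. Certainly $\rho^{2\ell}=1_R$, so $d:=\operatorname{ord}(\rho)$ divides $2\ell$.
\begin{itemize}
\item $d=\ell$ would give $-1_R=1_R$, which is impossible.
\item $d<\ell$ would give $\rho^{\ell-d}=-1_R$, contradicting the minimality of $\ell$.
\item No divisor of $2\ell$ lies strictly between $\ell$ and $2\ell$.
\end{itemize}
Hence $d=2\ell$. Next, $q$ must be odd: for even $q$ the sign condition would read $1_R=-1_R$. For odd $q$ we need $2\ell\mid q-1$ and $\rho^{(q-1)/2}=-1_R$. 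The latter means $(q-1)/2\equiv_{2\ell}\ell$, i.e.\ $q-1\equiv_{4\ell}2\ell$. This congruence already implies $2\ell\mid q-1$, and it forces $q$ to be odd since $q-1$ is then even. So the condition is exactly $q\equiv_{4\ell}2\ell+1$.

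The only mildly delicate points are the order computation $\operatorname{ord}(\rho)=2\ell$ in odd characteristic and merging the parity subcases in characteristic $2$. The rest is bookkeeping of congruences.
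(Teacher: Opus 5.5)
Your proposal is correct and follows essentially the same route as the paper's proof. Both split on $\Char(R)$, identify the order of $\rho$ as $\ell$ or $2\ell$, and translate well-definedness and $\rho^{(q-1)/2}=-1_R$ into the stated congruences. You additionally make explicit two steps the paper leaves implicit: the justification that $\rho$ has order $2\ell$ in odd characteristic, and the merging of the even-$q$ and odd-$q$ subcases when $\Char(R)=2$.
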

\begin{proof}
    First, suppose that the characteristic of $R$ is 2. Then $-1_R=1_R$, and the order of $\rho$ in $R^*$ is $\ell$. Now $\xi$ is well-defined if and only if $q\equiv_\ell 1$. If $q$ is even, then $\xi(-1_{\Fq})=-1_R$ is automatically satisfied. If $q$ is odd, then
    $$\xi(-1_{\Fq})=\xi(\a^{(q-1)/2})=\rho^{(q-1)/2},$$
    so that $\xi(-1_{\Fq})=-1_R$ is equivalent to $q\equiv_{2\ell} 1$.

    Next, suppose the characteristic of $R$ is at least 3. Then $-1_R\ne 1_R$, and the order of $\rho$ in $R^*$ is $2\ell$. Now $\xi$ is well-defined if and only if $q\equiv_{2\ell} 1$. Then $q$ is odd and so, similarly as above, we have $\xi(-1_{\Fq})=-1_R$ if and only if $(q-1)/2 \equiv_{2\ell} \ell$, concluding the proof.
\end{proof}

As a second auxiliary lemma, we give a formula for the parameter $b$ from Theorem~\ref{thm:general} in terms of cyclotomic numbers. This is analogous to \cite[Theorem 5.7]{ACDKZ2023}, where a formula is provided for a parameter analogous to $b$ in terms of Jacobi sums; note that Jacobi sums and cyclotomic numbers are related \cite[Theorem 2.5.1]{BerndtEvansWilliams} (a fact that is also used in the proof of Lemma~\ref{lema:formulas specific cyclotomic numbers}).

\begin{lema}\label{lema:b cyclotomy}
    Let $R$, $q$, $\xi$, $B$, and $b$ be as in Theorem~\ref{thm:general}. Let $\a$ be a primitive element of $\Fq$, and write $\rho=\xi(\a)$ and $G=\langle \rho \rangle_{R^*}=\{\xi(x) : x \in \Fq^*\}$. Let $m=|G|$ be the order of $\rho$ in $R^*$. Then
    $$b=2+\sum_{r\in  G\cap (G+1_R)} c_q^m\Big (\alpha;\log_{\rho} (r-1_R ),\log_{\rho}(r)\Big ).$$
    If $G=R\setminus \{0_R\}$ (i.e.\ if $R$ is a field and $\rho$ is a primitive element of $R$), then this formula simplifies to
    $$b=2+\sum_{j=1}^{m-1} c_q^m\Big (\a;\log_\rho(\rho^j-1_R),j\Big ).$$
\end{lema}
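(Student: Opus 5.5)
We count $B=\psi^{-1}(1_R)$ directly. The elements $x=0_{\Fq}$ and $x=1_{\Fq}$ always lie in $B$: we have $\psi(0)=\xi(0)-\xi(-1)=0_R+1_R=1_R$ and $\psi(1)=\xi(1)-\xi(0)=1_R$. This accounts for the summand $2$. For the remaining $x\in\Fq\setminus\{0,1\}$, both $x$ and $x-1$ lie in $\Fq^*$, so $\xi(x),\xi(x-1)\in G$. Hence $x\in B$ if and only if $\xi(x)=r$ and $\xi(x-1)=r-1_R$ for some $r\in G$ with $r-1_R\in G$, that is, $r\in G\cap(G+1_R)$. The fibres over distinct $r$ are disjoint, so
\[
b-2=\sum_{r\in G\cap(G+1_R)} \bigl|\{x\in\Fq^*\setminus\{1\}:\xi(x)=r,\ \xi(x-1)=r-1_R\}\bigr|.
\]

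Next we translate each fibre condition into congruences on discrete logarithms. Since $\xi(\a)=\rho$ and $\rho$ has order $m$, we get $\xi(\a^k)=\rho^{k}$. So for $y\in\Fq^*$ we have $\xi(y)=s$ (with $s\in G$) if and only if $\log_\a(y)\equiv_m \log_\rho(s)$. Here $\log_\rho:G\to\Z_m$ is well defined because $G$ is cyclic of order $m$ generated by $\rho$, and $m$ divides $q-1$ because $\xi$ is a well-defined homomorphism with image of size $m$. Thus the fibre over $r$ is exactly
\[
C_q^m\bigl(\a;\log_\rho(r-1_R),\log_\rho(r)\bigr),
\]
whose size is the cyclotomic number in the statement. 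Summing gives the first formula.

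For the simplification, assume $G=R\setminus\{0_R\}$. Then $G\cap(G+1_R)$ consists of the elements $r\neq 0_R$ with $r-1_R\neq 0_R$, i.e.\ $R\setminus\{0_R,1_R\}$. Writing $r=\rho^j$ with $j\in\{1,\dots,m-1\}$ (the case $j=0$ gives $r=1_R$) yields the second formula.

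There is no real obstacle here. The only point requiring care is making the logarithm conventions match the definition of $C_q^m(\a;i,j)$: its first index refers to $x-1$ and its second to $x$. It is also worth checking that $x=0$ and $x=1$ are excluded from the cyclotomic sets, as they should be, since they are counted separately in the $2$.
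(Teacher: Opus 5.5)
Your proof is correct and follows essentially the same route as the paper: you split off $x\in\{0_{\Fq},1_{\Fq}\}$, show every other $x\in B$ has $\xi(x)=r\in G\cap(G+1_R)$, and identify each fibre with $C_q^m(\alpha;\log_\rho(r-1_R),\log_\rho(r))$ via $\xi(y)=\rho^i\iff\log_\alpha(y)\equiv_m i$. Your explicit checks that $0_{\Fq},1_{\Fq}\in B$ (using $\xi(-1_{\Fq})=-1_R$) and that $m\mid q-1$ are small details the paper leaves implicit.
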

\begin{proof}
    Let $\psi$ be as in Theorem~\ref{thm:general}. Let $x\in \Fq$ such that $\psi(x)=1_R$. If $\xi(x) =0$ or $\xi(x-1_{\Fq})=0$, then $x=0$ or $x=1$. Otherwise, both $\xi(x)$ and $\xi(x-1_{\Fq})=\xi(x)-1_R$ are elements of $G$, and so $r=\xi(x)$ is an element of $G\cap (G+1_R)$.

    Now let $r\in G\cap (G+1_R)$. Write $B_r$ for the set of elements $x\in \Fq^* \setminus \{1_{\Fq}\}$ such that $\xi(x)=r$ and $\xi(x-1_{\Fq})=r-1_R$. For an $x\in \Fq^*$, note that $\xi(x)=\rho^i$ if and only if $\log_\a (x) \equiv_m i$. Hence,
    $$B_r=\{x\in \Fq^*\setminus \{1_{\Fq}\}: \log_\alpha(x-1_{\Fq}) \equiv_m \log_\rho(r-1_R) \text{ and }\log_\alpha(x) \equiv _m \log_\rho(r) \}.$$
    Hence, $B_r=C_q^m(\a;\log_{\rho}(r-1_R),\log_{\rho}(r))$ and the formula for $b$ as stated is now clear.

    If $G=R\setminus \{0_R\}$, then $G\cap (G+1_R)=\{\rho ^j : 1\le j \le m-1\}$, from which the simplified formula follows.
\end{proof}

\subsection{Recovering existing constructions of Neumaier graphs}
We recover several existing constructions of Neumaier graphs by giving four examples of applications of Theorem~\ref{thm:general}, that are equivalent to constructions due to Greaves and Koolen \cite{GK2018}, to Abiad, Castryck, De Boeck, Koolen, and Zeijlemaker \cite{ACDKZ2023}, and to Greaves and Tan \cite{GT2026}. In particular, all examples of Neumaier graphs from \cite{ACDKZ2023, GK2018} are special cases of Theorem~\ref{thm:general}.

\subsubsection*{Constructions from \cite{GK2018}}
We recover \cite[Theorems 3.3 and 3.6]{GK2018} as applications of Theorem~\ref{thm:general}.

\begin{exa}[cf.\ {\cite[Theorem 3.3]{GK2018}}]\label{exa:const18 1}
    Let $R=\F_8$, and let $\rho$ be a primitive element of $\F_8$ so that $\F_8=\F_2[\rho]$ and $\rho^3=\rho^2+1$. Let $q\equiv_{14} 1$ be a prime power, let $\a$ be a primitive element of $\Fq$, and define $\xi_1$ and $\xi_2$ as group homomorphisms $\Fq^*\to R^*$ through $\xi_1(\a)=\rho$ and $\xi_2(\a)=\rho^3$. Then $\xi_i(-1)=-1$ by Lemma~\ref{lema:-1} for $i=1,2$, so we can apply Theorem~\ref{thm:general}. For the values of the parameter $b$ (which we denote $b_1$ and $b_2$ in this case), note that by the formula given in Lemma~\ref{lema:b cyclotomy} and the first two parts of Proposition~\ref{prp:cyclo} we have
    \begin{align*}
        b_1&=2+c_q^7(\a;5,1)+c_q^7(\a;3,2)+c_q^7(\a;2,3)+c_q^7(\a;6,4)+c_q^7(\a;1,5)+c_q^7(\a;4,6)\\
        &=2+6c_q^7(\a;1,5)\\
        b_2&=2+c_q^7(\a;3,1)+c_q^7(\a;6,2)+c_q^7(\a;1,3)+c_q^7(\a;5,4)+c_q^7(\a;4,5)+c_q^7(\a;2,6)\\
        &=2+6c_q^7(\a;1,3).
    \end{align*}
    Now if $c^7_q(\a;1,5) \equiv _4 1$, then $|R|=8$ divides $b_1$ and we get a Neumaier graph $\overline\Gamma(\F_8,q,\xi_1)$. Similarly, if $c^7_q(\a;1,3)\equiv_4 1$, then $\overline \Gamma(\F_8,q,\xi_2)$ is a Neumaier graph. This example is equivalent to the construction given in \cite[Theorem 3.3]{GK2018}, when the permutations $\pi_2,\dots,\pi_t$ as in Theorems~\ref{thm:EvansMachine} and \ref{thm:general} are chosen to be identity permutations.
\end{exa}
Note that Example~\ref{exa:const18 1} uses $(\F_8,+)=(\F_2[\rho],+)$, whereas \cite[Theorem 3.3]{GK2018} uses the group $(\Z_2,+)^3$. However, considering $\F_2[\rho]$ as a 3-dimensional vector space over $\F_2$ with the coordinatization mapping $\varphi :\F_2[\rho] \to (\F_2)^3$ with respect to the basis $\{\rho,\rho^4,\rho^2\}$, in fact $\Psi_1=\log_{\rho} \circ \varphi^{-1}$ and $\Psi_2=\log_{\rho^3} \circ \varphi^{-1}$, with $\Psi_i$ from \cite[Theorem 3.3]{GK2018}.

Also note that Lemma~\ref{lema:-1} gives $q\equiv_7 1$ as a requirement, not $q\equiv_{14} 1$ as in Example~\ref{exa:const18 1}; as we will see shortly in Theorem~\ref{thm:norm}, it is possible to obtain Neumaier graphs when $R=\F_8$ and $q$ is a power of 8.

\begin{exa}[cf.\ {\cite[Theorem 3.6, Example 1]{GK2018}}]\label{exa:const18 2}
    Let $R=\F_4$, and let $\rho$ be a primitive element of $\F_4$ (so that $\F_4=\F_2[\rho]$ and $\rho^2=\rho+1$). Let $q\equiv_6 1$ be a prime power, let $\a$ be a primitive element of $\F_q$, and define $\xi:\Fq^*\to \F_4^*$ through $\xi(\a)=\rho$. Then $\xi(-1)=-1$ by Lemma~\ref{lema:-1}, so we can apply Theorem~\ref{thm:general}. For the value of $b$, by Lemma~\ref{lema:b cyclotomy} and the first or second part of Proposition~\ref{prp:cyclo} we have
    $$b=2+c_{q}^3(\a;1,2)+c_{q}^3(\a;2,1)=2+2c_{q}^3(\a;1,2).$$
    If $c_{q}^3(\a;1,2)$ is odd, then $|R|=4$ divides $b$ and $\overline\Gamma(\F_{4},q,\xi)$ is a Neumaier graph. By the third part of Proposition~\ref{prp:cyclo} we have
    $$c_q^3(\a;0,2)+c_q^3(\a;1,2)+c_q^3(\a;2,2)=1+c_q^3(\a;0,0)+c_q^3(\a;1,0)+c_q^3(\a;2,0),$$
    in which two pairs of numbers in the above equation cancel out by the first and second parts of Proposition~\ref{prp:cyclo}, so that we have $c_q^3(\a;1,2)=1+c_q^3(\a;0,0)$. It now follows that $c_q^3(\a;1,2)$ is even if and only if $c_q^3(\a;0,0)$ is odd, which by the last part of Proposition~\ref{prp:cyclo} is if and only if $2$ is a third power in $\Fq$. Thus, we get a Neumaier graph $\overline \Gamma(\F_4,q,\xi)$ if and only if $2$ is not a third power in $\Fq$. For example, if $q=7^e$ with $e$ not divisible by 3, then $\overline \Gamma (\F_{4},7^e,\xi)$ is a Neumaier graph. This example is equivalent to the construction given in \cite[Theorem 3.6 and Example 1]{GK2018}.
\end{exa}
Again, \cite[Theorem 3.6]{GK2018} uses $\Z_2^2$ instead of $\F_4$, but this can be seen to be equivalent after any choice of basis as in Example~\ref{exa:const18 1}. We would now like to note that all examples of Neumaier graphs from \cite{GK2018} are in fact covered by Theorem~\ref{thm:general}.

Also note that in Theorem~\ref{thm:norm} we see that $q$ a power of 4 (as opposed to $q\equiv_6 1$ as in Example~\ref{exa:const18 2}) also gives Neumaier graphs for $R=\F_4$.

\subsubsection*{Construction from \cite{ACDKZ2023}}

We recover \cite[Theorems 4.7 and 4.9]{ACDKZ2023} as an application of Theorem~\ref{thm:general}.
\begin{exa}[cf.~{\cite[Theorems 4.7 and 4.9]{ACDKZ2023}}]\label{exa:2023}
    Let $q=p$ be a prime, and let $\alpha$ be a primitive element of $\Fp$. Let $r\ge 3$ be an odd integer coprime to $p$ and let $R=\Z_r$. Let $\xi: \Fp^* \to \Z_r^*$ be a group homomorphism such that $\xi(-1_{\Fp})=-1_{\Z_r}$. If $r$ is a divisor of $b$ (with $b$ as in Theorem~\ref{thm:general}), then $\overline \Gamma (\Z_r,p,\xi)$ is a Neumaier graph.
\end{exa}
This example includes the construction given in \cite[Theorem 4.9]{ACDKZ2023} (where $r$ is assumed to be a prime different from $p$). In \cite{ACDKZ2023} different notation is used. The graph $\Gamma(\Z_r,p,\xi)$ from Example~\ref{exa:2023} is isomorphic to the graph $\Gamma_{pr}(a)$ of \cite[Theorem 4.7]{ACDKZ2023}, where $a\in \Z_{pr}$ is such that $a\equiv_p \a$ and $a\equiv_r \xi(\a)$. Furthermore, the set $S_{pr}(a)$ from \cite{ACDKZ2023} (which is defined as $\langle a\rangle_{\Z^*_{pr}} $) is in bijection with $\Fp^*$. Moreover, the set $S_{pr}(a) \cap (S_{pr}(a) +1)$ has cardinality equal to the parameter $\lambda$ of the edge regular graph $\Gamma_{pr}(a)$ due to \cite[Theorem 4.7]{ACDKZ2023}, and it is in bijection with $B\setminus \{0,1\}$. As a consequence, the requirement in Example~\ref{exa:2023} that $r$ is a divisor of $b$ is equivalent to the requirement of \cite[Theorem 4.9]{ACDKZ2023} that $|S_{pr}(a) \cap (S_{pr}(a)+1)|\equiv_r -2$. In particular, \cite[Theorem 4.9]{ACDKZ2023} can be seen as a special case of Theorem~\ref{thm:general}.

\subsubsection*{Construction of \cite{GT2026} with nexus 1}

Using our main result we recover the Neumaier graphs from \cite{GT2026} with nexus 1. However, we note that in \cite{GT2026}, also Neumaier graphs of other nexuses are constructed.
\begin{exa}[cf.~\cite{GT2026}]\label{exa:Tan}
    Let $q\ge q'\ge 3$ be two prime powers such that $q=1+n(q'-1)$, and let $\a$ and $\a'$ be primitive elements of $\F_q$ and $\F_{q'}$. Define $\xi:\F_q^*\to \F_{q'}^*$ through $\xi(\a)=\a'$. By Lemma~\ref{lema:-1}, the map $\xi:\F_q^*\to \F_{q'}^*$ is suitable for Theorem~\ref{thm:general} if and only if $q'$ is even or $n$ is odd. If $b=q'$ (with $b$ as in Theorem~\ref{thm:general}), then we get a Neumaier graph $\overline \Gamma(\F_{q'},q,\xi)$.
\end{exa}
This is equivalent to the restriction of \cite[Theorem 3.9]{GT2026} to the case where the nexus $n_1$ is equal to 1. Indeed, with the notation of \cite{GT2026} in this case $q_1=q'$, $q_2=q$ and $n_2=n$. The requirement $q_1n_1\equiv_2 q_2n_2$ from \cite[Theorem 3.9]{GT2026} is now equivalent to the requirement of $q'$ being even or $n$ being odd from Example~\ref{exa:Tan}. Moreover, using Lemma~\ref{lema:b cyclotomy} it can be seen that the requirement in \cite[Equation (7)]{GT2026} is equivalent to the requirement $b=q'$ of Example~\ref{exa:Tan}. In particular, all examples of \cite{GT2026} with nexus 1 are covered by Theorem~\ref{thm:general}.

\section{A construction from the norm function on finite fields}\label{sec:norm}

In this section we present our first new infinite construction of Neumaier graphs (Theorem~\ref{thm:norm}), and prove that it produces infinitely many strictly Neumaier graphs (Theorem~\ref{thm:norm infinite}). We also show that this family of Neumaier graphs includes the collinearity graphs of generalized quadrangles of $*$-Tits type (Example~\ref{exa:d=2}).

The construction involves taking (with the notation of Theorem~\ref{thm:general}) for $R$ a subfield of $\F_q$, and for $\xi$ the norm function of this extension of finite fields. Recall for a prime power $q$ and a positive integer $d$ that the field $\F_q$ is a subfield of $\F_{q^d}$, and that the \emph{norm} with respect to this extension of finite fields is defined as
$$N_q^d:\F_{q^d}\to\F_q ,\quad x \mapsto \prod_{i=0}^{d-1} x^{q^i}.$$

\begin{thm}\label{thm:norm}
    Let $q$ be a prime power and $d$ a positive integer such that $q$ is even or $d$ is odd. Then $\overline \Gamma(\F_q,q^d,N_q^d)$ is a Neumaier graph with parameters
    $$\big (b_q^d\cdot q^d\!,\ b_q^d+q^d-2,\ b_q^d-2;\ 1,\ b_q^d\big ),$$
    where
    $$b_q^d=|\{x\in \F_{q^d} : N_q^d(x-1)=N_q^d(x)-1\}|.$$
    If $b_q^d>q$, then $\overline \Gamma (\Fq,q^d,N_q^d)$ is a strictly Neumaier graph.
\end{thm}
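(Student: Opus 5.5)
The plan is to obtain everything from Theorem~\ref{thm:general} with $R=\F_q$, with the field $\F_{q^d}$ in the role of ``$\F_q$'' there, and with $\xi=N_q^d$. The only non-formal input is the divisibility $q\mid b_q^d$, which I intend to get from the Ax--Katz theorem.

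\emph{Step 1: $\xi$ is admissible.} The norm $N_q^d$ is multiplicative, so its restriction is a group homomorphism $\F_{q^d}^*\to\F_q^*$, and $N_q^d(0)=0$ matches the extension convention of Theorem~\ref{thm:general}. We have $N_q^d(-1)=(-1)^{1+q+\dots+q^{d-1}}$. If $q$ is even then $-1=1$ and the condition is trivial. If $q$ is odd then the exponent $1+q+\dots+q^{d-1}$ is a sum of $d$ odd numbers, so it has the parity of $d$; since $d$ is odd by hypothesis, $N_q^d(-1)=-1$. Thus the hypotheses of Theorem~\ref{thm:general} hold. With $\psi(x)=N_q^d(x)-N_q^d(x-1)$, the set $B=\psi^{-1}(1)$ is exactly the set defining $b_q^d$, so $b=b_q^d$.

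\emph{Step 2: $q\mid b_q^d$.} For $d=1$ the norm is the identity, so $\psi\equiv 1$ and $b=q$. For $d\ge2$, fix an $\F_q$-basis of $\F_{q^d}$ and write $x=\sum_i x_i\omega_i$. Then $N_q^d(x)=\prod_j\sigma^j(x)$, with $\sigma$ the Frobenius $y\mapsto y^q$, is a homogeneous polynomial of degree $d$ in $x_1,\dots,x_d$ with coefficients in $\F_q$. Since
$$N_q^d(x-1)=\prod_j(\sigma^j(x)-1),$$
the degree-$d$ part of $N_q^d(x-1)$ equals $N_q^d(x)$. Hence $f(x_1,\dots,x_d):=N_q^d(x)-N_q^d(x-1)-1$ is a polynomial over $\F_q$ of degree at most $d-1$ in $d$ variables, and $b_q^d$ is its number of zeros in $\F_q^d$. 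The Ax--Katz theorem says the number of zeros is divisible by $q^{\lceil d/\deg f\rceil-1}$, and $\lceil d/(d-1)\rceil-1\ge1$, so $q\mid b_q^d$. If $f$ happens to be constant, it is nonzero since $x=0$ is a zero, so there is no solution issue there. This is the main step, and the one point to double-check is that Ax--Katz is applied with $q$ (not just the characteristic $p$) as the base.

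\emph{Step 3: conclusion.} By part~2 of Theorem~\ref{thm:general}, with $t=b_q^d/q$, the graph $\overline\Gamma(\F_q,q^d,N_q^d)$ is Neumaier with parameters $(b_q^d q^d,\,b_q^d+q^d-2,\,b_q^d-2;\,1,\,b_q^d)$. For strictness, part~4 does not apply, since $R$ is a field and the norm is surjective. Instead I use part~3: if $b_q^d>q$ then $t>1$, so the graph is not strongly regular and hence is strictly Neumaier.
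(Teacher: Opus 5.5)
Your proposal is correct and follows essentially the paper's route: check $N_q^d(-1)=-1$, obtain $q\mid b_q^d$ from Ax--Katz applied to the degree-$\le d-1$ polynomial $N_q^d(x)-N_q^d(x-1)-1$, and get strictness from part~3 of Theorem~\ref{thm:general} via $t=b_q^d/q>1$. The only difference is how the polynomial over $\F_q$ is obtained. You assert that the norm form has coefficients in $\F_q$, which needs the one-line Galois argument that Frobenius on coefficients cyclically permutes the linear factors. The paper sidesteps this by passing to the $\F_q$-component $f_1$. One small slip: since $f(0)=0$, a constant $f$ would be identically zero (giving $q^d$ zeros) rather than nonzero, but that case is harmless either way.
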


\begin{table}[ht]
    \begin{center}
        \begin{tabular}{|c|ccccc}
        \hline
            \diagbox{$q$}{$d$} & 1 & 2 & 3 & 4 & 5\\
            \hline
            2 & 2 & 2 & 2 & 2 & 2\\
            3 & 3 & 3 & 9 & 21 & 63\\
            4 & 4 & 4 & 20 & 52 & 244\\
            5 & 5 & 5 & 20 & 125 & 575\\
            7 & 7 & 7 & 56 & 350 & 2\,254\\
            8 & 8 & 8 & 56 & 440 & 3\,992\\
            9 & 9 & 9 & 81 & 711 & 6\,723 \\
            11 & 11 & 11 & 110 & 1\,331 & 14\,762\\
            13 & 13 & 13 & 182 & 2\,145 & 28\,054\\
            16 & 16 & 16 & 272 & 3\,856 & 66\,064\\
        \end{tabular}
    \end{center}
    \caption{Values of $b^d_q$ for $q\le 16$ and $d\le 5$.}
    \label{tab:b^d_q}
\end{table}

\begin{table}[ht]
    \begin{center}
        \begin{tabular}{rr|r||rrrrr}
            $q$ & $d$ & $b_q^d$ & $v$ & $k$ & $\lambda$ & $e$ & $s$\\
            \hline
            \hline
            3 & 3 & 9 & 243 & 34 & 7 & 1 & 9 \\
            \hline
            4 & 3 & 20 & 1\,280 & 82 & 18 & 1 & 20\\
            \hline
            5 & 3 & 20 & 2\,500 & 143 & 18 & 1 & 20\\
            \hline
            4 & 4 & 52 & 13\,312 & 306 & 50 & 1 & 52\\
            \hline
            3 & 5 & 63 & 15\,309 & 304 & 61 & 1 & 63\\
            \hline
            7 & 3 & 56 & 19\,208 & 397 & 54 & 1 & 56\\
            \hline
            8 & 3 & 56 & 28\,672 & 566 & 54 & 1 & 56\\
            \hline
            9 & 3 & 81 & 59\,049 & 808 & 79 & 1 & 81\\
            \hline
            11 & 3 & 110 & 146\,410 & 1\,439 & 108 & 1 & 110\\
            \hline
            4 & 5 & 244 & 249\,856 & 1\,266 & 242 & 1 & 244\\
            \hline
            13 & 3 & 182 & 399\,854 & 2\,377 & 180 & 1 & 182\\
            \hline
            16 & 3 & 272 & 1\,114\,112 & 4\,366 & 270 & 1 & 272\\
            \hline
            3 & 7 & 549 & 1\,200\,663 & 2\,734 & 547 & 1 & 549\\
            \hline
            17 & 3 & 272 & 1\,336\,336 & 5\,183 & 270 & 1 & 272\\
            \hline
            5 & 5 & 575 & 1\,796\,875 & 3\,698 & 573 & 1 & 575\\
            \hline
            8 & 4 & 440 & 1\,802\,240 & 4\,534 & 438 & 1 & 440\\
        \end{tabular}
    \end{center}
    \caption{The parameters of some strictly Neumaier graphs from Theorem~\ref{thm:norm} with at most two million vertices.}
    \label{tab:norm}
\end{table}

See Table~\ref{tab:b^d_q} for some small values of $b^d_q$. To prove Theorem~\ref{thm:norm}, we use a special case of the Ax-Katz Theorem \cite{Katz} (also see {\cite[Theorem 0]{MorenoMoreno}} for $r=1$ and $\mu\ge 1$).

\begin{thm}[Special case of the Ax-Katz Theorem \cite{Katz}]\label{thm:Ax-Katz}
    Let $f$ be a polynomial in $d$ variables with coefficients in $\F_q$ of degree less than $d$. Then the number of zeroes of $f$ in $\F_q^d$ is divisible by $q$.
\end{thm}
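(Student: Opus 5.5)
The plan is to follow Ax's original $p$-adic argument: express the zero count of $f$ through additive characters, expand those characters in Teichmüller powers with Gauss-sum coefficients, and bound valuations via Stickelberger's theorem. Write $q=p^r$ and let $\mathcal O$ be the ring of integers of $\mathbb{Q}_p(\zeta_p,\zeta_{q-1})$. Normalize the valuation $v$ on $\mathcal O$ so that $v(q)=1$. Fix a nontrivial additive character $\psi:\F_q\to\mathcal O^*$ and let $\omega$ be the Teichmüller character, extended by $\omega(0)=0$.

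\textbf{Step 1 (character sum).} Introduce an extra variable $x_0$ and put $F(x_0,\dots,x_d)=x_0f(x_1,\dots,x_d)$. By orthogonality of additive characters,
$$qN=\sum_{x\in\F_q^{d+1}}\psi(F(x)),$$
where $N$ is the number of zeroes of $f$. Every monomial of $F$ has $x_0$-degree exactly $1$ and total degree at most $\deg f+1\le d$. It therefore suffices to show $v\bigl(\sum_x\psi(F(x))\bigr)\ge 2$.

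\textbf{Step 2 (expansion).} Invert the Gauss sums to write, for every $a\in\F_q$,
$$\psi(a)=\sum_{u=0}^{q-1}c_u\,\omega(a)^u,$$
where $c_0=1$, $c_u=g(\omega^{-u})/(q-1)$ for $1\le u\le q-2$, and $c_{q-1}$ is chosen to fix $a=0$. Stickelberger's theorem gives $v(c_u)=\sigma(u)/(r(p-1))$, with $\sigma$ the base-$p$ digit sum; this is also valid for $c_{q-1}$. Write $F=\sum_{j=1}^m a_jx^{e_j}$ and expand $\psi(F(x))=\prod_j\psi(a_jx^{e_j})$. Summing over $x$ coordinatewise reduces everything to sums $\sum_{y\in\F_q}\omega(y)^n$. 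Such a sum vanishes unless $n=0$, when it equals $q$, or $n$ is a positive multiple of $q-1$, when it equals $q-1$.

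So $\sum_x\psi(F(x))$ is a sum over tuples $u=(u_j)$ of
$$\prod_jc_{u_j}\,\omega(a_j)^{u_j}\prod_{i=0}^d S_i,\qquad S_i=\sum_{y\in\F_q}\omega(y)^{\sum_ju_je_{j,i}}.$$
Let $k$ be the number of indices $i$ with $\sum_ju_je_{j,i}=0$. Each such index gives a factor $q$, while each remaining index forces $\sum_ju_je_{j,i}\ge q-1$.

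\textbf{Step 3 (valuation bound, the main obstacle).} One must show every surviving term has valuation at least $2$. The crude bound $\sigma(u)\ge r(p-1)u/(q-1)$ is false. Instead I will use the standard averaging trick: $\sigma(u)$ is invariant under multiplication by $p$ modulo $q-1$, and $\sigma(u)/(p-1)=\sum_{s=0}^{r-1}\{p^su/(q-1)\}$ in terms of fractional parts. Replacing $u$ by $p^su$ keeps all the divisibility constraints. For each $s$, the constraint for each non-vanishing coordinate $i$ gives $\sum_j\{p^su_j/(q-1)\}e_{j,i}\ge 1$ whenever that coordinate is "active".

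Summing over active coordinates and using $\sum_ie_{j,i}\le d$ yields $d\cdot\sum_j\{p^su_j/(q-1)\}\ge d+1-k$. Since $x_0$ appears to degree one, coordinate $0$ is always active unless the term is killed. Hence $\sum_j\{\cdot\}\ge 1+(1-k)/d$, i.e.\ $>1-k/d$ after accounting. The key rounding point is that this quantity is a sum of $r$ contributions, and the total valuation is $k+\frac1r\sum_s\sum_j\{p^su_j/(q-1)\}$.

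One must then squeeze out the integer $\lceil\cdot\rceil$ to reach $2$. The route I will try first is Ax's lemma: for each fixed $s$, the $x_0$-constraint gives $\sum_j\{p^su_j/(q-1)\}$ as an integer plus a contribution. Combining this with the strict inequality $>1$ for $k=0$, integrality of $\sum_j u_j/(q-1)$ along $x_0$ forces $\ge 2$. Cases with $k\ge1$ are easier, since the factor $q^k$ supplies the needed valuation.

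I expect this integrality/rounding step to be the delicate part of the argument.

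\textbf{Step 4.} Conclude that $v(qN)\ge 2$, hence $v(N)\ge 1$. Since $N\in\Z$, this means $q\mid N$.
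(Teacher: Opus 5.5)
The paper does not prove this statement. It quotes it as a special case of the Ax--Katz theorem from \cite{Katz} and uses it only once, for the single polynomial $g$ in Lemma~\ref{lema:norm}.

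Your plan instead reproduces Ax's original argument: additive characters, the Teichm\"uller expansion, and Stickelberger's theorem. This is a sound, self-contained route. For one polynomial, Ax's theorem already gives $q\mid N$, so Katz's extension to systems is not needed. Your count also gives more: if you replace $\sum_i e_{j,i}\le d$ by $\sum_{i\ge1}e_{j,i}\le\deg f$, the same computation yields Ax's bound $q^{\lceil d/\deg f\rceil-1}\mid N$. The price is reliance on Stickelberger and exact digit-sum bookkeeping. Steps 1, 2 and 4 are correct as written, including $c_0=1$, $c_{q-1}=-q/(q-1)$ and $v(c_{q-1})=1=\sigma(q-1)/(r(p-1))$.

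Step 3 contains one actual error and one loosely stated step; both need repair before the argument is a proof.

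\textbf{The error.} With the ordinary fractional part, your identity $\sigma(u)/(p-1)=\sum_s\{p^su/(q-1)\}$ fails for $u=q-1$: the left side is $r$ and the right side is $0$. The per-coordinate bound fails for the same reason. Take $u_{j_0}=q-1$ and all other $u_j=0$. This term survives, yet $\sum_j\{p^su_j/(q-1)\}e_{j,i}=0$ on every active coordinate. Since you kept $c_{q-1}$ in the expansion, this case cannot be ignored.

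\textbf{The fix.} Let $u_j^{(s)}$ be the residue of $p^su_j$ modulo $q-1$, taken in $\{1,\dots,q-1\}$ when $u_j\neq0$, and put $u_j^{(s)}=0$ when $u_j=0$. Then the following hold:
\begin{itemize}
\item $\sigma(u_j)/(p-1)=\sum_s u_j^{(s)}/(q-1)$ for all $0\le u_j\le q-1$.
\item $n_i^{(s)}:=\sum_j u_j^{(s)}e_{j,i}$ satisfies $n_i^{(s)}\equiv p^sn_i\equiv0\pmod{q-1}$.
\item $n_i^{(s)}$ is positive exactly when $n_i>0$, so $n_i^{(s)}\ge q-1$ on every active coordinate.
\end{itemize}

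\textbf{The rounding step.} State it as follows. For each $s$, set $T_s:=\sum_j u_j^{(s)}/(q-1)$. Because every monomial of $F$ has $x_0$-degree $1$, $T_s=n_0^{(s)}/(q-1)$. So $T_s$ is an integer, by the $x_0$-constraint applied to the rotated tuple for every $s$, not only $s=0$. Summing the active-coordinate bounds and using $\deg x^{e_j}\le d$ gives $T_s\ge(d+1-k)/d$. Hence:
\begin{itemize}
\item $T_s\ge2$ if $k=0$;
\item $T_s\ge1$ if $1\le k\le d$;
\item the all-zero tuple contributes $q^{d+1}$.
\end{itemize}
Each term then has valuation $k+\frac1r\sum_sT_s\ge2$.

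Your remark that for $k\ge1$ ``the factor $q^k$ supplies the needed valuation'' is not enough when $k=1$. There you also need $T_s\ge1$. This holds because coordinate $0$ is active as soon as some $u_j\ne0$.
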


\begin{lema}\label{lema:norm}
    Let $q$ be a prime power and $d$ a positive integer. Let $\psi_q^d:\F_{q^d}\to \F_q$ be defined as $x\mapsto N_q^d(x)-N_q^d(x-1)$. Then $|(\psi^d_q)^{-1}(r)|$ is divisible by $q$ for every $r \in \Fq$.
\end{lema}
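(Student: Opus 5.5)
We write the fibres of $\psi_q^d$ as zero sets of polynomials over $\F_q$ in $d$ variables and then apply the Ax--Katz special case, Theorem~\ref{thm:Ax-Katz}. Fix an $\F_q$-basis $\omega_1,\dots,\omega_d$ of $\F_{q^d}$ and write $x=\sum_i x_i\omega_i$ with $x_i\in\F_q$. The norm $N_q^d(x)=\prod_{j=0}^{d-1}\bigl(\sum_i x_i\omega_i^{q^j}\bigr)$ is then a homogeneous polynomial $P(x_1,\dots,x_d)$ of degree $d$. Its coefficients lie in $\F_q$, because $P$ is invariant under the Frobenius, which permutes the factors; equivalently, it is the determinant of the multiplication-by-$x$ matrix. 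So $N_q^d$ is given by a form $P\in\F_q[x_1,\dots,x_d]$ of degree exactly $d$ (or at most $d$, which is enough here).

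Next, write $1=\sum_i c_i\omega_i$. Then $N_q^d(x-1)=P(x-c)$, and
\[
\psi_q^d(x)=P(x)-P(x-c).
\]
The degree-$d$ homogeneous parts of $P(x)$ and $P(x-c)$ coincide, so they cancel, and $\psi_q^d$ is given by a polynomial $Q\in\F_q[x_1,\dots,x_d]$ of degree at most $d-1$. For $r\in\F_q$ the fibre $(\psi_q^d)^{-1}(r)$ is in bijection with the zero set in $\F_q^d$ of $f_r=Q-r$, which still has degree less than $d$ when $d\ge 2$. Theorem~\ref{thm:Ax-Katz} then gives $q\mid |(\psi_q^d)^{-1}(r)|$.

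Two edge cases need separate checks. First, if $f_r$ is a constant (for example $Q$ constant), then the zero set is either empty or all of $\F_q^d$, and both sizes are divisible by $q$. This is consistent with Ax--Katz, whose convention covers degree $0$, but it is worth stating explicitly. Second, if $d=1$, then $N_q^1$ is the identity and $\psi\equiv 1$, so the fibre sizes are $q$ and $0$, both divisible by $q$.

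The only step needing real care is justifying that the norm is a polynomial with coefficients in $\F_q$ whose top-degree part is translation invariant. Using the determinant description $N(x)=\det(M_x)$, with $M_x$ linear in the coordinates $x_i$, handles this cleanly.
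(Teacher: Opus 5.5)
Your proof is correct and follows the paper's argument. You express the norm in coordinates as a degree-$d$ form over $\F_q$, note that subtracting its translate by the coordinate vector of $1$ cancels the top-degree part, and apply Theorem~\ref{thm:Ax-Katz} to $Q-r$; the only cosmetic difference is that you show directly (via Frobenius invariance or $\det M_x$) that the norm form itself has coefficients in $\F_q$, whereas the paper takes $\alpha_1=1$ and works with the $\F_q$-component $f_1$, which merely agrees with the norm on $\F_q^d$.
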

\begin{proof}
    We use a proof technique from \cite{MorenoMoreno} that uses a basis $\a_1=1,\a_2,\dots,\a_d$ of $\F_{q^d}$ seen as a vector space over $\Fq$. For $x\in \F_{q^d}$, write $x=x_1+x_2\a_2+\dots+x_d\a_d$ with $x_1,\dots,x_d\in \F_q$. Using the fact that $x\mapsto x^q$ is a field automorphism of $\F_{q^d}$ fixing every element of $\Fq$, we have
    \begin{align*}
        N_q^d(x) &= \prod_{i=0}^{d-1} x^{q^i}=\prod_{i=0}^{d-1} \big(x_1+x_2\a_2+\dots+x_d\a_d \big )^{q^i}\\
        &=\prod_{i=0}^{d-1} \left (x_1^{q^i} +x_2^{q^i}\a_2^{q^i}+\dots+x_d^{q^i}\a_d^{q^i}\right )=\prod_{i=0}^{d-1}\left ( x_1 +x_2\a_2^{q^i}+\dots+x_d\a_d^{q^i}\right ).
    \end{align*}
    Write $f$ for this homogeneous $d$-variate polynomial of degree $d$ with coefficients in $\F_{q^d}$. By writing these coefficients in terms of the basis $\{\a_1=1,\a_2,\dots,\a_d\}$, we obtain homogeneous $d$-variate polynomials $f_1,\dots,f_d$ of degree $d$ with coefficients in $\Fq$ such that
    $$f=f_1+\a_2f_2+\dots+\a_df_d.$$
    Since the norm of any element of $\F_{q^d}$ is in $\F_q$, we have that
    $$f(x_1, x_2, \ldots, x_d) = f_1(x_1,x_2,\dots,x_d)$$
    for every choice of $x_1,\dots,x_d \in \F_q$.

    Now consider the polynomial
    $$g(x_1,\dots,x_d)=f_1(x_1,x_2,\dots,x_d)-f_1(x_1-1,x_2,\dots,x_d)-r.$$
    Note that
    $$g(x_1,x_2,\dots,x_d)=\psi_q^d(x_1+x_2\a_2+\dots+x_d\a_d)-r$$ for all $x_1, \ldots, x_d \in \F_q$.
    Furthermore, since $f_1$ is a homogeneous polynomial of degree $d$, the homogeneous part of degree $d$ of $f_1(x_1-1,x_2,\dots,x_d)$ is equal to $f_1(x_1,x_2,\dots,x_d)$, and so the degree of $g$ is at most $d-1$.
    
    From the Ax-Katz Theorem (Theorem~\ref{thm:Ax-Katz}) it follows that the number of zeroes of $g$ (i.e.\ the number of $x\in \F_{q^d}$ with $\psi_q^d(x)=r$) is divisible by $q$.
\end{proof}

We can now prove Theorem~\ref{thm:norm}.

\begin{proof}[Proof of Theorem~\ref{thm:norm}]
    Note that $N_q ^d$ is a multiplicative function sending 0 to 0 and 1 to 1. Furthermore, if $q$ is even then $N_q^d(-1)=N_q^d(1)=1=-1$, and if $d$ is odd then $N_q^d(-1)=(-1)^d=-1$. Next, $b_q^d=|(\psi_q^d)^{-1}(1)|$ is divisible by $q$ by Lemma~\ref{lema:norm}. The result now follows from Theorem~\ref{thm:general}, where the strictness follows from $t=b_q^d/q$; if $b_q^d>q$, then $t>1$, and the graph is a strictly Neumaier graph.
\end{proof}

We now prove that infinitely many graphs from Theorem~\ref{thm:norm} are not strongly regular. We do this by investigating the parity of the parameter $b_q^d$, in a similar way as is done in the proof of \cite[Theorem 5.1]{ACDKZ2023} (also see the proof of Lemma~\ref{lema:3}).

\begin{thm}\label{thm:norm infinite}
    Let $q$ be a prime power and $d$ a positive integer such that $q$ is even or $d$ is odd. Then $b_q^d$ is even if and only if either $q$ is even, or $q$ is odd and $d\not \equiv_{\omega(q)} 1$, where $\omega(q)$ is the order of $2$ in $\Fq^*$.
    
    Consequently, if $q$ is an odd prime power, and $d$ an odd positive integer with $d \not \equiv_{\omega(q)} 1$, then $b_q^d>q$ and so the graph $\overline \Gamma(\Fq,q^d,N_q^d)$ is a strictly Neumaier graph. In particular, infinitely many graphs from Theorem~\ref{thm:norm} are strictly Neumaier graphs.
\end{thm}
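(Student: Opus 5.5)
We determine the parity of $b_q^d=|\{x\in\F_{q^d}: N_q^d(x)-N_q^d(x-1)=1\}|$ with an involution whose fixed points we can count. The map $\sigma:x\mapsto 1-x$ preserves this set: if $N(x)-N(x-1)=1$, then $N(1-x)-N(-x)=(-1)^d\big(N(x-1)-N(x)\big)=(-1)^{d+1}$, which equals $1$ when $d$ is odd or $q$ is even. So $B$ is a union of $\sigma$-orbits of size $1$ or $2$, and $b_q^d\equiv_2|\mathrm{Fix}(\sigma)\cap B|$.

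\emph{Case $q$ even.} Then $\sigma(x)=x+1$ has no fixed points in characteristic $2$, since $1\neq 0$. Hence $b_q^d$ is even.

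\emph{Case $q$ odd.} Then $d$ is odd, and the only fixed point of $\sigma$ is $x=1/2$. It lies in $B$ exactly when $N(1/2)-N(-1/2)=1$. Since $d$ is odd, $N(-1/2)=-N(1/2)$, so the condition becomes $2N(1/2)=1$. As $1/2\in\F_q$, we have $N(1/2)=2^{-d}$, and the condition reads $2^{1-d}=1$ in $\F_q$, i.e.\ $d\equiv_{\omega(q)}1$. So $b_q^d$ is odd iff $d\equiv_{\omega(q)}1$. This proves the equivalence, with all equalities taking place in $\F_q$.

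\emph{Consequence.} Let $q$ be odd, $d$ odd and $d\not\equiv_{\omega(q)}1$. Then $b_q^d$ is even, and by Lemma~\ref{lema:norm} it is a multiple of $q$; since $q$ is odd, $2q\mid b_q^d$. We also need $b_q^d>0$, which holds because $x=0$ and $x=1$ lie in $B$. Hence $b_q^d\ge 2q>q$, and Theorem~\ref{thm:norm} shows the graph is strictly Neumaier.

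\emph{Infinitude.} Fix an odd prime power $q$ with $\omega(q)\ge 2$, for example $q=3$, where $\omega(3)=2$. Take $d$ odd with $d\not\equiv_{\omega(q)}1$. For $\omega(q)=2$ every odd $d$ is $\equiv 1$, so this choice fails and we must use an $\omega(q)$ that is divisible by an odd prime or is at least $3$. For instance, $q=7$ has $\omega(7)=3$, and any odd $d\equiv_3 0$ or $d\equiv_3 2$ works, e.g.\ $d=3$, giving $b=56$, consistent with Table~\ref{tab:norm}. This yields infinitely many $d$ and therefore infinitely many graphs.

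The main obstacle is only choosing $q$ so that $\omega(q)$ is not a power of $2$, which guarantees odd residues $\not\equiv 1$. Every other step is routine.
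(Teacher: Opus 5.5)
Your proposal is correct and is essentially the paper's proof: the involution $x\mapsto 1-x$ on $B$, whose only possible fixed point $1/2$ lies in $B$ exactly when $2^{d-1}=1$ in $\F_q$, followed by an explicit infinite family (you use $q=7$ where the paper uses $q=5$). Your justification of $b_q^d>q$ via $2q\mid b_q^d$ and $b_q^d\ge 2$ spells out what the paper leaves implicit; note only that your closing remark overstates the constraint on $q$, since any $\omega(q)\ge 3$ suffices (e.g.\ $\omega(5)=4$, a power of $2$, works for all $d\equiv_4 3$).
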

\begin{proof}
    Write $B$ for the set of elements $x$ of $\F_{q^d}$ such that $N_q^d(x-1)=N_q^d(x)-1$, so that $|B|=b_q^d$. Consider the involution $\varphi:\F_{q^d} \to \F_{q^d}$ given by $x\mapsto 1-x$. If $x\in B$, then
    $$N_q^d(\varphi(x)-1)=N_q^d(-x)=-N_q^d(x)=-N_q^d(x-1)-1=N_q^d(\varphi(x))-1,$$
    so that $\varphi(x)\in B$. Since $\varphi(\varphi(x))=x$ for all $x$, the parity of $b_q^d$ is equal to that of the number of fixed points of $\varphi$ in $B$.

    Note that $a$ is a fixed point of $\varphi$ if and only if $1=2a$. In particular, if $q$ is even, then $\varphi$ has no fixed points at all, and so $b_q^d$ is even. If $q$ is odd, then $B$ has the fixed point $1/2$ if and only if $-N_q^d(1/2)=N_q^d(1/2)-1$, which is equivalent to $N_q^d(2)=2$. Since $2\in \Fq$, we have $N_q^d(2)=2^d$. We conclude that $B$ has a (necessarily unique) fixed point if and only if $2^d=2$, which holds if and only if $d\equiv_{\omega(q)}1$. The first part of the statement follows.

    If $q$ is an odd prime power and $d$ an odd integer such that $d \not \equiv_{\omega(q)} 1$, then we have that $b_q^d \not \equiv_2 q$. It is therefore impossible that $b_q^d=q$, so we have $b_q^d>q$. The second part of the statement now follows from Theorem~\ref{thm:norm}, where for the infinitude, take for example $q=5$, for which $\omega(q)=4$, providing a strictly Neumaier graph for every $d\equiv_4 3$.
\end{proof}

Theorem~\ref{thm:norm infinite} in fact implies that for all odd $d\ge 3$ there exists some $Q(d)$ such that for every odd prime power $q> Q(d)$ we have  $b_q^d>q$ and therefore  $\overline \Gamma(\F_q,q^d,N_q^d)$ is a strictly Neumaier graph. Indeed, we can take $Q(d)=2^d$: if $q> 2^d$ is odd, then $\omega(q)>d$ and so $d\not \equiv_{\omega(q)} 1$. Table~\ref{tab:b^d_q} suggests that the inequality $b_q^d>q$ even holds under much weaker assumptions. Perhaps this is true as soon as $q\ge 3$ and $d\ge 3$. At least, using the Lang--Weil type bound from~\cite[Theorem 1.10]{Slavov}, it is possible to lower $Q(d)$  considerably to $15(d-1)^{13/3}$, and furthermore this approach also works for even $d\ge 3$ and even $q\ge 3$. A proof can be found in Appendix~\ref{app:weil_bound}, see Theorem~\ref{thm:weil_bound}.

If $q=2$ then $\overline \Gamma(\F_2,2^d , N^d_2)$ is complete bipartite and if $d=1$ then $\overline \Gamma(\Fq,q,N^1_q)$ is a square lattice graph. For $d=2$, we have the following, where
$$\Tr_q^d : \F_{q^d} \to \F_q,\quad x \mapsto \sum_{i=0}^{d-1} x^{q^i}$$
is the trace map, which is $\F_q$-linear and surjective.

\begin{exa}\label{exa:d=2}
    If $d=2$, then $b_q^d=q$. Indeed, in this case $\psi(x)=\Tr_q^2(x)-1$, so that $|\psi^{-1}(r)|=q$ for every $r\in \Fq$. In order to apply Theorem~\ref{thm:norm}, we need $q$ to be even, say $q=2^e$. Then the graph $\overline \Gamma(\F_{2^e},4^{e},N^2_{2^e})$ is a strongly regular graph with parameters $(8^e,4^e+2^e-2,2^e-2,2^e+2)$. More precisely, it is isomorphic to the collinearity graph of the generalized quadrangle of $\ast$-Tits type of order $(2^e-1,2^e+1)$ coming from the classical hyperoval (see \cite[Section 3.7.2]{VMaldeghem}). We will give a detailed explanation of this graph isomorphism shortly. For all relevant definitions, we refer to \cite[Section 7.5.2]{VMaldeghem}.
    
    Consider the projective space $\PG(3,2^e)$ consisting of points $(X:Y:Z:W)$, which partitions into its set of affine points ($W=1$) $\AG(3,2^e)$, and its points at infinity, which we identify with $\PG(2,2^e)=\{(X:Y:Z:0)\}$. Let $\a\in \F_{4^e}\setminus \F_{2^e}$ and consider the absolutely irreducible conic $\mathcal C$ in $\PG(2,2^e)$ defined by $X^2+N_{2^e}^2(\a Y+Z)=0$, which expands to $X^2+N_{2^e}^2(\a) Y^2+\Tr_{2^e}^2(\a) YZ + Z^2=0$. The nucleus of $\mathcal C$ is easily verified to be $(1:0:0:0)$ (with the notation of $\PG(3,2^e) \supseteq \PG(2,2^e)$ as described above). We now construct the generalized quadrangle; take $\AG(3,2^e)$ as the points, and take those lines of $\AG(3,2^e)$ that, when extended to a line in $\PG(3,2^e)$, intersect $\PG(2,2^e)$ in either a point of $\mathcal C$ or the nucleus. Write $\Delta$ for the collinearity graph of this generalized quadrangle.

    To show that $\Delta$ and $\overline \Gamma(\F_{2^e},4^e,N_{2^e}^2)$ are isomorphic, consider the bijection $\varphi :\AG(3,2^e) \to \F_{2^e}\times \F_{4^e}$ by $(x:y:z:1) \mapsto (x^2,\a y+z)$. We partition the connecting set $S$ of $\overline \Gamma(\F_{2^e},4^e,N_{2^e}^2)$ (note that $t=1$ with $t$ as in Theorem~\ref{thm:general}) into $2^e+2$ subsets of size $2^e-1$: $S=S_0 \sqcup \bigsqcup _{U\in \mathcal U} S_U$, with $S_0=\F_{2^e}^*\times \{0\}$, $\mathcal U$ the set of 1-dimensional vector spaces over $\F_{2^e}$ that are a subspace of $\F_{4 ^e}$, and $S_U=\{(N_{2^e}^2(u),u) : u\in U\setminus \{0\}\}$. Now if $p=(x:y:z:1)\in \AG(3,2^e)$, then the set of $S_0$-neighbors of $\varphi(p)$ in $\overline \Gamma(\F_{2^e},4^e,N^2_{2^e})$ (including itself) corresponds to the subset
    $$\{\varphi^{-1}( \varphi(p)+s):s\in S_0 \text{ or }s=(0,0)\}=\{(x':y:z:1) : x'\in \F_{2^e}\}$$
    of $\AG(3,2^e)$, which defines a line in $\AG(3,2^e)$. When extended to $\PG(3,2^e)$, this line intersects $\PG(2,2^e)$ in the nucleus $(1:0:0:0)$ of $\mathcal C$. Similarly, for $U=\langle \a y_0+ z_0\rangle $, writing $x_0=\sqrt{N_{2^e}^2(\a y_0+z_0)}$, the set
    $$\{\varphi^{-1}(\varphi(p)+s):s\in S_{U} \text{ or }s=(0,0)\}=\left \{\left(x+\lambda x_0:y+\lambda y_0: z+\lambda z_0:1\right ) : \lambda \in \F_{2 ^e}\right \}$$
    is a line in $\AG(3,2^e)$ that, when extended to $\PG(3,2^e)$, intersects $\PG(2,2^e)$ in the point $(x_0:y_0:z_0:0)$ on the conic $\mathcal C$. It now follows that $\varphi$ is indeed an isomorphism of graphs.
\end{exa}

\section{Two more constructions}\label{sec:even more constructions}

In this section, we present yet two more families of Neumaier graphs, namely Theorems~\ref{thm:F4 eps^2} and \ref{thm:zeta6}. We prove that in both cases all of the resulting Neumaier graphs are in fact strictly Neumaier graphs, and furthermore that both families are infinite. Both of the constructions of this section are applications of Theorem~\ref{thm:general}, where $R$ is either the ring of dual numbers over $\F_4$ (that is, $R=\F_4[x]/(x^2)$), or a quotient ring of the Eisenstein integers. The choice of these rings may seem rather arbitrary, but it is explained by the following theorem.
\begin{thm}\label{thm:choices for R}
    Let $\overline\Gamma (R,q,\xi)$ be a strictly Neumaier graph obtained from Theorem~\ref{thm:general} using a ring $R$ and a group homomorphism $\xi:\Fq^* \to R^*$. Suppose that $|\xi(\Fq^*)|\le 7$. Then, up to isomorphism, $R$ may be chosen to be one of the following:
    \begin{itemize}
        \item a quotient ring of the ring of integers;
        \item a finite field;
        \item the ring of dual numbers over $\F_4$;
        \item a quotient ring $\Z[\zeta_6]/(a\zeta_6+b)$ for some integers $0\le a \le b$ with $a\not \equiv_3 b$.
    \end{itemize}
\end{thm}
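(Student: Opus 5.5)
The first step is to replace $R$ by the subring $R'$ generated by $G=\xi(\Fq^*)=\langle\rho\rangle_{R^*}$, i.e.\ $R'=\Z[\rho]\subseteq R$. This is harmless for the Neumaier graph: $\psi$ takes values in $R'$, and $b$ is unchanged. The graph $\Gamma(R,q,\xi)$ is a disjoint union of $|R|/|R'|$ copies of $\Gamma(R',q,\xi)$, since the connecting set lies in $R'\times\Fq$. The Evans--Goryainov--Konstantinova--Mednykh construction of Theorem~\ref{thm:EvansMachine}, applied to $t=b/|R|$ copies of $\Gamma(R,q,\xi)$, can be rewritten as the construction applied to $b/|R'|$ copies of $\Gamma(R',q,\xi)$. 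We simply regroup the components, with the permutations induced on the coclique partitions, which are indexed by $\Fq$ in both cases. Strictness is preserved because strictness is a property of the graph itself. So without loss of generality $R=\Z[\rho]$ is a commutative quotient of $\Z[x]$, where $\rho$ has multiplicative order $m=|G|\le 7$.

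Next we classify such rings. Since $\rho^m=1$, the ring $R$ is a quotient of $\Z[x]/(x^m-1)$. Since $R$ is finite, it is a quotient of $\Z_n[x]/(x^m-1)$ with $n=\Char(R)$. We then split according to the minimal polynomial behaviour of $\rho$, using $\xi(-1)=-1$ (Lemma~\ref{lema:-1}) and $\rho$ having order exactly $m$.

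If $\rho\in\Z_n\cdot 1$, then $R$ is a quotient of $\Z$. Otherwise the cyclotomic factors $\Phi_d$ with $d\mid m\le 7$ have degree at most $\varphi(d)\le 6$. The key observation is that for $m\in\{3,6\}$ (and $m=2$ reducing to integers), $\rho$ generates a quotient of $\Z[\zeta_3]=\Z[\zeta_6]$ or a product situation. For $m=4$ we get $\Z[i]$-quotients. For $m=5,7$ we get quotients of $\Z[\zeta_5]$ or $\Z[\zeta_7]$.

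For each case I would then prove that either the quotient is a field, or a quotient of $\Z$, or an Eisenstein quotient $\Z[\zeta_6]/(a\zeta_6+b)$, or $\F_4[\varepsilon]/(\varepsilon^2)$. Otherwise the graph is not strictly Neumaier or not Neumaier at all. Typical obstructions are that $\xi(-1)=-1$ fails, that $|R|\nmid b$ is forced, or that $b$ is forced small by Lemma~\ref{lema:b cyclotomy} together with Proposition~\ref{prp:cyclo}.

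The main obstacle is this exclusion step. Consider non-field, non-Eisenstein quotients such as $\Z[i]/(\pi^2)$, $\Z[\zeta_5]$ or $\Z[\zeta_7]$ modulo non-prime ideals, and mixed products $R_1\times R_2$ where $\rho$ has different orders in the components. For products, I would argue via the Chinese remainder theorem that if $\rho\mapsto(\rho_1,\rho_2)$ with both nontrivial, then $G\cap(G+1)$ is very restricted. In that case $b$ is determined by few cyclotomic numbers, and divisibility by $|R|$ fails or collapses, so that a smaller ring gives an isomorphic graph.

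For local non-reduced rings I would use the condition $-1\in G$ together with the requirement that $\rho-1$ be a unit often enough for $G\cap(G+1)$ to be nonempty. In characteristic $2$, order $m\le 7$ forces the unipotent part to be trivial or $m$ even with $\rho=\omega(1+\varepsilon)$. This leads to $\F_4[\varepsilon]$, and the dual numbers over $\F_2$ die because $G\cap(G+1)$ is too small. For the Eisenstein case, the condition $a\not\equiv_3 b$ would come from excluding the ideal $(1+\zeta_6)$ dividing, since then $-1\equiv1$ issues arise modulo the ramified prime. Finally, we reduce isomorphic choices of $(a,b)$ to $0\le a\le b$ using units and conjugation, as in Proposition~\ref{prop:eis}.
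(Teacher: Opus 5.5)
Your reduction to $\Z[\rho]\cong\Z[x]/I$ with $I=\ker(\ev_\rho)$, regrouping the copies in Theorem~\ref{thm:EvansMachine}, is correct. It is exactly the paper's first step (Lemmas~\ref{lema:ring symmetries} and~\ref{lema:ideal}).

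The gap is the classification itself. You flag it as the main obstacle but do not carry it out. The obstructions you list (``$|R|\nmid b$ is forced'', ``$b$ is forced small'') are never derived, and since cyclotomic numbers depend on $q$ it is unclear how they could exclude a ring for all $q$ at once. Your product heuristic is also false. In $\Z[\zeta_6]/(2\zeta_6+4)\cong\F_4\times\F_7$ with $\rho\mapsto(\beta,5)$, $\rho$ generates the whole ring, so nothing collapses to a smaller ring, yet it yields genuine strictly Neumaier graphs (Table~\ref{tab:zeta6}).

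The missing idea is a single polynomial relation. Since the graph is not $K_{q,q}$, we have $G\cap(G+1)\neq\emptyset$. Because $-1\in G$, this set equals $(-G)\cap(G+1)$, so $\rho^i+\rho^j+1=0$ for some $0\le i\le j<m$. Thus $I\supseteq(x^\ell+1,\,x^i+x^j+1)$, which leaves only finitely many ideals to inspect.
\begin{itemize}
\item In characteristic $2$, $\gcd(x^m+1,x^i+x^j+1)$ in $\F_2[x]$ with $m\le 7$ is $1$ or irreducible, except for $(i,j,m)=(2,4,6)$, where it is $(x^2+x+1)^2$. This gives a field or $\F_4[\varepsilon]/(\varepsilon^2)$.
\item In characteristic $\ge 3$, $m=2\ell$ is even, so your $\Z[\zeta_5]$ and $\Z[\zeta_7]$ cases only arise in characteristic $2$. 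Also $\ell\le 3$, and the ideal contains $1$, $2$, $3$ or a monic linear polynomial, except for $(\ell,i,j)=(3,2,4)$. There it equals $(x^2-x+1)$, which forces an Eisenstein quotient.
\end{itemize}
This relation is what disposes of rings like $\F_2\times\F_{16}$ ($m=5$) or $\F_7[x]/(x^3+1)\cong\F_7^3$ ($m=6$): a component of $\rho$ equal to $\pm1$ cannot satisfy it.

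Your outline also lacks Lemma~\ref{lema:3}, which is needed in two places. Your explanation of $a\not\equiv_3 b$ is wrong: modulo $(1+\zeta_6)$ the residue field is $\F_3$, where $-1\neq 1$, so no ``$-1\equiv 1$'' issue arises. The correct argument uses the order-$3$ map $x\mapsto 1-1/x$ acting on $B\setminus\{0,1\}$, which shows that $3\mid b$ forces $\Char(R)=3$, $3\nmid m$ and $m\neq 4$.
\begin{itemize}
\item This discards the ideals containing $3$ when $m\in\{4,6\}$.
\item If $a\equiv_3 b$, then $3\mid |R|\mid b$, so $m\mid 2$. Hence $x+1\in I$ and $R$ is a quotient of $\Z$, which is already on the list.
\end{itemize}
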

The first case is investigated in Example~\ref{exa:2023}, the second case is investigated in Examples~\ref{exa:const18 1} and \ref{exa:const18 2} and Theorem~\ref{thm:norm}, where we should emphasize that these investigations do not cover all possible applications of Theorem~\ref{thm:general}. In any case, the remaining two cases precisely correspond to the two new constructions presented in this section. The proof of Theorem~\ref{thm:choices for R} can be found in Section~\ref{sec:choices for R}, after the two new constructions.
\subsection{Ring of dual numbers over \texorpdfstring{$\F_4$}{}}
Here we present a construction of strictly Neumaier graphs involving the ring $R=\F_4[x]/(x^2)$ of dual numbers over $\F_4$. Note that we can equivalently write $R=\F_2[\beta,\e]$, where $\beta^2=\beta+1$ and $\e^2 =0$. After the construction, we show that this produces infinitely many strictly Neumaier graphs.

\begin{thm}\label{thm:F4 eps^2}
    Let $q$ be a prime power such that $q\equiv_{12} 1$, and let $\a$ be a primitive element of $\Fq$. Let $R= \F_2[\beta,\e]$ where $\beta^2=\beta+1$ and $\e^2=0$. Define $\xi:\Fq^* \to R^*$ through $\xi(\a)=\beta+\e$. Write $c=c_q^6(\a;2,4)$. If $c \equiv_{8} 7$, then $\overline \Gamma(\F_4[\beta,\e],q,\xi)$ is a strictly Neumaier graph with parameters $(2q(c+1),q+2c,2c;1,2(c+1))$.
\end{thm}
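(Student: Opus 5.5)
The plan is to apply Theorem~\ref{thm:general} with $R=\F_2[\beta,\e]$ and $\xi(\a)=\rho:=\beta+\e$. We then compute $b$ via Lemma~\ref{lema:b cyclotomy} and read off the parameters. Strictness will come for free from part 4 of Theorem~\ref{thm:general}, since $R$ has the nonzero nilpotent $\e$ and is therefore not a field.

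First, we check that $\xi$ is admissible. Since $R$ is commutative of characteristic $2$ and $\e^2=0$, the binomial expansion gives $\rho^n=\beta^n+n\beta^{n-1}\e$. Because $\beta$ has order $3$ in $\F_4^*$, we get $\rho^n=1$ if and only if $3\mid n$ and $n$ is even. So $\rho$ has order $6$, and in the notation of Lemma~\ref{lema:-1} we have $\ell=6$ (here $-1_R=1_R$). As $\Char(R)=2$ and $\ell$ is even, Lemma~\ref{lema:-1} requires exactly $q\equiv_{12}1$, which is our hypothesis. Hence $\xi$ is a well-defined homomorphism with $\xi(-1)=-1$, and $m=|G|=6$.

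Next, we list $G=\langle\rho\rangle$ explicitly, using $\beta^2=\beta+1$:
\[
1,\quad \rho=\beta+\e,\quad \rho^2=\beta^2,\quad \rho^3=1+\beta^2\e,\quad \rho^4=\beta,\quad \rho^5=\beta^2+\beta\e .
\]
Adding $1$ to each element gives $G+1=\{0,\ \beta^2+\e,\ \beta,\ \beta^2\e,\ \beta^2,\ \beta+\beta\e\}$. Comparing the two lists, $G\cap(G+1)=\{\beta,\beta^2\}$.
\begin{itemize}
\item For $r=\beta=\rho^4$ we have $r-1=\beta^2=\rho^2$.
\item For $r=\beta^2=\rho^2$ we have $r-1=\beta=\rho^4$.
\end{itemize}
Lemma~\ref{lema:b cyclotomy} therefore gives $b=2+c_q^6(\a;2,4)+c_q^6(\a;4,2)$. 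By Proposition~\ref{prp:cyclo}(1), $c_q^6(\a;4,2)=c_q^6(\a;-4,-2)=c_q^6(\a;2,4)$, so $b=2+2c=2(c+1)$. This computation of $G\cap(G+1)$ is the step needing the most care; an arithmetic slip in the powers of $\rho$ would change which cyclotomic numbers appear.

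Finally, we need $|R|=16$ to divide $b=2(c+1)$, which is equivalent to $c\equiv_8 7$. This holds by hypothesis and gives $t=(c+1)/8\ge1$. Theorem~\ref{thm:general}(2) then shows that $\overline\Gamma$ is a Neumaier graph with parameters
\[
(bq,\ b+q-2,\ b-2;\ 1,\ b)=\bigl(2q(c+1),\ q+2c,\ 2c;\ 1,\ 2(c+1)\bigr).
\]
Theorem~\ref{thm:general}(4) shows it is strictly Neumaier, since $R$ is not a field.
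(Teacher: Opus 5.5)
Your proof is correct and follows the paper's argument essentially step for step. Like the paper, you list the powers of $\beta+\varepsilon$, use Lemma~\ref{lema:-1} with $q\equiv_{12}1$, identify $G\cap(G+1)=\{\beta,\beta+1\}$ to get $b=2+2c$ via Lemma~\ref{lema:b cyclotomy} and Proposition~\ref{prp:cyclo}, and conclude with Theorem~\ref{thm:general} because $R$ is not a field. Your binomial formula $\rho^n=\beta^n+n\beta^{n-1}\varepsilon$ is a tidy way to certify that $\rho$ has order exactly $6$, which the paper leaves implicit in its list of powers.
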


\begin{proof}
    We have
    \begin{align*}
        (\beta+\e)^2 &= \beta+1,\\
        (\beta+\e)^3 &= \beta \e + \e + 1 \\ 
        (\beta+\e)^4 &=\beta,\\
        (\beta+\e)^5&= \beta\e + \beta+1,\\
        (\beta+\e)^6&=1.
    \end{align*}
    In view of $q\equiv_{12} 1$ and Lemma~\ref{lema:-1}, we have that $\xi$ is well-defined and $\xi(-1)=-1$. In Lemma~\ref{lema:b cyclotomy}, the group $G$ consists of the element $\beta+\e$ and the five elements listed above, and so $G\cap(G+1)=\{\beta,\beta+1\}$, so that $b=2+c_q^6(\a;2,4)+c_q^6(\a;4,2)$. By the first or second part of Proposition~\ref{prp:cyclo}, these two cyclotomic numbers are equal to $c$ and so $b=2c+2$. Now $b$ is a multiple of $|R| = 16$ if and only if $c\equiv_8 7$. Moreover, $R$ is clearly not a field. The result now follows from Theorem~\ref{thm:general}.
\end{proof}

\begin{table}[ht]
    \begin{center}
        \begin{tabular}{r|r||rrrrr}
            $q$ & $c_q^6(\a;2,4)$ & $v$ & $k$ & $\lambda$ & $e$ & $s$\\
            \hline
            \hline
            193 & 7 & 3\,088 & 207 & 14 & 1 & 16\\
        \hline
            229 & 7 & 3\,664 & 243 & 14 & 1 & 16\\
        \hline
            277 & 7 & 4\,432 & 291 & 14 & 1 & 16\\
        \hline
            313 & 7 & 5\,008 & 327 & 14 & 1 & 16\\
        \hline
            $361=19^2$ & 15 & 11\,552 & 391 & 30 & 1 & 32\\
        \hline
            421 & 7 & 6\,736 & 435 & 14 & 1 & 16\\
        \hline
            829 & 15 & 26\,528 & 859 & 30 & 1 & 32\\
        \end{tabular}
    \end{center}
    \caption{The prime powers $q\le 1000$ with $q\equiv_{12} 1$ and $c_q^6(\a;2,4)\equiv_8 7$, together with the parameters of the corresponding strictly Neumaier graphs from Theorem~\ref{thm:F4 eps^2}.}
    \label{tab:F4 eps^2}
\end{table}

Next we prove the infinitude of Theorem~\ref{thm:F4 eps^2}, similarly as it was done in \cite[Theorem 5.20]{ACDKZ2023}.

\begin{thm}\label{thm:F4 eps^2 infinite}
    There exist infinitely many primes $p\equiv_{12} 1$ for which $c=c_p^6(\a;2,4)\equiv_8 7$. In other words, Theorem~\ref{thm:F4 eps^2} gives infinitely many strictly Neumaier graphs.
\end{thm}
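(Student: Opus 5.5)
We combine the explicit formula of Lemma~\ref{lema:formulas specific cyclotomic numbers}(1) with the Eisenstein analogue of Dirichlet's theorem (Theorem~\ref{thm:Dirichlet}). Let $p\equiv_{12}1$ be prime, written as $p=x^2+3y^2$ with $x\equiv_3 2$ and $y\equiv_3 0$. Lemma~\ref{lema:formulas specific cyclotomic numbers}(1) gives
\[36c=(x+1)^2+3y^2 .\]
Since $\gcd(36,8)=4$, the condition $c\equiv_8 7$ is equivalent to $(x+1)^2+3y^2\equiv_{288} 36\cdot 7=252$. So it suffices to produce infinitely many primes $p=x^2+3y^2$ with $p\equiv_{12}1$, $x\equiv_3 2$, $y\equiv_3 0$, and prescribed residues of $x$ and $y$ modulo $2^5\cdot 3^2$ (or modulo a suitable power of $2$ and $3$) for which the congruence holds.

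\textbf{Step 1: find a witness.} Table~\ref{tab:F4 eps^2} shows that $p=193$ works. Here $193=x^2+3y^2$ with $x\equiv_3 2$ and $3\mid y$. We have $193=1^2+3\cdot 8^2$, but $8$ is not divisible by $3$, and $193=(\pm 11)^2+3\cdot 6^2$ is another representation. Taking $x=-1\equiv_3 2$ in the first form would require $3\mid y$, which fails. So we use $x=11$, $y=6$ (here $11\equiv_3 2$ and $3\mid 6$). Then $(x+1)^2+3y^2=144+108=252$, which gives $c=7$ and is consistent with the table.

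\textbf{Step 2: lift to infinitely many primes.} Set $\pi_0=x_0+y_0\sqrt{-3}$, written as an Eisenstein integer: $x_0+y_0\sqrt{-3}=(x_0-y_0)+2y_0\zeta_6$. Take the modulus $k=288$ in $\Eis$. Since $\pi_0$ has prime norm $193$, it is coprime to $288$. Theorem~\ref{thm:Dirichlet} then gives infinitely many Eisenstein primes $\pi\equiv_{288}\pi_0$. We must check that such a $\pi$ has the form $x+y\sqrt{-3}$, i.e. that its $\zeta_6$-coefficient is even. This holds because $\pi\equiv\pi_0$ modulo $2$ and the $\zeta_6$-coefficient $2y_0$ of $\pi_0$ is even. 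Writing $\pi=x+y\sqrt{-3}$ with $x,y\in\Z$, we have $x\equiv_{288}x_0$ and $2y\equiv_{288}2y_0$, so $y\equiv_{144}y_0$.

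\textbf{Step 3: check the congruences.} The norm $p=N(\pi)=x^2+3y^2$ is a rational prime: it is not a square, since $\pi_0$ is not associate to a rational prime and this persists for all but finitely many $\pi$, and inert primes have norm $p^2$. From $p\equiv_{12}193\equiv 1$ we get $p\equiv_{12}1$, and $x\equiv_3 2$, $3\mid y$ are inherited. The one real check is that $(x+1)^2+3y^2\equiv_{288}252$ survives when $y$ is known only modulo $144$. This holds because $y\mapsto y+144$ changes $3y^2$ by $3(288y+144^2)\equiv_{288}0$. Hence $36c\equiv_{288}252$, so $c\equiv_8 7$, and distinct primes $p$ give distinct graphs. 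I expect this bookkeeping (moving between $\sqrt{-3}$-coordinates and $\zeta_6$-coordinates while keeping enough precision modulo powers of $2$ and $3$) to be the main obstacle. The sign ambiguity of $y$ is harmless, since only $y^2$ enters the formula.
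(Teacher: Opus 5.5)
Your route is essentially the paper's, and the argument goes through once three slips are repaired. Both proofs seed a residue class of Eisenstein integers modulo a modulus divisible only by $2$ and $3$, apply Theorem~\ref{thm:Dirichlet}, and read off $c \bmod 8$ from Lemma~\ref{lema:formulas specific cyclotomic numbers}(1). The paper uses modulus $192$ and the class of $36\zeta_6+17$, i.e.\ $x\equiv_{96}35$, $y\equiv_{96}18$. Its norm $2197=13^3$ is not prime: only coprimality to the modulus matters, and only $x,y$ modulo $32$ and modulo $3$ are used. Your modulus $288$ carries the same information.

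\textbf{First slip: the witness.} You have $11^2+3\cdot 6^2=229$, not $193$. In fact $193=1^2+3\cdot 8^2$ is its only representation, so Lemma~\ref{lema:formulas specific cyclotomic numbers}(1) does not apply to $193$ at all. Your seed $\pi_0=5+12\zeta_6$ has norm $229$. This is prime, coprime to $288$, satisfies $229\equiv_{12}1$, and appears in Table~\ref{tab:F4 eps^2} with $c=7$. So replacing $193$ by $229$ throughout fixes Steps 1--3.

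\textbf{Second slip: the congruence for $x$.} From $\pi\equiv_{288}\pi_0$ you do not get $x\equiv_{288}x_0$. Writing $\pi=a\zeta_6+b$, you have $x=b+a/2$, and $a/2$ is known only modulo $144$, so only $x\equiv_{144}x_0$ holds. This is harmless: $(x+1)^2 \bmod 288$ depends only on $x \bmod 144$, by the same computation you did for $y$. Hence $(x+1)^2+3y^2\equiv_{288}252$ and $p\equiv_{288}229$ still follow.

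\textbf{Third slip: primality of $N(\pi)$.} Saying that ``not associate to a rational prime'' persists ``for all but finitely many $\pi$'' is not an argument. Argue as the paper does. The congruences $a\equiv_{288}12$ and $b\equiv_{288}5$ force $a\neq 0$, $b\neq 0$ and $a+b\neq 0$. Unit multiples of rational integers have $(a,b)$ of the form $(0,n)$, $(n,0)$ or $(n,-n)$, so $\pi$ is not one. Also $N(\pi)\equiv_{288}229\neq 3$. Alternatively, $N(\pi)\equiv_8 5$ is not a square, which rules out inert primes, and it is not $3$. Either way $\pi$ is of the second type, so $N(\pi)$ is a prime $p\equiv_{12}1$.
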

\begin{proof}
    Consider $k=192\in \Eis\setminus \{0\}$ and $z=36\zeta_6+17\in \Eis$. Then $N(36\zeta_6+17)=2197$ is coprime with $192$, so in particular $k$ and $z$ are coprime. By Theorem~\ref{thm:Dirichlet} there exist infinitely many Eisenstein primes $\pi=a\zeta_6+b$ with $\pi\equiv_{192} 36 \zeta_6+17$. Let $\pi=a\zeta_6+b$ be such a prime. Now $a\equiv_{192} 36$ and $b\equiv_{192} 17$, implying $a\ne 0$, $b\ne 0 $, and $a+b\ne 0$, so that we may assume $\pi$ to be of the second type as in Proposition~\ref{prop:eis}; now $N(\pi)=a^2+ab+b^2$ is equal to a prime $p\in \Z$ with $p\equiv_3 1$. Let $x=a/2+b$ and $y=a/2$, then $p=x^2+3y^2$ and $x\equiv_{96} 35$ and $y\equiv_{96} 18$.

    In particular, $x$ is odd and $y$ is even so that $p=x^2+3y^2\equiv_4 1$ and hence $p\equiv_{12} 1$. Furthermore, $x\equiv_3 2$ and $y\equiv_3 0$, so that $36c=(x+1)^2+3y^2$ by the first part of Lemma~\ref{lema:formulas specific cyclotomic numbers}. Also, $x\equiv_{32} 3$ and $y\equiv_{32} 18$ so that $36c\equiv_{32} 4^2+3\cdot 18^2 \equiv_{32} 28$. Now $ 9c \equiv _8 7$, and so $c\equiv_8 7$, concluding the proof.
\end{proof}

\subsection{Quotient rings of the Eisenstein integers}
Here we present a construction of strictly Neumaier graphs using quotient rings of the Eisenstein integers. Afterwards, we show that this construction produces infinitely many strictly Neumaier graphs.

\begin{thm}\label{thm:zeta6}
    Let $q$ be a prime power such that $q\equiv_{12} 7$, and let $\a$ be a primitive element of $\Fq$. Let $r\ge 10$ be an integer with $r\equiv_3 1$ such that every prime $p\equiv_3 2$ divides $r$ an even number of times, and write $r=a^2 +ab+b^2$ with $0\le a \le b$ and $a\not \equiv_3 b$. Let $R = \Eis/(a\zeta_6+b)$, and define $\xi:\Fq^*\to R^*$ through $\xi(\a)=\zeta_6$. Write $c=c_q^6(\a;2,1)$. If $r$ divides $2c+2$, then $\overline \Gamma\big (\Eis/(a\zeta_6+b),q,\xi\big )$ is a strictly Neumaier graph with parameters $\big ( 2q(c+1),q+2c,2c;1,2c+2 \big )$.
\end{thm}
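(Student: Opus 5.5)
The proof follows the template of Theorem~\ref{thm:F4 eps^2}. We apply Theorem~\ref{thm:general} with $R=\Eis/(a\zeta_6+b)$ and $\rho=\zeta_6$, which requires four checks: that $\xi$ is well defined with $\xi(-1)=-1$, that $b=2c+2$, that $|R|=r$ divides $b$, and that strictness holds via part 4.

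First, Proposition~\ref{prop:eis}(2) gives $|R|=N(a\zeta_6+b)=r\ge 10$. Since $r\equiv_3 1$, we have $\Char(R)\ge 3$: a characteristic of $2$ or $3$ would force $r$ to be a power of $2$ or $3$. The power-of-$3$ case contradicts $r\equiv_3 1$ directly. For the power-of-$2$ case, $2$ is inert in $\Eis$, so $(a\zeta_6+b)$ would be $(2^j)$ up to a unit, giving $a\equiv_3 b$ after normalizing (with $a=0$ and $b=2^j$ this is excluded only if $2^j\not\equiv_3 0$; this case needs care). I would handle it by checking that $1\ne -1$ in $R$, i.e.\ $2\notin(a\zeta_6+b)$, which holds because $r\ge 10>4=N(2)$.

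Next, $\zeta_6^3=-1$ in $R$. We also need the smallest $\ell$ with $\zeta_6^\ell=-1$ to equal $3$, i.e.\ $\zeta_6,\zeta_6^2\ne -1$ in $R$. This holds because $\zeta_6+1$ and $\zeta_6^2+1=2\zeta_6$ have norms $3$ and $4$, both less than $r$, so they are not divisible by $a\zeta_6+b$. Lemma~\ref{lema:-1} with $\Char(R)\ge3$ and $\ell=3$ then requires $q\equiv_{12}7$, which is exactly our hypothesis. In the same way, the six elements $\zeta_6^j$ are distinct in $R$, since all differences $\zeta_6^i-\zeta_6^j$ have norm $1$, $3$ or $4$, which is less than $r$. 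So $G=\{1,\zeta_6,\dots,\zeta_6^5\}$ has order $m=6$.

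To compute $G\cap(G+1)$, we need the pairs $g,g'\in G$ with $g-g'=1$ in $R$. In $\Eis$ these are exactly $\zeta_6-\zeta_6^2=1$ and $-\zeta_6^2-(-\zeta_6)=\zeta_6^5+\zeta_6^4$, which equals $1$ after checking. Congruences could add more pairs, but $g-g'-1$ has norm at most $9<10\le r$ and is nonzero, so the Eisenstein identities are the only ones. This step, together with the earlier norm bounds, is the main use of $r\ge10$. The step I expect to need the most care is this enumeration: confirming that $G\cap(G+1)=\{\zeta_6,\zeta_6^5\}$. Here $\zeta_6-1=\zeta_6^2$ and $\zeta_6^5-1=-\zeta_6=\zeta_6^4$.

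Lemma~\ref{lema:b cyclotomy} then gives
\[
b=2+c_q^6(\a;2,1)+c_q^6(\a;4,5).
\]
By Proposition~\ref{prp:cyclo}(1), $c_q^6(\a;4,5)=c_q^6(\a;2,1)$, since $(-4,5-4)\equiv_6(2,1)$. Hence $b=2c+2$, and if $r\mid 2c+2$ then part 2 of Theorem~\ref{thm:general} yields a Neumaier graph with the stated parameters. For strictness, $|\xi(\Fq^*)|=6$ while $|R^*|\ge$ ... Rather than compare with $|R^*|$, I use part 4 directly: if $R$ were a field with $\xi$ surjective, then $r-1=6$, i.e.\ $r=7<10$, a contradiction. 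So the graph is strictly Neumaier.
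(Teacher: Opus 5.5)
Your proposal is correct and follows the paper's proof. You establish distinctness of the six powers of $\zeta_6$ and $G\cap(G+1)=\{\zeta_6,\zeta_6^5\}$ via the bound that nonzero elements of $(a\zeta_6+b)$ have norm at least $r\ge 10$, then use Lemma~\ref{lema:b cyclotomy} with Proposition~\ref{prp:cyclo}(1) to get $b=2c+2$, and part 4 of Theorem~\ref{thm:general} for strictness, where your counting argument ($r-1=6$ would force $r=7$) is a clean substitute for the paper's observation that $2\notin\im\xi$. There are two harmless slips: $\zeta_6^2+1=\zeta_6$ (a unit of norm $1$), not $2\zeta_6$, and the second identity should read $\zeta_6^5-\zeta_6^4=1$.
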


\begin{proof}
    Write $I\subseteq \Eis$ for the ideal generated by $a\zeta_6+b$. Then for every non-zero $x\in I$ we have $10\le r\le N(x)$. Furthermore,
    \begin{align*}
        \zeta_6^2&=\zeta_6-1,\\
        \zeta_6^3&=-1,\\
        \zeta_6^4&=-\zeta_6,\\
        \zeta_6^5&=-\zeta_6+1,\\
        \zeta_6^6&=1.
    \end{align*}
    We have $N(\zeta_6^i-\zeta_6^j)\in \{1,3,4\}$ for distinct $i,j\in \{0,\dots,5\}$, so the group $G=\langle \zeta_6\rangle _{R^*}$ consists of $6$ distinct elements of $R$. From  Lemma~\ref{lema:-1} and using $q\equiv_{12} 7$ we see that $\xi$ is well-defined and $\xi(-1)=-1$.

    Additionally, we have $N(\zeta_6^i+1-\zeta_6^j)\in \{1,3,4,7,9\}$ for every $i$ and $j$ except for $(i,j)=(2,1)$ and $(i,j)=(4,5)$ in which case the value is $0$, so that by the above $G\cap(G+1)=\{\zeta_6,-\zeta_6+1\}$. Write $b'=|B|$. By Lemma~\ref{lema:b cyclotomy} now $b'=2+c_q^6(\a;2,1)+c_q^6(\a;4,5)$. By the first or second part of Proposition~\ref{prp:cyclo}, these two cyclotomic numbers are equal to $c$. Now $b'$ is a multiple of $|R|$ (which is equal to $r$ by Proposition~\ref{prop:eis}) if and only if $r$ divides $2c+2$. Moreover, $\xi$ is not surjective since $2$ is not in the image of $ \xi$. The result now follows from Theorem~\ref{thm:general}.
\end{proof}

In Table~\ref{tab:zeta6} we show the smallest strictly Neumaier graphs coming from Theorem~\ref{thm:zeta6}. Note that for the cases where $r$ is a prime number, the quotient $\Eis/(a\zeta_6+b)$ is actually a finite field of prime order $r$. Furthermore, for $r=28$ we have $\Eis/(2\zeta_6+4)\cong \F_4 \times \F_7$ where $\zeta_6$ corresponds to $(\beta,5)$ (with $\beta\in \F_4$ such that $\beta^2=\beta+1$) and for $r=52$ we have $\Eis/(2\zeta_6+6)\cong\F_4\times \F_{13} $ where $\zeta_6$ corresponds to $(\beta,10)$. Also, note that even though Table~\ref{tab:zeta6} only shows prime numbers $q$, there are also proper prime powers to which Theorem~\ref{thm:zeta6} applies; for $q=19^3$, we have $c=189$, so that $2c+2$ is divisible by $r=19$.

\begin{table}[ht]
    \begin{center}
        \begin{tabular}{r|r|rrr||rrrrr}
        $q$ & $c_q^6(\a;2,1)$ & $r$ & $a$ & $b$ & $v$ & $k$ & $\lambda$ & $e$ & $s$\\
        \hline
        \hline
        151 & 7 & 16 & 0 & 4 & 2\,416 & 165 & 14 & 1 & 16\\
        \hline
        223 & 7 & 16 & 0 & 4 & 3\,568 & 237 & 14 & 1 & 16\\
        \hline
        283 & 7 & 16 & 0 & 4 & 4\,528 & 297 & 14 & 1 & 16\\
        \hline
        367 & 7 & 16 & 0 & 4 & 5\,872 & 381 & 14 & 1 & 16\\
        \hline
        439 & 13 & 28 & 2 & 4 & 12\,292 & 465 & 26 & 1 & 28\\
        \hline
        463 & 7 & 16 & 0 & 4 & 7\,408 & 477 & 14 & 1 & 16\\
        \hline
        499 & 13 & 28 & 2 & 4 & 13\,972 & 525 & 26 & 1 & 28\\
        \hline
        523 & 18 & 19 & 2 & 3 & 19\,874 & 559 & 36 & 1 & 38\\
        \hline
        631 & 15 & 16 & 0 & 4 & 20\,192 & 661 & 30 & 1 & 32\\
        \hline
        751 & 25 & 13 & 1 & 3 & 39\,052 & 801 & 50 &1 & 52\\
        && 52 & 2 & 6\\
        \hline
        823 & 31 & 16 & 0 & 4 & 52\,672 & 885 & 62 & 1 & 64\\
        &&64 & 0 & 8\\
        \hline
        883 & 18 & 19 & 2 & 3 & 33\,554 & 919 & 36 & 1 & 38\\
        \hline
        919 & 27 & 28 & 2 & 4 & 51\,464 & 973 & 54 & 1 & 56\\
        \hline
        991 & 27 & 28 & 2 & 4 & 55\,496 & 1\,045 & 54 & 1 & 56\\
        \end{tabular}
    \end{center}
    \caption{The tuples $(q,r,a,b)$ with $q\le 1000$ and $r=a^2+ab+b^2\ge 10$ for which Theorem~\ref{thm:zeta6} applies; those with $q\equiv_{12} 7$, $r\equiv_3 1$ such that every prime $p\equiv_3 2$ divides $r$ an even number of times, and $2c_q^6(\a;2,1)\equiv_{r} -2$.}
    \label{tab:zeta6}
\end{table}

We show that Theorem~\ref{thm:zeta6} produces infinitely many strictly Neumaier graphs for every suitable choice of $r$, similar to Theorem~\ref{thm:F4 eps^2 infinite}.

\begin{thm}\label{thm:zeta6 infinite}
    Let $r$ be a positive integer that is not divisible by 3. Then there are infinitely many primes $p\equiv_{12} 7$ for which $r$ divides $2c+2$, where $c=c_p^6(\a;2,1)$. In particular, for every integer $r$ to which Theorem~\ref{thm:zeta6} applies, it gives infinitely many strictly Neumaier graphs.
\end{thm}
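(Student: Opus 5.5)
The plan is to translate the divisibility $r \mid 2c+2$ into a congruence condition on the representation $p=x^2+3y^2$, and then to produce infinitely many such primes with Theorem~\ref{thm:Dirichlet}, as in the proof of Theorem~\ref{thm:F4 eps^2 infinite}.

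\textbf{Reduction to a congruence.} Restrict to primes $p=x^2+3y^2$ with $x\equiv_3 2$, $y\equiv_3 0$, $x$ even and $y$ odd. Then $p\equiv_4 3$, and together with $p\equiv_3 1$ this gives $p\equiv_{12}7$. The second part of Lemma~\ref{lema:formulas specific cyclotomic numbers} gives $36c=(x+1)^2+3y^2$, so
\[
36(2c+2)=2\big((x+1)^2+3y^2+36\big).
\]
Set $F(x,y)=(x+1)^2+3y^2+36$. Since $r$ is coprime to $3$, it suffices that $F(x,y)\equiv 0$ modulo $2^{e+1}\ell_1^{e_1}\cdots\ell_s^{e_s}$, where $r=2^e\ell_1^{e_1}\cdots\ell_s^{e_s}$ with odd primes $\ell_i\ne 3$. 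Multiplying by $2$ then gives $36\cdot 2^{e}\ell_1^{e_1}\cdots\ell_s^{e_s}\mid 36(2c+2)$, hence $r\mid 2c+2$. The plan is to find one pair $(x_0,y_0)$ satisfying all of these congruences, together with $x_0\equiv_3 2$, $y_0\equiv_3 0$, $x_0$ even and $y_0$ odd, such that $x_0^2+3y_0^2$ is coprime to $6r$. The conditions modulo the different primes are then combined by the Chinese remainder theorem.

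\textbf{Local solutions.} At $2$, the pair $(x,y)=(0,1)$ gives $F=40\equiv_8 0$. Also $\partial F/\partial x=2(x+1)$ has $2$-adic valuation $1$ at this point. So Theorem~\ref{thm:Hensel general} applies with $N=3$, $k=1$, and yields a solution modulo $2^{e+1}$ with $x\equiv_4 0$ and $y\equiv_4 1$. This preserves the parities, and $x^2+3y^2$ stays odd.

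At an odd prime $\ell\ne 3$, homogenize: $(X+Z)^2+3Y^2+36Z^2$ is a nondegenerate quadratic form over $\F_\ell$, since its determinant is a nonzero multiple of $3\cdot 36$. Proposition~\ref{prop:conic count} then gives $\ell+1$ points, of which at most $2$ lie at infinity, so there are at least $\ell-1$ affine solutions. I must choose one with $(x+1,y)\not\equiv(0,0)$, so that $\partial F/\partial x$ or $\partial F/\partial y$ is nonzero and Theorem~\ref{thm:Hensel} applies. I must also have $x^2+3y^2\not\equiv_\ell 0$, for coprimality.

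On the conic, $x^2+3y^2\equiv 0$ together with $F\equiv 0$ forces $2x+37\equiv 0$. This is a line, which meets the conic in at most $2$ points. The point $(x,y)\equiv(-1,0)$ is on the conic only if $\ell\mid 36$, which is excluded. So a good point exists once $\ell-1>2$, that is, for $\ell\ge 5$. This counting step is where I expect the main difficulty: small primes, and the degenerate behaviour that could make the conic reducible, need a careful check. For $\ell=5,7$ I would exhibit explicit points if the count turns out to be too tight.

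The congruences modulo $3$ are handled directly. With $x\equiv_3 2$ and $y\equiv_3 0$ we get $x^2+3y^2\equiv_3 1$. Moreover $F$ is automatically divisible by $9$, but this is not even needed.

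\textbf{Conclusion via Dirichlet.} Let $M=12r\cdot 9$ and let $(x_0,y_0)$ be the combined solution modulo $M$. Put $z=2y_0\zeta_6+(x_0-y_0)\in\Eis$. Its norm is $x_0^2+3y_0^2$, which is coprime to $M$, so $z$ is coprime to $k=2M$. Theorem~\ref{thm:Dirichlet} gives infinitely many Eisenstein primes $\pi=a\zeta_6+b\equiv_k z$. With $x=a/2+b$ and $y=a/2$ we get $x\equiv_M x_0$ and $y\equiv_M y_0$. As in the proof of Theorem~\ref{thm:F4 eps^2 infinite}, $a$ and $b$ are nonzero and the norm $p=x^2+3y^2$ is a rational prime. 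The reduction step then gives $p\equiv_{12}7$ and $r\mid 2c+2$.

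For the last claim, take $r$ as in Theorem~\ref{thm:zeta6}. Then $r\equiv_3 1$, so $3\nmid r$, and each of these primes $p$ yields a strictly Neumaier graph. These graphs have distinct vertex counts $2p(c+1)$, so there are infinitely many of them.
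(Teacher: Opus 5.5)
Your proposal is correct and follows essentially the same route as the paper's proof via Lemma~\ref{lema:zeta6 infinite}: local solutions of $(x+1)^2+3y^2+36\equiv 0$ avoiding $x^2+3y^2\equiv 0$ from the conic count plus Hensel at primes $\ell\ge 5$, general Hensel from $(0,1)$ at $2$, CRT, then Theorem~\ref{thm:Dirichlet} applied to $z=2y_0\zeta_6+x_0-y_0$ and the second part of Lemma~\ref{lema:formulas specific cyclotomic numbers}. Your worry about $\ell=5,7$ is unnecessary: the conic is nondegenerate for every prime $\ell\ge 5$ (its Gram determinant is exactly $108$), and $\ell-1>2$ already holds for all such $\ell$.
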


To prove Theorem~\ref{thm:zeta6 infinite} we use the following lemma.

\begin{lema}\label{lema:zeta6 infinite}
    Let $r$ be a positive integer that is not divisible by 3. Then there exist integers $x$ and $y$ satisfying
    \begin{enumerate}[label=(\roman*)]
        \item $x\equiv_6 2$ and $y\equiv_6 3$,
        \item $2r \mid (x+1)^2 + 3y^2+36$,
        \item $\gcd(r,x^2+3y^2)=1$.
    \end{enumerate}
\end{lema}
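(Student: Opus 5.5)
The plan is to construct $x$ and $y$ one prime at a time and glue the pieces together with the Chinese Remainder Theorem. Put $f(x,y)=(x+1)^2+3y^2+36\in\Z[x,y]$ and write $r=2^{e}\prod_{i} p_i^{k_i}$, where the $p_i$ are distinct primes $\ge 5$; this is possible because $3\nmid r$. Since $\gcd(6,2r)$ divides $2$ and $3\nmid r$, the three conditions split into independent local conditions:
\begin{itemize}
\item modulo $3$: $x\equiv_3 2$ and $y\equiv_3 0$, with no further constraint;
\item modulo $2^{e+1}$: $x$ even, $y$ odd, and $2^{e+1}\mid f(x,y)$;
\item modulo $p_i^{k_i}$: $p_i^{k_i}\mid f(x,y)$ and $p_i\nmid x^2+3y^2$.
\end{itemize}
Conditions (i) and (ii) are exactly the combination of these congruences. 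For (iii), note that $\gcd(r,x^2+3y^2)=1$ holds at the prime $2$ automatically, because $x$ even and $y$ odd make $x^2+3y^2$ odd. Once each local system has a solution, the Chinese Remainder Theorem gives global integers $x,y$.

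\textbf{The prime $2$.} Take $(x,y)=(0,1)$. Then $f(0,1)=40\equiv_8 0$ and $\partial f/\partial y=6y$ has $2$-adic valuation exactly $1$. This fits Theorem~\ref{thm:Hensel general} with $N=3$ and $k=1$, since $2k<N$. It yields $(x,y)$ congruent to $(0,1)$ modulo $4$ with $f(x,y)\equiv_{2^{e+1}}0$. The congruence modulo $4$ preserves the parities $x$ even and $y$ odd.

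\textbf{The primes $p\ge5$.} This is the main step, and I expect it to be the main obstacle. Fix a prime $p\ge5$ dividing $r$. First find one solution modulo $p$ using the homogenization $F(X,Y,Z)=(X+Z)^2+3Y^2+36Z^2$.
\begin{itemize}
\item Its Gram matrix is $\begin{pmatrix}1&0&1\\0&3&0\\1&0&37\end{pmatrix}$, with determinant $108=2^2\cdot 3^3$. This is nonzero modulo $p$, so the conic $F=0$ has no singular points. By Proposition~\ref{prop:conic count} it then has exactly $p+1$ points.
\item At most $2$ of these points lie at infinity, since there $Z=0$ and $X^2+3Y^2=0$.
\item Among the affine solutions, those with $x^2+3y^2\equiv_p 0$ satisfy $f-(x^2+3y^2)=2x+37\equiv_p0$. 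So $x$ is determined, and then $y$ takes at most $2$ values.
\end{itemize}
Hence at least $p+1-2-2=p-3\ge 2$ affine solutions $(x_0,y_0)$ have $p\nmid x_0^2+3y_0^2$. Second, check that each affine solution is nonsingular. The gradient $(2(x+1),6y)$ vanishes modulo $p$ only at $(-1,0)$, and there $f=36\not\equiv_p0$. Theorem~\ref{thm:Hensel} therefore lifts $(x_0,y_0)$ to a solution modulo $p^{k}$ that is still congruent to $(x_0,y_0)$ modulo $p$. In particular $p\nmid x^2+3y^2$ is preserved.

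Combining the solutions for $3$, $2^{e+1}$ and each $p_i^{k_i}$ by the Chinese Remainder Theorem gives the required $x$ and $y$. The only delicate point is the count for the primes $p\ge5$: the bound $p-3\ge 2>0$ is exactly why the case $p=3$ must be excluded (it is excluded, since $3\nmid r$). The prime $2$ is the one place where Theorem~\ref{thm:Hensel general} is needed in place of Theorem~\ref{thm:Hensel}.
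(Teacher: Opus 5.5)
Your proof is correct and follows the paper's argument essentially step by step. Both use the same CRT splitting into conditions at $3$, at $2^{e+1}$ and at each $p_i^{k_i}$; the same conic count (at most two points at infinity, at most two affine points on $x^2+3y^2=0$) followed by Hensel's lemma for $p\ge5$; and the same application of Theorem~\ref{thm:Hensel general} at $(0,1)$ with $N=3$, $k=1$ at the prime $2$. Your Gram-determinant computation ($\det=108$) usefully makes explicit the nonsingularity hypothesis of Proposition~\ref{prop:conic count}, which the paper only covers by ``using that $\ell_i\ge5$''.
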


\begin{proof}
Write $f(x,y):=(x+1)^{2}+3y^{2}+36$ and $g(x,y):=x^{2}+3y^{2}$, so that (ii) and
(iii) read $2r\mid f(x,y)$ and $\gcd(r,g(x,y))=1$. As $3\nmid r$ we may write
$r=2^{m}\ell_{1}^{e_{1}}\cdots\ell_{s}^{e_{s}}$ with $m\geq0$, $s\ge 0$, $e_{i}\geq1$ and
$\ell_{1}<\dots<\ell_{s}$ primes satisfying $\ell_{i}\geq5$, so that
$2r=2^{m+1}\ell_{1}^{e_{1}}\cdots\ell_{s}^{e_{s}}$. By the Chinese Remainder Theorem it suffices to produce
integers $u_{0},v_{0}$ with $u_{0}$ even, $v_{0}$ odd and
$2^{m+1} \mid f(u_{0},v_{0})$, together with integers $u_{i},v_{i}$
satisfying
\[
  \ell_i ^{e_i } \mid f(u_{i},v_{i})
  \qquad\text{and}\qquad
  \ell_i \nmid g(u_{i},v_{i})
\]
for $1\leq i\leq s$. Indeed, any $x,y\in\mathbb Z$ with $x\equiv_{2^{m+1}}u_{0}$
and $y\equiv_{2^{m+1}}v_{0}$, with $x\equiv_{3}2$ and $y\equiv_{3}0$ and with
$x\equiv_{\ell_{i}^{e_{i}}}u_{i}$ and $y\equiv_{\ell_{i}^{e_{i}}}v_{i}$ for all
$i$ then satisfy (i) and (ii), and they satisfy (iii) as well because $g(x,y)$
is odd and coprime to every $\ell_{i}$.

Fix $i$ with $1\leq i\leq s$. The homogenization of $f$ defines the conic
\[
  \mathcal C:\ (X+Z)^{2}+3Y^{2}+36Z^{2}=0
\]
of $\PG(2,\ell_{i})$, for which
$|\mathcal C|=\ell_{i}+1$ by Proposition~\ref{prop:conic count} (using that $\ell_i\ge 5$). At infinity (setting $Z=0$)
we have the equation $X^{2}+3Y^{2}=0$, which has either zero or two solutions. Consequently at least $\ell_{i}-1$ of the $\ell_{i}+1$ points of
$\mathcal C$ are affine, so $f$ has at least $\ell_{i}-1$ zeros in
$\mathbb F_{\ell_{i}}^{2}$. At most two of these zeros lie on $g=0$: if
$f(u,v)=g(u,v)=0$, then subtracting the two equations gives
$0=f(u,v)-g(u,v)=2u+37$, which determines $u$, and
then $g(u,v)=0$ determines $v$ up to sign. As $\ell_{i}-1>2$, there exists
$(u,v)\in\mathbb F_{\ell_{i}}^{2}$ with $f(u,v)=0$ and $g(u,v)\neq0$. Fix
integers $u,v$ representing such a pair. The partial derivatives
$\partial f/\partial x=2(x+1)$ and $\partial f/\partial y=6y$ do not both vanish
at $(u,v)$ modulo $\ell_{i}$, since otherwise $u\equiv_{\ell_{i}}-1$ and
$v\equiv_{\ell_{i}}0$ and hence
$f(u,v)\equiv_{\ell_{i}}36\not\equiv_{\ell_{i}}0$, contrary to the choice of
$(u,v)$. Applying Hensel's lemma (Theorem~\ref{thm:Hensel}) we obtain
$u_{i},v_{i}\in\mathbb Z$ with $u_{i}\equiv_{\ell_{i}}u$,
$v_{i}\equiv_{\ell_{i}}v$ and $f(u_{i},v_{i})\equiv_{\ell_{i}^{e_{i}}}0$, and
then also $g(u_{i},v_{i})\equiv_{\ell_{i}}g(u,v)\not\equiv_{\ell_{i}}0$, as
required.

At the prime $2$ we invoke Hensel's lemma in its general form (Theorem~\ref{thm:Hensel general}). Let $p=2$, $a=(0,1)$, $N=3$ and $k=1$; note that $f(a)=f(0,1)=40\equiv_8 0$, and $\frac{\partial f}{\partial y} (a)=6$, which has 2-adic valuation equal to $k=1$. We obtain $u_0,v_0 \in \Z$ with $u_0 \equiv _{4} 0$ and $v_0 \equiv_4 1$ (so in particular $u_0$ is even and $v_0$ is odd) and $2^{m+1} \mid f(u_0,v_0)$. This concludes the proof.
\end{proof}

We are now ready to prove Theorem~\ref{thm:zeta6 infinite}. To do so we follow a similar argument as it was done in Theorem~\ref{thm:F4 eps^2 infinite} and \cite[Theorem 5.20]{ACDKZ2023}.

\begin{proof}[Proof of Theorem~\ref{thm:zeta6 infinite}]
Let $x$ and $y$ be as in Lemma~\ref{lema:zeta6 infinite} and consider  $z=2y\zeta_6+x-y$. Note that $N(z)=(2y)^2+2y(x-y)+(x-y)^2=x^2+3y^2$ is coprime to $36r$ by the first and third parts of Lemma~\ref{lema:zeta6 infinite}. We apply Theorem~\ref{thm:Dirichlet} to obtain infinitely many Eisenstein primes $\pi=a\zeta_6+b$ with $\pi \equiv_{36r} 2y\zeta_6 + x-y$. Now $a\equiv_{36r} 2y$ and $b\equiv_{36r} x-y$. By the first part of Lemma~\ref{lema:zeta6 infinite}, we know that $a\ne 0$, $b\ne 0$, and $a+b\ne 0$, so that we may assume $\pi$ to be of the second type as in Proposition~\ref{prop:eis}; now $N(\pi)=a^2+ab+b^2$ is equal to a prime $p\in \Z$ with $p\equiv_3 1$. Note that $a$ is even, and write $\widetilde x = a/2+ b$ and $\widetilde y =a/2$. Then $p=\widetilde x^2+ 3 \widetilde y^2$ and $\widetilde x \equiv_{18 r} x$ and $\widetilde y \equiv_{18 r} y$.

In particular, by the first part of Lemma~\ref{lema:zeta6 infinite} we have that $\widetilde x$ is even, and $\widetilde y$ is odd, so that $p=\widetilde x^2+ 3 \widetilde y^2\equiv_4 3$, from which it follows that $p\equiv_{12} 7$. Furthermore $\widetilde x\equiv_3 2$ and $\widetilde y \equiv_3 0$ so that $36c=(\widetilde x+1)^2+3 \widetilde y^2$ by the first part of Lemma~\ref{lema:formulas specific cyclotomic numbers}. Since $\widetilde x \equiv_{18r} x$ and $\widetilde y \equiv_{18 r} y$ we have $36c+36 \equiv_{18 r} (x+1)^2+3y^2+36$. It follows from the first and second parts of Lemma~\ref{lema:zeta6 infinite} that the number $(x+1)^2+3y^2+ 36$ is divisible by $9$ and $2r$, and hence by $18r$. We conclude that $36c+36$ is divisible by $18r$, and hence that $2c+2$ is divisible by $r$, completing the proof.
\end{proof}
\subsection{Proof of Theorem~\ref{thm:choices for R}}\label{sec:choices for R}

To prove Theorem~\ref{thm:choices for R}, we need the following three lemmas.

\begin{lema}\label{lema:ring symmetries}
    Let $R$, $q$, $\xi$, and $b$ be as in Theorem~\ref{thm:general}. Let $\varphi$ be a field automorphism of $\Fq$, $S$ a finite ring, and $\iota:R\hookrightarrow S$ an injective ring homomorphism (so that $|R|$ is a divisor of $|S|$). Then we have the following two isomorphisms of graphs:
    \begin{align*}
    \Gamma(S,q,\iota \circ \xi \circ \varphi) &\cong \frac{|S|}{|R|} \cdot \Gamma(R,q,\xi).
    \intertext{Furthermore, if $|S|$ is a divisor of $b$, then}
    \overline \Gamma(S,q,\iota \circ\xi\circ\varphi) &\cong \overline \Gamma(R,q,\xi).
    \end{align*}
\end{lema}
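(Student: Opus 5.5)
The plan is to reduce the first isomorphism to two elementary observations, and then deduce the second from the first together with the structure of the construction $F$.

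\textbf{Step 1: precomposing with $\varphi$.} First I show that $\Gamma(R,q,\xi\circ\varphi)\cong\Gamma(R,q,\xi)$. Since $\varphi$ is a field automorphism of $\Fq$, the map $\xi\circ\varphi$ is again a homomorphism $\Fq^*\to R^*$ with $\xi(\varphi(-1))=\xi(-1)=-1_R$, so it is admissible in Theorem~\ref{thm:general}. The additive group automorphism $(r,x)\mapsto(r,\varphi^{-1}(x))$ of $R\times\Fq$ maps the connecting set $\{(\xi(x),x)\}$ onto $\{(\xi(\varphi(y)),y)\}$ via $y=\varphi^{-1}(x)$. Hence it is a Cayley graph isomorphism between the two graphs.

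\textbf{Step 2: extending the ring along $\iota$.} Next I compare $\Gamma(S,q,\iota\circ\xi)$ with $\Gamma(R,q,\xi)$. Since $\iota$ is a unital ring homomorphism, $\iota\circ\xi$ is an admissible homomorphism $\Fq^*\to S^*$. Its connecting set $\{(\iota\xi(x),x)\}$ lies in the subgroup $H=\iota(R)\times\Fq$ of $(S\times\Fq,+)$. A Cayley graph whose connecting set lies in a subgroup $H$ is the disjoint union of $[G:H]$ copies of the Cayley graph on $H$, one copy for each coset. Here $[G:H]=|S|/|R|$, and $\iota\times\mathrm{id}$ identifies the Cayley graph on $H$ with $\Gamma(R,q,\xi)$. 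Combining this with Step 1 gives the first isomorphism.

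\textbf{Step 3: the Neumaier graphs.} First I check that $b$ is unchanged by the replacement. Because $\iota$ is injective and $\iota(1_R)=1_S$, we have $\psi_S=\iota\circ\psi_R\circ\varphi$, so $\psi_S^{-1}(1_S)=\varphi^{-1}(\psi_R^{-1}(1_R))$ has size $b$. If $|S|\mid b$, then also $|R|\mid b$, and the numbers of copies are $t_S=b/|S|$ and $t_R=b/|R|=t_S\cdot|S|/|R|$.

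The coclique partition needs the most care. In $\Gamma(S,q,\iota\circ\xi)$ the $1$-regular cocliques are $C_x=S\times\{x\}$ for $x\in\Fq$. Each $C_x$ meets each of the $|S|/|R|$ components in exactly one coclique of that copy of $\Gamma(R,q,\xi)$, namely the one indexed by the same $x$ (after applying $\varphi$). Therefore $t_S$ copies of the $S$-graph, with their $q$ cocliques, are the same as $t_R$ copies of the $R$-graph, grouped into blocks of $|S|/|R|$ copies that carry identical permutations. The construction $F$ only joins vertices lying in cocliques with matched labels, so the two resulting graphs coincide.

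It remains to handle the freedom in the permutations. Either $\overline\Gamma$ is understood for an arbitrary choice of permutations, or I argue that different choices give isomorphic graphs. I would argue the latter: each component $\Gamma(R,q,\xi)$ has automorphisms that permute the coclique labels. These come from translations $(r,x)\mapsto(r,x+y)$ combined with a suitable relabelling, but they do not realise every permutation. So I would instead follow the paper's convention that the isomorphism holds for a corresponding choice of permutations, taking the $S$-permutations repeated blockwise as the $R$-permutations. I expect this bookkeeping of the permutations to be the main obstacle.
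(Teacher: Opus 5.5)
Your proposal is correct and follows essentially the same route as the paper. Both arguments twist by $\varphi$ via $(r,x)\mapsto(r,\varphi^{\pm1}(x))$ and decompose $\Gamma(S,q,\iota\circ\xi)$ along the cosets of $\iota(R)\times\Fq$, with each coclique $S\times\{x\}$ splitting into the matching cocliques of the copies. Both then conclude that the $S$-construction coincides with the $R$-construction for a particular (blockwise repeated) choice of permutations, which is exactly the paper's formulation. Your explicit check that $\psi_S=\iota\circ\psi_R\circ\varphi$, so that $b$ is unchanged and $\overline\Gamma(S,q,\iota\circ\xi\circ\varphi)$ is actually defined when $|S|\mid b$, is a detail the paper leaves implicit.
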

\begin{proof}
    The mapping $(r,x) \mapsto \big (r,\varphi(x)\big )$ defines a graph isomorphism from $\Gamma(R,q,\xi\circ \varphi)$ to $\Gamma(R,q,\xi)$, which also maps 1-regular cocliques to 1-regular cocliques. Hence, also $\overline \Gamma(R,q,\xi\circ \varphi)$ is isomorphic to $\overline \Gamma(R,q,\xi)$ if $|R|$ divides $b$.
    
    Next, note that for every $s\in S$, the induced subgraph of $\Gamma(S,q,\iota \circ \xi)$ on the vertex set $\{(s+r,x): r\in R, x\in \Fq\} $ is isomorphic to $\Gamma(R,q,\xi)$. So, partitioning the group $(S\times \Fq,+)$ into $|S|/|R|$ cosets of the subgroup $\im(\iota) \times \F_q$, we find an isomorphism from $\Gamma(S,q,\iota\circ \xi)$ to the disjoint union of $|S|/|R|$ copies of $\Gamma(R,q,\xi)$, such that each of the 1-regular cocliques from the first part of Theorem~\ref{thm:general} is the disjoint union of the corresponding 1-regular coclique in each of the $|S|/|R|$ copies of $\Gamma(R,q,\xi)$. It is now also clear from Theorem~\ref{thm:EvansMachine} that $\overline \Gamma(S,q,\iota\circ\xi) \cong \overline \Gamma(R,q,\xi)$ for a particular choice of the permutations $\pi_2,\dots,\pi_{b/|R|}$ as in the second part of Theorem~\ref{thm:general}.
\end{proof}

\begin{lema}\label{lema:ideal}
    Let $R$, $q$, $\xi$ and $b$ be as in Theorem~\ref{thm:general}, write $m=|\xi(\Fq^*)|$, and suppose that $|R|$ is a divisor of $b$. Let $\ell=m$ if $\Char(R)=2$ and $\ell=m/2$ otherwise. Suppose that the Neumaier graph $\overline \Gamma(R,q,\xi)$ is not a complete bipartite graph. Then it 
    is isomorphic to $\overline\Gamma(\Z[x]/I,q,\eta)$, where $I$ is a non-trivial ideal of $\Z[x]$ such that the following hold:
    \begin{itemize}
        \item $x^\ell+1\in I$,
        \item $x^i+x^j+1\in I$ for some $0\le i\le j<m$,
        \item $\Char(R)=\Char(\Z[x]/I)$ (so in particular $\Char(R)\in I$).
    \end{itemize}
\end{lema}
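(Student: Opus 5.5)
The idea is to replace $R$ by the subring it actually uses, and then present that subring as a quotient of $\Z[x]$. Let $\rho=\xi(\a)$ for a primitive element $\a$ of $\Fq$, and let $R'$ be the subring of $R$ generated by $\rho$. This is the image of the evaluation map $\ev_\rho:\Z[x]\to R$ with $x\mapsto\rho$. It is a commutative subring, since $\Z[\rho]$ is commutative even if $R$ is not. Let $I=\ker(\ev_\rho)$, so that $R'\cong \Z[x]/I$.

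Since $\xi(\Fq^*)=\langle\rho\rangle\subseteq R'^*$ (the inverse of $\rho$ is a power of $\rho$), we can write $\xi=\iota\circ\eta$. Here $\eta:\Fq^*\to(\Z[x]/I)^*$ sends $\a\mapsto x+I$, and $\iota$ is the inclusion $R'\hookrightarrow R$ composed with the isomorphism $\Z[x]/I\cong R'$. We also have $\eta(-1)=-1$, because $\iota$ is injective and $\xi(-1)=-1$.

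The parameter $b$ depends only on the equation $\xi(x)-\xi(x-1)=1$, and this equation holds in $R'$ if and only if it holds in $R$. So $b$ is the same for $\eta$ and for $\xi$. Since $|R'|$ divides $|R|$ and $|R|$ divides $b$, the divisibility hypothesis is preserved. Lemma~\ref{lema:ring symmetries}, with $\varphi$ the identity, then gives $\overline\Gamma(R,q,\xi)\cong\overline\Gamma(\Z[x]/I,q,\eta)$. The ideal $I$ is proper because $R'\neq 0$.

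Next come the three membership conditions.
\begin{itemize}
\item \emph{Characteristic.} The map $\Z\to R$ factors through $R'$, so $\Char(R')=\Char(R)$ and therefore $\Char(R)\in I$.
\item \emph{$x^\ell+1\in I$.} By the proof of Lemma~\ref{lema:-1}, $\rho^\ell=-1_R$, with $\ell=m$ in characteristic $2$ (where $-1=1$ and $\rho$ has order $m$) and $\ell=m/2$ otherwise. Hence $x^\ell+1\in I$.
\item \emph{The relation $x^i+x^j+1\in I$.} This is the step that uses the hypothesis that the graph is not complete bipartite.
\end{itemize}

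For the third condition, if $b>2$ there is an $x_0\in B\setminus\{0,1\}$. Then $\xi(x_0)=\rho^{j'}$ and $\xi(x_0-1)=\rho^{i'}$ satisfy $\rho^{j'}-\rho^{i'}=1$. Multiplying by $-1=\rho^\ell$ turns the minus sign into a plus: we get $\rho^{i'+\ell}+1=\rho^{j'}$ in odd characteristic, and $\rho^{i'}+\rho^{j'}+1=0$ directly in characteristic $2$.

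I expect the main obstacle to be getting an exact relation of the form $x^i+x^j+1$, with a plus sign throughout. For that we also want $\rho^{j'}$ to carry a minus sign, i.e.\ $\rho^{i''}+\rho^{j''}+1=0$. The route I would try is the symmetry $x\mapsto 1-x$ from the proof of Theorem~\ref{thm:norm infinite}. It maps $B$ to $B$, since $\xi(-y)=-\xi(y)$, and sends the pair $(\rho^{j'},\rho^{i'})$ to $(-\rho^{i'},-\rho^{j'})$. Combined with inversion, $x\mapsto 1/x$, and the resulting sign bookkeeping, this should rewrite $\rho^{j'}-\rho^{i'}=1$ as $\rho^i+\rho^j+1=0$ with $0\le i\le j<m$, reducing exponents mod $m$ and reordering $i,j$.

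It remains to treat $b=2$. Then $|R|$ divides $2$, so $R=\F_2$, and Theorem~\ref{thm:general} gives $b+q-2=q$ and the parameters $(2q,q,0;1,2)$. These are the parameters of the complete bipartite graph $K_{q,q}$, which is excluded by hypothesis.
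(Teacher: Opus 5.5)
Your reduction to $\Z[x]/I$ via $\ev_\rho$, the use of Lemma~\ref{lema:ring symmetries}, the characteristic and $x^\ell+1$ conditions, and the exclusion of $b=2$ via $K_{q,q}$ all match the paper's proof. The gap is in the third condition, which you leave unproved.

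First, your intermediate identity is wrong. From $\rho^{j'}-\rho^{i'}=1$ you get $\rho^{j'}=\rho^{i'}+1$, or equivalently $\rho^{j'}+\rho^{i'+\ell}=1$. You do not get $\rho^{i'+\ell}+1=\rho^{j'}$, which would say $\rho^{j'}=1-\rho^{i'}$.

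Second, the route you propose for getting all plus signs does not address the problem. The involution $x\mapsto 1-x$ sends the relation $\rho^{j'}-\rho^{i'}=1$ to $(-\rho^{i'})-(-\rho^{j'})=1$, which is the same relation. Composing with inversion only produces further relations of the same shape $g-g'=1$ with $g,g'\in G$. Symmetries of $B$ never change the sign pattern, so the ``sign bookkeeping'' you defer is the whole content of this step.

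The missing step is the observation you already used once. Since $-1_R=\rho^\ell$, we have $-\rho^k=\rho^{k+\ell}$ for every $k$, i.e.\ $G=-G$. Apply this to the other term: $\rho^{j'}=\rho^{i'}+1$ gives $\rho^{i'}+\rho^{j'+\ell}+1=0$. This holds in every characteristic; in characteristic $2$ we have $\ell=m$, and it is just your relation. Reducing exponents modulo $m$ and ordering them gives $x^i+x^j+1\in I$ with $0\le i\le j<m$.

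This is exactly the paper's argument. It picks $r\in G\cap(G+1)=(-G)\cap(G+1)$, writes $r=-\rho^i=\rho^j+1$, and reads off $\rho^i+\rho^j+1=0$.
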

\begin{proof}
    Let $\alpha$ be a primitive element of $\Fq$, and write $\rho=\xi(\alpha)$. Consider the evaluation morphism $\ev_\rho :\Z[x] \to R$ defined by
    $$\ev_{\rho}\left (\sum_{i=0}^d a_i x^i \right)=\sum_{i=0}^d a_i \rho^i.$$
    Write $I=\ker(\ev_\rho)$, then we get an injective morphism of rings $\iota:\Z[x]/I \hookrightarrow R$. Define a group homomorphism $\eta:\Fq^*\to (\Z[x]/I)^*$ by $\eta(\alpha)=x+I$, so that $\xi=\iota \circ \eta$ and $\eta$ satisfies the requirements of Theorem~\ref{thm:general}. By Lemma~\ref{lema:ring symmetries} we have $\overline \Gamma(R,q,\xi)\cong \overline \Gamma(\Z[x]/I,q,\eta)$.

    It is clear that if $z\in \Z$, then $\ev_\rho(z)=0$ if and only if $\Char(R)$ divides $z$, so that $\Char(R)=\Char(\Z[x]/I)$. Furthermore, since $\rho^\ell=-1$, we have that $\ev_\rho(x^\ell+1)=0$ so that $x^\ell+1\in I$.

    Let $G$ be as in Lemma~\ref{lema:b cyclotomy}. If $G\cap(G+1)=\emptyset$, then $b=2$ so $|R|=2$ and $\overline \Gamma(R,q,\xi)\cong K_{q,q}$, which is excluded in the statement. Hence we can take $r\in G\cap(G+1)=(-G) \cap (G+1)$. Write $r=-\rho^i$ and $r=\rho^j+1$ for $i,j\in \{0,1,\dots,m-1\}$, then $\rho^i+\rho^j+1=0$. Now $I$ contains the polynomial $x^i+x^j+1$, where we can assume without loss of generality that $i\le j$.
\end{proof}

\begin{lema}\label{lema:3}
    Let $R$, $q$, $\xi$ and $b$ be as in Theorem~\ref{thm:general}, and write $m=|\xi(\Fq^*)|$. If $b$ is divisible by $3$, then the following hold:
    \begin{itemize}
        \item $\Char(R)=3$,
        \item $q$ is a power of $3$,
        \item $m$ is coprime to $3$ and $m+1\in \langle 3 \rangle_{\Z_{2m}^*}$ (in particular, $m \neq 4$).
    \end{itemize}
    In particular, if $|R|$ is divisible by $3$, then in order to obtain Neumaier graphs from Theorem~\ref{thm:general} the above are necessary conditions.
\end{lema}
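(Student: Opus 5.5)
The plan is to exploit an order-$3$ symmetry of $B$, in the same way the proof of Theorem~\ref{thm:norm infinite} exploits the involution $x\mapsto 1-x$ to control the parity of $b_q^d$. Concretely, I would use the Möbius map $\sigma(x)=1-1/x$ on $\Fq\setminus\{0_{\Fq},1_{\Fq}\}$, which satisfies $\sigma^3=\mathrm{id}$.

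First I show that $\sigma$ preserves $B\setminus\{0,1\}$. Let $x\in B$ with $x\neq 0,1$, so $\xi(x-1)=\xi(x)-1$ and $\xi(x)$ is a unit. Multiplicativity of $\xi$ and $\xi(-1)=-1$ give
\[
\xi(\sigma(x))=\frac{\xi(x-1)}{\xi(x)}=1-\xi(x)^{-1},\qquad \xi(\sigma(x)-1)=\xi(-1/x)=-\xi(x)^{-1}.
\]
These differ by $1_R$, so $\sigma(x)\in B$.

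Since $\sigma$ has order $3$, every orbit has size $1$ or $3$. Hence $b-2$ is congruent modulo $3$ to the number of fixed points of $\sigma$ lying in $B$. A fixed point satisfies $x^2-x+1=0$, so there are at most two. The hypothesis $3\mid b$ forces this number to be $\equiv_3 1$, hence exactly one fixed point lies in $B$.

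Next I rule out the case of two distinct roots of $x^2-x+1$ with exactly one in $B$. If $x$ is a root, then $\sigma(x)=x$ means $1/x=1-x$, and the other root is $1-x=1/x$. I would check that $B$ is closed under $x\mapsto 1-x$, using the computation in the proof of Theorem~\ref{thm:norm infinite}, which only uses multiplicativity and $\xi(-1)=-1$. So either both roots lie in $B$ or neither does, and exactly one fixed point in $B$ forces a double root. Then the discriminant $-3$ vanishes in $\Fq$, so $q$ is a power of $3$, and the double root is $x=2=-1$.

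Now $-1\in B$ gives $\xi(-1)-\xi(-2)=1$. Since $-2=1$ in $\Fq$, this reads $-1_R-1_R=1_R$, i.e.\ $3\cdot 1_R=0$, so $\Char(R)=3$.

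For the last item, $m$ divides $q-1=3^e-1$, so $\gcd(m,3)=1$. Because $\Char(R)\ge 3$, Lemma~\ref{lema:-1} gives $m=2\ell$ and $\rho^{m/2}=-1_R$. On the other hand, $-1_{\Fq}=\a^{(q-1)/2}$, so
\[
-1_R=\xi(-1_{\Fq})=\rho^{(q-1)/2}.
\]
Comparing exponents modulo $m$ gives $(q-1)/2\equiv_m m/2$, i.e.\ $3^e=q\equiv_{2m} m+1$. Since $m+1$ is odd and coprime to $m$, it is a unit in $\Z_{2m}$, so $m+1\in\langle 3\rangle_{\Z_{2m}^*}$. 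For $m=4$ this would require $5\in\{1,3\}=\langle 3\rangle_{\Z_8^*}$, which is false.

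The final sentence of the statement follows because Neumaier graphs from Theorem~\ref{thm:general} require $|R|\mid b$, so $3\mid |R|$ implies $3\mid b$. I expect the main obstacle to be the fixed-point bookkeeping, namely excluding the situation where exactly one of two distinct roots lies in $B$. The closure of $B$ under $x\mapsto 1-x$, which swaps the two roots, handles this.
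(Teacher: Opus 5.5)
Your proposal is correct and follows the paper's proof essentially step for step. Both use the order-$3$ map $x\mapsto 1-1/x$ on $B\setminus\{0,1\}$ and count its fixed points, the roots of $x^2-x+1$, in $B$. Both then use closure of $B$ under $x\mapsto 1-x$ to force a single double root $2=-1$, which yields characteristic $3$ for $\Fq$ and for $R$, and finish with the congruence from Lemma~\ref{lema:-1}. Your discriminant argument and your explicit verification that $m\neq 4$ are minor refinements; the paper leaves the latter implicit.
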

\begin{proof}
    Let $\xi$ and $B$ be as in Theorem~\ref{thm:general}. Consider the map $\varphi:x\mapsto 1-\frac 1x$, which defines a bijection on $\F_q^*\setminus\{0,1\}$. If $x\in B\setminus \{0,1\}$, then $\xi(x-1)=\xi(x)-1$, and after dividing by $\xi(x)$, we have $\xi\left(1-\frac1x\right)=1-\xi\left(\frac1x\right)$, which, using $\xi(-1)=-1$, is equivalent to $\xi(\varphi(x)-1)=\xi(\varphi(x))-1$, so that $\varphi(x)\in B$. Now $\varphi$ is a bijection on $B\setminus\{0,1\}$. Next, note that $\varphi(\varphi(\varphi(x)))=x$ for every $x$, so that the action of the cyclic group of order 3 through $\varphi$ partitions $B\setminus \{0,1\}$ into orbits of size 1 and 3. Note that $\{a\}$ forms an orbit of size 1 in $B\setminus \{0,1\}$ if and only if $1-\frac 1a=a$, if and only if $a$ is a root of the polynomial $x^2-x+1$. As a result, we have $b\equiv_3 2+z$, where $z$ is the number of roots in $B$ of $x^2-x+1$. Note that $0\le z \le 2$.
    
    If $b$ is divisible by 3 as in the statement, then $z=1$. Let $a \in B$ be a root of $x^2-x+1$. Consider the other root $a'$ of $x^2-x+1$, which is $a'=1-a$. Since $a \in B$, we have $\xi(a-1)=\xi(a)-1$, and since $\xi$ is multiplicative and $\xi(-1)=-1$, this implies that $\xi(a')=-\xi(a-1)=-\xi(a)+1=\xi(a'-1)+1$, so that in fact $a'\in B$. Because $z=1$ and not $z=2$, we must have $a'=a$. We obtain that $2a=1$, and from $a^2-a+1=0$ that $a=2$ and $\Fq$ has characteristic $3$, i.e.\ $q=3^e$ for some positive integer $e$. Next, from $2=-1\in B$, it follows that $1=\xi(1)=\xi(2-1)=\xi(2)-1=\xi(-1)-1=-2$, and so $R$ also has characteristic $3$.

    To show the statements concerning $m$, we apply Lemma~\ref{lema:-1}, where we note that $m=2\ell$. We get that $3^e\equiv_{2m} m+1$, so in particular $3$ is a unit in $\Z_{2m}^*$, and the subgroup of $\Z_{2m}^*$ generated by $3$ contains $m+1$.
\end{proof}

In particular, if $R=\F_3$, then $q$ has to be a power of $3$, compare \cite[Section 5.1]{ACDKZ2023}, of which the argument was the basis for Lemma~\ref{lema:3}. Since only prime fields $\Fq$ were considered in \cite{ACDKZ2023} (see Example~\ref{exa:2023}), they did not find any Neumaier graphs this way \cite[Remark 5.3]{ACDKZ2023}. However, Theorem~\ref{thm:norm} does give new examples using $R=\F_3$.

We are now ready to prove Theorem~\ref{thm:choices for R}.

\begin{proof}[Proof of Theorem~\ref{thm:choices for R}]
Let $\overline\Gamma(R,q,\xi)$ be a strictly Neumaier graph from Theorem~\ref{thm:general}. Let $\a$ be a primitive element of $\F_q$, and write $\rho=\xi(\a)$. Let $\ell$ be the smallest positive integer such that $\rho^\ell=-1_R$, and let $m=|\xi(\Fq^*)|\le 7$ be the order of $\rho$ in $R^*$. We distinguish two cases.
\begin{description}
    \item[$\Char(R)=2$:] Now $m=\ell$, and by Lemma~\ref{lema:ideal} we have that $\overline \Gamma(R,q,\xi)$ is isomorphic to a graph $\overline \Gamma(\Z[x]/I',q,\eta)$ for some ideal $I'$ of $\Z[x]$ containing the number $2$ and the polynomials $x^m+1$ and $x^i+x^j+1$ for some $i,j\in \{0,\dots,m-1\}$. Note that as rings $\Z[x]/I'\cong \F_2[x]/I$, with $I$ an ideal of $\F_2[x]$ containing $x^m+1$ and $x^i+x^j+1$ for some $0\le i \le j < m \le 7$. Now $I$ contains the polynomial $f_{i,j}^m=\gcd(x^m+1,x^i+x^j+1)$. If $f_{i,j}^m=1$, then $I=(1)$, contradicting $\Char(\F_2[x]/I)=2$ from Lemma~\ref{lema:ideal}. Otherwise, if $f_{i,j}^m$ is irreducible, then either $I=(1)$ or $I=(f_{i,j}^m)$, where in the first case we have a contradiction as above, and in the second case $\F_2[x]/I$ is a finite field. The only remaining case is where $i=2$, $j=4$, and $m=6$, where $f_{2,4}^6=x^4+x^2+1=(x^2+x+1)^2$. In this case either $I=(1)$ or $I=(x^2+x+1)$ (leading to the same conclusions as above), or $I=(f_{2,4}^6)$, in which case $\F_2[x]/I$ is isomorphic to the ring of dual numbers over $\F_4$.

    \item[$\Char(R)\ge 3$:] Now $\ell=m/2$, so $\ell\le 3$. We apply Lemma~\ref{lema:ideal}: $\overline \Gamma(R,q,\xi)$ is isomorphic to a graph $\overline{\Gamma}(\Z[x]/I,q,\eta)$, with $I$ an ideal containing $I_{ i, j}^\ell$ for some $1\le \ell \le 3$ and $0\le i \le j <2\ell$, where $I_{i, j}^\ell$ is the ideal generated by $x^\ell+1$ and $x^i+x^j+1$. Similar as above, if $I$ contains the number 1 or the number 2, then we obtain a contradiction with $\Char(\Z[x]/I)\ge 3$ from Lemma~\ref{lema:ideal}. If instead $I$ contains a monic linear polynomial, then $\Z[x]/I$ is isomorphic to a quotient ring of the ring of integers. Therefore, we only consider the cases where $I_{i,j}^\ell$ does not contain 1, 2 or a monic linear polynomial. In particular, we can exclude the case $\ell=1$, so that $m\in \{4,6\}$. Now by Lemma~\ref{lema:3}, it is impossible for $I$ to contain the number 3, so we can exclude those cases as well. There is only one case left with $1\le \ell \le 3$ and $0\le i \le j < 2\ell$, namely $\ell=3$, $i=2$, and $j=4$ (see Tables~\ref{tab:I^2_{i,j}} and \ref{tab:I^3_{i,j}}).

    \begin{table}[ht]
        \begin{center}
            \begin{tabular}{|c|cccc|}
            \hline
            \diagbox{$i$}{$j$} & 0 & 1 & 2 & 3\\
            \hline
            0 & 3 & $x+2$ & 1 & $x-2$\\
            1 & - &$x-2$ & 1 & 1\\
            2 & - & - & 1 & 1\\
            3 & - & - & - & $x+2$ \\
            \hline
            \end{tabular}
        \end{center}
        \caption{Certain key elements of ideals $I_{i,j}^2$, see the proof of Theorem~\ref{thm:choices for R}. The cases where $i>j$ are marked with a ``-''. }
        \label{tab:I^2_{i,j}}
    \end{table}

    \begin{table}[ht]
        \begin{center}
        \begin{tabular}{|c|cccccc|}
            \hline
            \diagbox{$i$}{$j$} & 0 & 1 & 2 & 3 & 4 & 5\\
            \hline
            0 & 3 & $x+2$ & $x+4$ & 1 & $x-2$ & $x+4$ \\
            1 & - & $x+4$ & 2 & 1 & 1 & 2  \\
            2 & - & - & $x-2$ & 1 & $\ast$ & 1 \\
            3 & - & - & - & 1 & 1 & 1 \\
            4 & - & - & - & - & $x+4$ & 2 \\
            5 & - & - & - & - & - & $x+2$ \\
            \hline
        \end{tabular}
        \end{center}
        \caption{Certain key elements of ideals $I_{i,j}^3$, see the proof of Theorem~\ref{thm:choices for R}. The cases where $i>j$ are marked with a ``-''. The interesting case where $i=2$ and $j=4$ is marked with a ``$\ast$''.}
        \label{tab:I^3_{i,j}}
    \end{table}

    In this case, we have $I^3_{2,4}=(x^2-x+1)$, and $\Z[x]/I^3_{2,4}\cong\Z[\zeta_6]$. Since $I^3_{2,4} \subseteq I$, now $\Z[x]/I$ is isomorphic to $\Z[\zeta_6]/J$ for some ideal $J$ of $\Z[\zeta_6]$. By Proposition~\ref{prop:eis} we have $J=(a\zeta_6 +b)$ for some $0\le a$ and $0\le b$. If $a>b$, then the conjugated ideal $\overline J=(a(1-\zeta_6)+b)=(a(\zeta_6-\zeta_6^2)+b\zeta_6)=(b\zeta_6+a)$, so we can assume that $a\le b$. Since $R$ is supposed to be finite, it has finite characteristic, and so $J\ne (0)$. Now by Proposition~\ref{prop:eis} we have $|\Z[\zeta_6]/J|=a^2+ab+b^2$. To complete the proof, suppose that $a\equiv_3 b$. Then $|\Z[x]/I|$ is divisible by $3$, so that by Lemma~\ref{lema:3}, the parameter $m$, which in this case is equal to the order of $x+I$ in $(\Z[x]/I)^*$, must be coprime to 3. Since $x^6-1 \in I^3_{2,4}$ it follows that $ m \mid 2$, and that $I$ contains the monic linear polynomial $x+1$ or $x-1$. This concludes the proof. \qedhere
    \end{description}
\end{proof}

	\subsection*{Acknowledgements}
    \par Aida Abiad is supported by the Dutch Research Council (NWO) through the grant VI.Vidi.213.085. Wouter Castryck is supported by the European Research Council under grant agreement 101020788 (ISOCRYPT), by the Research Council KU Leuven under grant C14/24/099 and by CyberSecurity Research Flanders with reference number VOEWICS02. Maarten De Boeck was partially supported by the Croatian Science Foundation under the project 5713. Thijs van Veluw was partially supported by the Fund Professor Frans Wuytack. 
    
    The authors thank Tim Beyne and Sergey Goryainov for inspiring discussions on the topic. Part of this work was done while the first author was visiting the Simons Institute for the Theory of Computing at UC Berkeley. 

	\subsection*{Statements and Declarations}
	The authors have no relevant financial or non-financial interests to disclose. Claude Opus 5 has been used to polish the writing of the proof of Lemma~\ref{lema:zeta6 infinite} and the corresponding section in the Preliminaries. 
    It also produced a step in the proof of Theorem~\ref{thm:weil_bound}(concretely, the first sentence of the last paragraph). Apart from this, AI has not been used for any mathematical reasoning.
    GPT-5.6 (OpenAI; accessed September 2026) has been used for assistance in proofreading. 


\newpage
\appendix
\section{Appendix} \label{app:weil_bound}

In Theorem~\ref{thm:weil_bound} we show how to use Lang--Weil type bounds to significantly lower the parameter $Q(d)$, see Section~\ref{sec:norm}.

\begin{thm} \label{thm:weil_bound}
  If $d \geq 3$ and $q > 15 (d-1)^{13/3}$ then $b_q^d > q$. In particular, if moreover $q$ is even or $d$ is odd, then $\overline{\Gamma}(\F_q,q^d,N_q^d)$ is a strictly Neumaier graph.
\end{thm}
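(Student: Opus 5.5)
We want to show that $b_q^d>q$, that is, the affine variety
\[
V:\; N_q^d(x)-N_q^d(x-1)=1
\]
has more than $q$ points over $\F_q$. In the coordinates of the proof of Lemma~\ref{lema:norm}, $V$ is the hypersurface $g(x_1,\dots,x_d)=0$ in $\mathbb A^d$ over $\F_q$ (with $r=1$). Here $g$ has degree $\le d-1$, and its top-degree part is $\partial f_1/\partial x_1$ up to sign.

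The plan is to show that $V$ is geometrically irreducible of dimension $d-1\ge 2$. Then a Lang--Weil type bound with explicit constants, namely \cite[Theorem 1.10]{Slavov}, gives a lower bound roughly of the form
\[
|V(\F_q)|\ge q^{d-1}-C(\delta)\,q^{d-3/2}
\]
with $\delta=d-1$. This exceeds $q$ once $q$ is large in terms of $d$, and the threshold $15(d-1)^{13/3}$ should come out of plugging Slavov's explicit constants into this comparison. Since Lemma~\ref{lema:norm} gives $q\mid b_q^d$, it is in fact enough to show $b_q^d>q$ by any margin.

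To analyse $V$ geometrically, we base-change to $\overline{\F_q}$. There the linear forms $\ell_i=x_1+x_2\a_2^{q^i}+\dots+x_d\a_d^{q^i}$ for $i=0,\dots,d-1$ are linearly independent, since the Moore/Dedekind determinant is nonzero. After a linear change of variables $y_i=\ell_i$, the norm becomes $\prod_i y_i$, and $x\mapsto x-1$ becomes $y\mapsto y-(1,\dots,1)$. So over $\overline{\F_q}$, $V$ is isomorphic to
\[
W:\; \prod_{i=1}^d y_i-\prod_{i=1}^d (y_i-1)=1.
\]
Expanding, the degree-$d$ terms cancel, and $W$ is defined by
\[
e_{d-1}(y)-e_{d-2}(y)+\dots+(-1)^{d}\bigl(e_0-\ldots\bigr)=\text{const},
\]
a symmetric polynomial of degree $d-1$ whose top part is $e_{d-1}(y)$.

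The key step, and the main obstacle, is proving that $W$ is absolutely irreducible. My first attempt is to look at the leading form $e_{d-1}(y_1,\dots,y_d)$. If it is irreducible for $d\ge3$, then any factorization of $h$ would induce a factorization of its top-degree part, so $h$ is irreducible. The form $e_{d-1}$ is irreducible in every characteristic for $d\ge 3$: it is linear in each variable, so any factorization must split the variables, $e_{d-1}=A(y_S)B(y_{S^c})$. Setting variables equal to zero rules this out, because $e_{d-1}$ contains the monomials $\prod_{j\ne i}y_j$ for every $i$, and two such monomials with $i\in S$, $i'\in S^c$ are incompatible with a product structure when $d\ge3$.

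There is one subtlety: the top-degree part of $h$ must really be $e_{d-1}$ with a nonzero coefficient. This holds because the coefficient is $\pm1$, coming from $-\prod(y_i-1)$, so the degree is exactly $d-1$ in all characteristics. For even $q$ the signs do not matter.

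Finally, we apply \cite[Theorem 1.10]{Slavov} to the degree-$(d-1)$ absolutely irreducible hypersurface $V\subset\mathbb A^d$, which gives $|V(\F_q)|>q$ under the stated bound. The "in particular" part then follows from Theorem~\ref{thm:norm}.
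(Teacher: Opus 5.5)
Your proposal follows the paper's proof essentially step for step. Both apply Slavov's explicit Lang--Weil bound with degree $d-1$, whose hypothesis is exactly $q>15(d-1)^{13/3}$, and both get absolute irreducibility by changing variables to $\prod_i y_i-\prod_i(y_i-1)-1$ and showing its leading form $e_{d-1}$ is irreducible. The steps you leave implicit are routine but should be stated: first, that $f_1=\prod_i\ell_i$ holds as a polynomial identity (the paper uses $q>d$ and a Lidl--Niederreiter lemma, and Frobenius-invariance of the coefficients also works); second, the numerical check $q^{d-1}-(d-2)(d-3)q^{d-3/2}-(d-2/5)q^{d-2}>q$.
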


\begin{proof}
We build on the proof of Lemma~\ref{lema:norm} and keep the same notation. If $q > 15 (d-1)^{13/3}$ then the number of zeroes of the polynomial $g$ is at least
\[
  q^{d-1} - (d-2)(d-3)q^{d-3/2} - (d - 2/5)q^{d-2}
\]
by~\cite[Theorem 1.10]{Slavov}, provided that $g$ is absolutely irreducible, i.e., irreducible as an element of $\overline{\F}_q[x_1, \ldots, x_d]$ where $\overline{\F}_q$ denotes an algebraic closure of $\F_q$. One verifies:
\[
  q^{d-1} - (d-2)(d-3)q^{d-3/2} - (d - 2/5)q^{d-2} - q > q^{d-1} - 3(d-1)^2q^{d - 3/2} > 0,
\]
using $d \geq 3$ and the lower bound on $q$.

So it is enough to show that $g$ is absolutely irreducible. In the proof of Lemma~\ref{lema:norm} we observed that the degree-$d$ polynomials $f_2, \ldots, f_d$ evaluate to $0$ at all elements of $\F_q^d$. Since $q > d$ this means that $f_2 = \ldots = f_d = 0$, see~\cite[Lemma 7.40]{LidlNiederreiter}. Therefore
\[ f_1 = f = \prod_{i=0}^{d-1}\left ( x_1 +\a_2^{q^i}x_2+\dots+\a_d^{q^i}x_d\right ), \]
really as an identity of polynomials. The $d$ linear factors on the right are polynomials over $\F_{q^d}$ and they are $\F_{q^d}$-linearly independent: to see this, one can assume that $\alpha_1 = 1, \alpha_2, \alpha_3 = \alpha_2^2, \ldots, \alpha_d = \alpha_2^{d-1}$ is a power basis and then the matrix
\[ \left( \alpha_j^{q^i} \right)_{1 \leq j \leq d, 0 \leq i \leq d - 1} = \left( (\alpha_2^{q^i} )^{j-1} \right)_{1 \leq j \leq d, 0 \leq i \leq d - 1} \] 
is the Vandermonde matrix associated with the roots of the minimal polynomial of $\alpha_2$ over $\F_q$, which are all distinct.

Thus, an $\F_{q^d}$-linear change of variables transforms the polynomial $f_1(x_1, x_2, \ldots, x_d)$ into $x_1x_2 \cdots x_d$ and under this transformation $f_1(x_1 - 1, x_2, \ldots, x_d)$ transforms into the polynomial $(x_1 - 1)(x_2 - 1) \cdots (x_d - 1)$. It is therefore sufficient to prove that the polynomial $x_1 x_2 \cdots x_d - (x_1 - 1)(x_2-1) \cdots (x_d - 1) - r$
is absolutely irreducible. For this it is enough that its homogeneous part of highest degree is absolutely irreducible; call this part $h$. Since $h$ has degree $1$ in $x_1$, any candidate-factorization $h = h_1 h_2$ must be such that one factor, say $h_1$, involves $x_1$ and the other factor does not. 
Then upon writing $h = h' x_1 + h''$ for polynomials $h', h''$ not involving $x_1$, we have $h_2 \mid h'$ and $h_2 \mid h''$. But one explicitly checks that $h'$ has no common factors with $h'' = x_2 x_3 \cdots x_d$, so $h_2$ must be a unit, i.e., the factorization is necessarily trivial.
\end{proof}


\end{document}